\title{Rate of convergence of a two-scale expansion for some ``weakly''
  stochastic homogenization problems}
\author{Claude Le Bris, Fr\'ed\'eric Legoll, Florian Thomines
\\ \\
{\footnotesize 
{\'E}cole Nationale des Ponts et Chauss{\'e}es, 6 et 8
    avenue Blaise Pascal, 77455 Marne-La-Vall{\'e}e Cedex 2}
\\ 
{\footnotesize 
and INRIA Rocquencourt, MICMAC team-project, 
Domaine de Voluceau, B.P. 105,}
\\ 
{\footnotesize
78153 Le Chesnay Cedex, France}
\\
{\footnotesize 
lebris@cermics.enpc.fr, \{legoll,thominef\}@lami.enpc.fr}
}
\date{\today}
\newcommand{\dps}{\displaystyle}
\newcommand{\esp}{\mathbb{E}}
\newcommand{\var}{\mathbb{V}\textrm{ar}}
\newcommand{\RR}{\mathbb R}
\newcommand{\ZZ}{\mathbb Z}
\newcommand{\PP}{\mathbb P}
\newcommand{\eps}{\varepsilon}
\newcommand{\wper}{w^0}
\newcommand{\betaa}{\overline{v}}
\newtheorem{theorem}{Theorem}
\newtheorem{prop}[theorem]{Proposition}
\newtheorem{lemme}[theorem]{Lemma}
\newtheorem{remark}[theorem]{Remark}
\begin{document}
\selectlanguage{english}
\maketitle

\begin{abstract}
We establish a rate of convergence of the two scale
expansion (in the sense of homogenization theory) of the solution to
a highly oscillatory elliptic partial differential equation
with random coefficients that are a perturbation of periodic
coefficients.
\end{abstract}

\section{Introduction and presentation of the main result}

This article focuses on establishing a rate of convergence of the two
scale expansion (in the sense of homogenization theory) of the solution
to a highly oscillatory partial differential equation with random
coefficients. We begin our exposition by briefly discussing the same
question in a deterministic setting, before turning to the stochastic
setting. 
 
Consider the highly oscillatory problem
\begin{equation}
\label{0}
\left\{
\begin{array}{l l}
\dps
 -\mbox{div}\left[ A_{per}\left(\frac{\cdot}{\varepsilon}\right) \nabla
u_0^{\varepsilon} \right] = f & \mbox{in }\mathcal{D}, 
\\
u_0^\varepsilon = 0 & \mbox{on }\partial \mathcal{D},
\end{array}
\right.
\end{equation}
where $\mathcal{D}$ is a regular bounded domain of $\RR^d$, $f \in
L^2(\mathcal{D})$, and $A_{per}$ is a $Q$-periodic elliptic bounded
matrix, with $Q=(-1/2,1/2)^d$. 
For simplicity, we manipulate henceforth {\em symmetric} matrices, but
the arguments carry over to non-symmetric matrices up to slight
modifications. 
It is well known (see e.g. the classical
textbooks~\cite{blp,cd,Jikov1994}, and also~\cite{Engquist-Souganidis}
for a general, numerically oriented presentation)
that $u_0^{\varepsilon}$ converges, weakly in
$H^1(\mathcal{D})$ and strongly in $L^2(\mathcal{D})$, to the solution
$u_0^\star$ to 
\begin{equation}
\label{eq:homog-0}
\left\{
\begin{array}{l l}
\dps
-\hbox{div}\left[A^\star_{per} \nabla u_0^\star \right] = f
& \mbox{ in } \mathcal{D},
\\
u_0^\star = 0 & \mbox{on }\partial \mathcal{D},
\end{array}
\right.
\end{equation}
where the homogenized matrix is given by
\begin{equation}
(A_{per}^\star)_{ij} 
=
\int_Q (e_i + \nabla \wper_{e_i}(y))^T A_{per}(y) (e_j + \nabla
\wper_{e_j}(y)) \, dy, 
\label{def:A0star}
\end{equation}
where, for any $p \in \RR^d$, $\wper_p$ is the unique (up to the
addition of a constant) solution 
to the corrector problem associated to the periodic matrix
$A_{per}$: 
\begin{equation}
\left\{
\begin{array}{ll}
\dps-\mbox{div} \left[A_{per}(p+\nabla \wper_p)\right] = 0, 
\\
\wper_p \mbox{ is $Q$-periodic.}
\end{array}
\right.
\label{PB:w0}
\end{equation}
The corrector function allows to compute the homogenized
matrix, and it also allows to obtain a convergence result in the $H^1$
strong norm. Indeed, in dimension $d>1$, under some regularity
assumptions recalled below, we have 
\begin{equation}
\label{eq:rate}
\left\| u_0^\varepsilon - \left[ 
u_0^\star + \varepsilon \sum_{i=1}^d 
\wper_{e_i} \left( \frac{\cdot}{\varepsilon} \right) 
\frac{\partial u_0^\star}{\partial x_i} 
\right] \right\|_{H^1(\mathcal{D})} \leq C \sqrt{\varepsilon}
\end{equation}
for a constant $C$ independent of $\varepsilon$ (in dimension $d=1$, the
difference is of order $\varepsilon$ rather than $\sqrt{\varepsilon}$). 

Note that $\dps v_0^\eps = 
u_0^\star + \varepsilon \sum_{i=1}^d 
\wper_{e_i} \left( \frac{\cdot}{\varepsilon} \right) \frac{\partial
  u_0^\star}{\partial x_i} 
$ is a function of order 1 in the $H^1$ norm. At first sight, one could
thus expect that the difference between $u_0^\eps$ and $v_0^\eps$ is of
order $\eps$, rather than $\sqrt{\varepsilon}$. This lower order (in
dimension $d>1$) is due 
to an inconsistency of the boundary conditions. Note indeed that, by
definition, $u_0^\eps = 0$ on $\partial {\cal D}$, which is not the case
of $v_0^\eps$. Note also 
that the (lower than expected) rate in~\eqref{eq:rate} is not specific to
the choice of homogeneous Dirichlet boundary conditions in~\eqref{0},
and also holds for Neumann 
boundary conditions, as stated in~\cite[p.~29]{Jikov1994} (see
also~\cite{Moskow-preprint}). 

The order of approximation improves if we ignore the
difference between $u_0^\eps$ and $v_0^\eps$ at the boundary of the
domain (see~\cite[Theorem 2.3]{allaire-amar}). Alternatively, one can
build functions, the so-called boundary layers, 
that correct $v_0^\eps$ in the neighboorhood of $\partial {\cal D}$, to
eventually improve the accuracy of the approximation of $u_0^\eps$ so
obtained, in the complete domain ${\cal D}$. We refer 
to~\cite{allaire-amar,Moskow1997} and to~\cite[Appendix B]{Efendiev2009}
(see also~\cite[Chap. 5]{these-arnaud}
for the study of the same question in a time-dependent, parabolic
setting). On another note, we refer to~\cite{Yurinski1980} for studies
on the rate of convergence of $u_0^\varepsilon$ to $u_0^\star$ in the
$L^\infty({\cal D})$ norm (see also~\cite{Engquist-Souganidis} and
references therein, and~\cite{Caffarelli} for extensions to some
nonlinear cases), and to~\cite{Moskow1997,Moskow-preprint} for similar
studies on the lowest eigenvalue $\lambda_0^\eps$ of the
operator $\dps L^\eps =
-\mbox{div}\left[ A_{per}\left(\frac{\cdot}{\varepsilon}\right) \nabla
\cdot \right]$.

\medskip

The result~\eqref{eq:rate} is interesting from the theoretical
viewpoint. It is also helpful for proving numerical analysis results. In
particular, this result is a key ingredient to prove error bounds for
the Multiscale Finite Element Method (MsFEM). This
numerical approach aims at approximating
the solution $u_0^\eps$ to the highly oscillatory problem~\eqref{0} (for
a small, but non vanishing small scale $\varepsilon$), and does so by
performing a variationnal approximation 
of~\eqref{0} using pre-computed basis functions that are {\em adapted} to the
problem. Consequently, the MsFEM approach yields an accurate approximation of
$u_0^\eps$ using only a limited number of degrees of freedom, in
contrast to a standard Finite Element Method approach. In
addition, the MsFEM approach is applicable in general situations, and is not
limited to the case when the highly oscillatory coefficient of the
equation reads 
$\dps A^\eps(x) \equiv A_{per}\left(\frac{x}{\varepsilon}\right)$ for a
fixed periodic matrix $A_{per}$. See~\cite{Efendiev2009} and references
therein. 
As described below, our motivation for this work stems from our
work~\cite{msfem-nous}, where we suggest a possible extension of the MsFEM approach to weakly
stochastic settings. Again, a key ingredient for proving error bounds on
the approach we propose there is to have a rate of
convergence of the type~\eqref{eq:rate}.

\medskip

Let us now turn to the stochastic case. As will be seen below, less
precise results are known than in the deterministic, periodic case. The
highly oscillatory problem reads 
\begin{equation}
\left\{
\begin{array}{ll}
\dps-\mbox{div} \left[A_\eta\left(\frac{\cdot}{\varepsilon},\omega\right)
\nabla u_\eta^\varepsilon(\cdot,\omega)
\right] = f & \mbox{ in } \mathcal{D}, 
\\
u_\eta^\varepsilon(\cdot,\omega) = 0 & \mbox{ on } \partial \mathcal{D},
\end{array}
\right.
\label{PB:stoch-a}
\end{equation}
where the matrix $A_\eta$ is now a stationary symmetric matrix, uniformly
elliptic and bounded (see~\eqref{eq:stationnarite-disc} below for a precise
definition of stationarity, which is the common assumption in stochastic
homogenization). The role 
of the parameter $\eta$ will be 
made precise in~\eqref{def:A-eta-1} below. It can momentarily be ignored.
Again, as in the periodic case, it is well known (see for
instance~\cite{Jikov1994}) that $u_\eta^{\varepsilon}$ converges, almost
surely,
weakly in $H^1(\mathcal{D})$ and strongly in $L^2(\mathcal{D})$, to
$u_\eta^\star$, solution to the homogenized equation
$$
\left\{
\begin{array}{l l}
\dps-\mbox{div}\left[A_\eta^\star \nabla u_\eta^\star \right] =
f & \mbox{ in } \mathcal{D}, \\
u_\eta^\star = 0 & \mbox{ on } \partial \mathcal{D},
\end{array}
\right.
$$
where the homogenized matrix is given by
$$
\left(A_\eta^\star\right)_{ij}=\esp\left(\int_Q (e_i + \nabla
  w^\eta_{e_i}(y,\cdot))^T A_\eta(y,\cdot) (e_j + \nabla w^\eta_{e_j}(y,\cdot))
  \,dy\right), 
$$
where, for any $p \in \RR^d$, $w^\eta_p$ is the unique (up to the
addition of a random constant) 
solution to the stochastic corrector problem 
$$
\left\{
\begin{array}{l}
\dps-\mbox{div} \left[A_\eta\left(\cdot,\omega\right)
(p + \nabla w^\eta_p(\cdot,\omega))
\right] = 0 \mbox{ in } \RR^d, 
\\ \noalign{\vskip 3pt}
\nabla w^\eta_p \mbox{ is stationary in the sense
  of~\eqref{eq:stationnarite-disc} below,} 
\\ \noalign{\vskip 3pt}
\dps \esp \left( \int_Q \nabla w^\eta_p(y,\cdot) \, dy \right)= 0.
\end{array}
\right.
$$
As in the periodic case, the corrector function $w^\eta_p$ allows to obtain a
convergence result in the $H^1$ norm
(see~\cite[Theorem~3]{Papanicolaou-Varadhan}): 
\begin{equation}
\label{cv:sto}
\esp \left[ \left\| u_\eta^\varepsilon(\cdot,\omega) - \left[ 
u_\eta^\star + \varepsilon \sum_{i=1}^d 
w^\eta_{e_i} \left( \frac{\cdot}{\varepsilon},\omega \right) 
\frac{\partial u_\eta^\star}{\partial x_i} 
\right] \right\|^2_{H^1(\mathcal{D})} \right]
\quad
\text{converges to 0 as $\varepsilon \to 0$}.
\end{equation}
However, in contrast to the periodic case, the rate of convergence is
generally not known, in dimensions higher than one. In the
one-dimensional case, this question has been addressed
in~\cite{bourgeat_residu,bal_residu}. It is shown there that the rate can
be arbitrary small, depending on the rate with which the correlations of
the random coefficient in~\eqref{PB:stoch-a} vanish. The only assumptions of
stationarity and ergodicity do not allow for a
precise rate. See also~\cite{residu-bll}
for the study of a similar question for a variant of stochastic
homogenization, again in the one-dimensional case,
and~\cite{bal_correcteur_multiD} for results in the multi-dimensional
case, for a different equation. 

\medskip

The aim of this article is to show that, in a {\em weakly} stochastic
case (the precise sense of which is given below), a convergence rate
for~\eqref{cv:sto} can be obtained (in the 
same spirit as~\eqref{eq:rate}). As in the deterministic case, this
result is interesting from the theoretical viewpoint, and somewhat
complements the one-dimensional results
of~\cite{bourgeat_residu,bal_residu,residu-bll}. It is also useful from
a numerical analysis viewpoint. In~\cite{msfem-nous}, we propose an
extension of the MsFEM approach to weakly stochastic settings, and we
use there the homogenization result that we prove in this work (see
Theorem~\ref{exp2scaleueta} below) to obtain error bounds
(see~\cite[Theorem 10]{msfem-nous}). 

\medskip
  
Before presenting our result, let us briefly recall the basic setting of stochastic
homogenization. Let $(\Omega, {\mathcal F}, \PP)$ be a probability
space. For a random variable $X\in L^1(\Omega, d\PP)$, we denote by
$\esp(X) = \int_\Omega X(\omega) d\PP(\omega)$ its expectation value. We
assume that the group $(\ZZ^d, +)$ acts on $\Omega$. We denote by
$(\tau_k)_{k\in \ZZ^d}$ this action, and assume that it preserves the
measure $\PP$, i.e. 
$$
  \forall k\in \ZZ^d, \quad \forall A \in {\cal F}, \quad \PP(\tau_k A)
  = \PP(A).
$$
We assume that $\tau$ is {\em ergodic}, that is,
$$
\forall A \in {\mathcal F}, \quad \left(\forall k \in \ZZ^d, \quad \tau_k A = A
    \right) \Rightarrow (\PP(A) = 0 \quad\mbox{or}\quad 1).
$$
We define the following notion of stationarity: any
$F\in L^1_{\rm loc}\left(\RR^d, L^1(\Omega)\right)$ is said to be
{\em stationary} if
\begin{equation}
  \label{eq:stationnarite-disc}
  \forall k\in \ZZ^d, \quad F(x+k, \omega) = F(x,\tau_k\omega)
  \mbox{ almost everywhere, almost surely}.
\end{equation}
Note that we have chosen to present the theory in 
a \emph{discrete} stationary setting, which is more
appropriate for our specific purpose, which is to consider a setting
close to {\em periodic} homogenization.
Random homogenization is more often presented in the \emph{continuous}
stationary setting. This is only a matter of small modifications. We
refer to the bibliography for the latter.  

\medskip

We now precisely describe the weakly
stochastic setting we consider. We assume that the matrix $A_\eta$
in~\eqref{PB:stoch-a} reads
\begin{equation}
\label{def:A-eta-1}
A_\eta(x,\omega)
=
A_{per}(x)+ \eta A_1(x,\omega),
\end{equation}
where $\eta \in \RR$ is {\em small} deterministic parameter,
$A_{per}$ is a symmetric uniformy elliptic bounded $Q$-periodic matrix, and
$A_1$ is a symmetric matrix, stationary in the sense
of~\eqref{eq:stationnarite-disc}, and bounded: $|A_1(x,\omega)|\leq C$
almost everywhere in $\RR^d$, almost surely. We also assume that
$A_\eta$ is uniformly elliptic and bounded, in the sense that, for all $\eta \in \RR$,
$$
A_\eta(\cdot,\omega) \in (L^\infty(\RR^d))^{d \times d}
\quad \text{a.s.}
$$
and there
exists $c>0$ such that
$$
\forall \xi \in \RR^d, \quad
\xi^T A_\eta(x,\omega) \xi \geq c \ \xi^T \xi
\quad \text{a.s., a.e. on $\RR^d$.}
$$
We furthermore assume that $A_1$ is of the form
\begin{equation}
\label{struc:A1}
A_1(x,\omega) = \sum\limits_{k \in \ZZ^d} \mathbf{1}_{Q+k}(x)
X_k(\omega) \, B_{per}(x),
\end{equation}
where $\left(X_k(\omega)\right)_{k\in \ZZ^d}$ is a sequence of
i.i.d. scalar random variables such that 
$$
\exists C, \, \forall k \in \ZZ^d, \quad 
|X_k(\omega)| \leq C \quad \text{ almost surely,}
$$
and $B_{per} \in \left( L^\infty(\RR^d) \right)^{d \times d}$ is a
$Q$-periodic matrix. Finally, we assume that
\begin{eqnarray}
\label{hyp:a_holder}
\text{$A_{per}$ is H\"older continuous},
\\
\label{hyp:b_holder}
\text{$B_{per}$ is H\"older continuous}.
\end{eqnarray}
As pointed out above, the symmetry assumption is not essential, and
our arguments below carry over to non-symmetric matrices up to slight
modifications. Likewise, the assumption~\eqref{struc:A1} can be
relaxed. What is important in~\eqref{struc:A1} is that $A_1$ is a sum of
{\em direct products} of a function depending on $x$ with a random
variable, depending only on $\omega$. 

In contrast, it is difficult
to weaken assumptions~\eqref{hyp:a_holder} and~\eqref{hyp:b_holder}, 
which are used to obtain some regularity on the correctors $\wper_p$ and
$\psi_p$, solutions to~\eqref{PB:w0} and~\eqref{eq:psi_p} below, respectively.
We indeed recall that, under assumption~\eqref{hyp:a_holder}, we have 
$\wper_p \in W^{1,\infty}(\RR^d)$ for any $p \in \RR^d$ (see
e.g.~\cite[Theorem 8.22 and Corollary~8.36]{gilbarg-trudinger}), and
similarly for $\psi_p$, under assumption~\eqref{hyp:b_holder}. In the
sequel, we will use the fact that $\wper_p$ and $\psi_p$ belong to
$W^{1,\infty}(\RR^d)$, which is a standard assumption when
proving convergence rates of two-scale expansions (see
e.g.~\cite[p.~28]{Jikov1994}). 

We also note that, following~\cite{abl-green}, the
assumption~\eqref{hyp:a_holder} is useful to 
characterize the asymptotic behavior of the Green function associated to
the operator $L = - \hbox{div}\left[A_{per} \nabla \cdot \right]$ on the
domain $\mathcal{D}/\eps$ (with homogeneous Dirichlet boundary
conditions). This Green function will be used in the sequel.

\begin{remark}
There are several ways to formalize a notion of "weakly" stochastic
setting, and~\eqref{def:A-eta-1} is only {\em one} of them. 
We refer to~\cite{enumath,Bris-sg} for other examples.
\end{remark}
 
\medskip

Our main result is the following. 
\begin{theorem}
\label{exp2scaleueta}
Assume that the dimension $d$ is strictly higher than $1$. 
Let $u_\eta^\varepsilon$ be the solution to~\eqref{PB:stoch-a}, and
assume that $A_\eta$
satisfies~\eqref{def:A-eta-1}-\eqref{struc:A1}-\eqref{hyp:a_holder}-\eqref{hyp:b_holder}.
Let 
$A^\star_{per}$, $\wper_p$ and $u_0^\star$ be defined
by~\eqref{def:A0star}, \eqref{PB:w0} and~\eqref{eq:homog-0}.
Let $\overline{B} \in \RR^{d \times d}$ and $\overline{u}_1^\star \in
H^1_0({\mathcal D})$ be defined
by
\begin{equation}
\label{def:overlineB}
\forall 1 \leq i,j \leq d, \quad
\overline{B}_{ij} = 
\int_Q (e_i + \nabla \wper_{e_i})^T B_{per} (e_j + \nabla \wper_{e_j})
\end{equation}
and 
\begin{equation}
\label{PB:u1barstar}
\left\{ 
\begin{array}{l l}
-\hbox{div}\left[A^\star_{per} \nabla \overline{u}_1^\star \right]
= 
\hbox{div}\left[\overline{B} \nabla u_0^\star \right] 
& \text{ in }\mathcal{D}, 
\\
\overline{u}_1^\star=0 & \text{ on }\partial\mathcal{D}.
\end{array}
\right.
\end{equation}
Introduce $v^\varepsilon_\eta$ defined by
\begin{multline}
\label{expueta}
v^\varepsilon_\eta(\cdot,\omega) = u_0^\star + \eta \esp(X_0)
\overline{u}_1^\star + \varepsilon \sum\limits_{p=1}^d
\left[
\wper_{e_p}\left(\frac{\cdot}{\varepsilon}\right) (\partial_p u_0^\star
  + \eta \esp(X_0) \partial_p \overline{u}_1^\star) \right. 
\\
\left.+ \eta \esp(X_0) \psi_{e_p}\left(\frac{\cdot}{\varepsilon}\right)
  \partial_p u_0^\star + \eta \sum\limits_{k \in I_\varepsilon}
  (X_k(\omega)-\esp(X_0)) \
  \chi_{e_p}\left(\frac{\cdot}{\varepsilon}-k\right) \partial_p u_0^\star \right], 
\end{multline}
where $\partial_p u_0^\star$ denotes the partial derivative 
$\dps \frac{\partial u_0^\star}{\partial x_p}$,
$$
I_\eps = \left\{ k \in \ZZ^d \text{ such that } \varepsilon(Q+k)
  \cap \mathcal{D} \ne \emptyset \right\}, 
$$
and where, for any $p \in \RR^d$, $\psi_p$ is the solution (unique up to the
addition of a constant) to
\begin{equation}
\label{eq:psi_p}
\left\{ 
\begin{array}{l}
-\hbox{div}\left[A_{per} \nabla \psi_p \right]
= 
\hbox{div}\left[B_{per} \left(p + \nabla \wper_p\right) \right],
\\
\psi_p \mbox{ is $Q$-periodic},
\end{array}
\right.
\end{equation} 
and $\chi_p$ is the unique solution to
\begin{equation}
\label{eq:pb_chip}
 \left\{ 
\begin{array}{l l}
-\hbox{div}\left[A_{per} \nabla \chi_{p} \right]=
\hbox{div}\left[\mathbf{1}_Q B_{per}(p + \nabla \wper_p)\right] & \text{
  in $\RR^d$}, 
\\
\chi_{p} \in L^2_{loc}(\RR^d), \quad 
\nabla \chi_{p} \in \left(L^2(\RR^d)\right)^d,
\\
\dps \lim_{|x| \to \infty} \chi_p(x) = 0.
\end{array}
\right.
\end{equation}
We assume that $u_0^\star \in
W^{2,\infty}(\mathcal{D})$ and
$\overline{u}_1^\star \in W^{2,\infty}(\mathcal{D})$.
Then
\begin{equation}
\sqrt{ \esp \left[ 
\|u^\varepsilon_\eta-v^\varepsilon_\eta\|^2_{H^1(\mathcal{D})} 
\right] }
\leq C \left(\sqrt{\varepsilon} + \eta \sqrt{\varepsilon
    \ln(1/\varepsilon)} + \eta^2 \right),
\label{restueta}
\end{equation}
where $C$ is a constant independent of $\varepsilon$ and $\eta$.
\end{theorem}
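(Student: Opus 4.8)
The plan is to run the classical energy/$L^2$-flux method of homogenization, organised as a perturbative expansion in $\eta$, and to reduce everything to the control of an $L^2$ flux residual plus a boundary-layer correction. Write $R_\eta^\eps = u_\eta^\eps - v_\eta^\eps$. Since $u_\eta^\eps$ solves the weak form of~\eqref{PB:stoch-a} while $v_\eta^\eps$ does not vanish on $\partial\mathcal D$ (only the slow parts $u_0^\star$ and $\overline u_1^\star$ do, not the $\eps$-correctors), I would first introduce a cut-off $\theta_\eps$ equal to $1$ away from $\partial\mathcal D$ and to $0$ in a layer of width $\delta$ around it, and let $v_\eta^{\eps,\theta}$ denote~\eqref{expueta} with every oscillating corrector multiplied by $\theta_\eps$. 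Then $u_\eta^\eps - v_\eta^{\eps,\theta} \in H^1_0(\mathcal D)$ is an admissible test function, whereas $v_\eta^{\eps,\theta} - v_\eta^\eps$ is supported in the boundary layer and is estimated on its own; the eventual choice $\delta\sim\eps$ is what produces the $\sqrt\eps$ scaling.

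The core computation is to insert $v_\eta^{\eps,\theta}$ into $-\mathrm{div}[A_\eta(\cdot/\eps,\omega)\nabla\,\cdot\,]$, expand $A_\eta = A_{per} + \eta A_1$ with $A_1$ as in~\eqref{struc:A1}, and sort the residual by powers of $\eta$. At order $\eta^0$ one recovers exactly the deterministic periodic expansion: the corrector equation~\eqref{PB:w0} kills the $O(1/\eps)$ term, the homogenised equation~\eqref{eq:homog-0} together with~\eqref{def:A0star} kills the $O(1)$ term, and what is left is an $O(\eps)$ divergence-form flux plus the periodic boundary layer, i.e.\ the familiar $\sqrt\eps$. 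At order $\eta^1$ I would split each $X_k = \esp(X_0) + (X_k-\esp(X_0))$. The mean part is a periodic-type source, absorbed by the periodic corrector $\psi_{e_p}$ of~\eqref{eq:psi_p} and the macroscopic corrector $\overline u_1^\star$ of~\eqref{PB:u1barstar} (whose data involve $\overline B$ from~\eqref{def:overlineB}), leaving again an $O(\eps)$ flux and a boundary layer of size $\eta\sqrt\eps$. The fluctuating part is cancelled cell by cell by the localised corrector $\chi_{e_p}$ of~\eqref{eq:pb_chip}: the $Q$-periodicity of $A_{per}$, $B_{per}$ and $\wper_p$ lets $\chi_{e_p}(\cdot/\eps-k)$ annihilate the $O(1/\eps)$ fluctuating source issued from the cell $Q+k$.

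It remains to estimate the \emph{stochastic} residual surviving this cancellation, and this is where the real difficulty — and the $\eta\sqrt{\eps\ln(1/\eps)}$ term — sits. Every surviving stochastic contribution is a sum $\eta\sum_{k\in I_\eps}(X_k-\esp(X_0))\,G_k(\cdot,\omega)$ with i.i.d.\ centred prefactors, so taking its second moment kills all cross terms and leaves $\eta^2\,\var(X_0)\sum_k \|G_k\|_{L^2}^2$. To make these sums summable I would invoke the decay of the localised corrector, $|\chi_p(x)|\lesssim |x|^{1-d}$ and $|\nabla\chi_p(x)|\lesssim |x|^{-d}$, which comes from the asymptotics of the Green function of $L=-\mathrm{div}[A_{per}\nabla\,\cdot\,]$ — precisely the place where the H\"older regularity~\eqref{hyp:a_holder} enters, following~\cite{abl-green}. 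The worst term is the one where $\nabla\theta_\eps$ (of size $\delta^{-1}$ on a set of measure $\delta$) hits the corrector $\eps\,\chi_{e_p}(\cdot/\eps-k)$ itself: its second moment is bounded by $\eta^2\eps^2\delta^{-1}\sum_k |\chi_p(\cdot/\eps-k)|^2$, and summing $|\chi_p|^2\sim|x|^{2(1-d)}$ over the $O(\eps^{-d})$ cells out to distance $1/\eps$ yields a factor $\ln(1/\eps)$ when $d=2$ (and a bounded factor for $d\ge 3$, so the stated bound is uniform); with $\delta\sim\eps$ this is exactly $\eta\sqrt{\eps\ln(1/\eps)}$.

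Finally, the uncorrected second-order piece $-\eta^2\,\mathrm{div}[A_1(\cdot/\eps,\omega)\nabla v^{(1)}]$, where $v^{(1)}$ is the part of~\eqref{expueta} proportional to $\eta$, is left untouched; its flux $\eta^2 A_1\nabla v^{(1)}$ is bounded in $L^2$ in expectation because $A_1$ is bounded and $\esp\|\nabla v^{(1)}\|_{L^2}^2 = O(1)$ (again by $\nabla\chi_p\in L^2$ and independence), so it contributes $O(\eta^2)$. Collecting the three sources, testing the equation satisfied by $u_\eta^\eps - v_\eta^{\eps,\theta}$ against itself and using uniform ellipticity of $A_\eta$ gives $\esp\|u_\eta^\eps - v_\eta^{\eps,\theta}\|_{H^1}^2 \lesssim (\sqrt\eps + \eta\sqrt{\eps\ln(1/\eps)} + \eta^2)^2$; adding the directly-estimated boundary-layer difference $v_\eta^{\eps,\theta}-v_\eta^\eps$ produces~\eqref{restueta}. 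I expect the main obstacle to be the sharp Green-function decay of $\chi_p$ together with the second-moment bookkeeping in the boundary layer, since it is there, and only there, that the logarithmic loss appears; the deterministic $\eta^0$ part and the crude $\eta^2$ part are comparatively routine.
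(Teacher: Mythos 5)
Your architecture is genuinely the same as the paper's in all structural respects---a decomposition in powers of $\eta$, the three correctors $\wper_p$, $\psi_p$, $\chi_p$, a cut-off at distance $\eps$ from $\partial\mathcal{D}$, independence of the $X_k$ to reduce second moments to diagonal sums, and the Green-function decay of $\chi_p$ under~\eqref{hyp:a_holder}---but there is a genuine gap in your treatment of the fluctuating part at order $\eta$. After $\chi_{e_p}(\cdot/\eps-k)$ cancels the leading $O(1/\eps)$ source issued from the cell $Q+k$, the surviving contribution is \emph{not} a small flux: it is $\sum_k (X_k-\esp(X_0))\,Z_k(\cdot/\eps)\cdot\nabla\partial_p u_0^\star$, where $Z_k(y)=\mathbf{1}_{Q+k}(y)B_{per}(y)\left(e_p+\nabla\wper_{e_p}(y)\right)+A_{per}(y)\nabla\chi_{e_p}(y-k)$ is divergence-free but of size $O(1)$ in $L^2(\RR^d)$. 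Hence $\|Z_k(\cdot/\eps)\|_{L^2(\mathcal{D})}^2\sim\eps^d$, and the second-moment/$H^{-1}$ bookkeeping you propose yields $\var(X_0)\sum_{k\in I_\eps}\|Z_k(\cdot/\eps)\,\partial_p u_0^\star\|_{L^2(\mathcal{D})}^2=O(1)$: no smallness at all, so your global energy estimate only recovers $O(\eta)$ for this term, far from $O(\eta\sqrt{\eps\ln(1/\eps)})$. Unlike the periodic remainders, $Z_k$ carries no mean-zero periodic structure, so the vector-potential trick of Lemma~\ref{lem3} does not gain a factor $\eps$ here. This is exactly why the paper decomposes $u_1^\eps$ into the per-cell Dirichlet problems $\phi_k^\eps$ of~\eqref{PB:phikeps}: it first proves the Green-function-based estimate $\|\phi_k^\eps\|_{L^\infty(\mathcal{D})}\le C\eps$ (and the analogous bound for the truncated ansatz), and then handles the $Z_k$ term by $L^\infty$--$L^1$ duality, using $\|Z_k(\cdot/\eps)\|_{L^1(\mathcal{D})}\le C\eps^d\left[1+\ln(1/\eps)\right]$, a consequence of $|\nabla\chi_p(x)|\le C|x|^{-d}$. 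Your one-shot energy method never produces an $L^\infty$ bound on the error, so this device is unavailable as you have set things up; supplying it (or an equivalent) is the missing idea, not a routine detail.

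Relatedly, you misattribute the logarithm. Your boundary-layer term $\nabla\theta_\eps\cdot\eps\chi_{e_p}(\cdot/\eps-k)$ does reproduce the paper's truncation estimate (a factor $\eps\ln(1/\eps)$ when $d=2$ and $\eps$ when $d>2$, since $\chi_p\in L^2(\RR^d)$ fails precisely at $d=2$), but the dominant source of $\ln(1/\eps)$---and the \emph{only} one when $d\ge 3$---is the $L^1$ norm of $\nabla\chi_{e_p}$ in the $Z_k$ term just described: $\int_1^{1/\eps} r^{d-1}\,r^{-d}\,dr=\ln(1/\eps)$ in every dimension $d\ge 2$. So your claim that the factor is bounded for $d\ge 3$ is not established by the mechanism you describe, and in any case the term it should come from is the very one your second-moment scheme fails to control. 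The remaining ingredients of your plan (the order-$\eta^0$ periodic estimate, the mean part via $\psi_p$ and $\overline{u}_1^\star$ with the divergence-free mean-zero fields handled as in Lemma~\ref{lem3}, and the $O(\eta^2)$ bound on the residual flux $\eta^2 A_1\nabla v^{(1)}$, which is a legitimate mild variant of the paper's a.s.\ bound on $r_\eta^\eps$) are sound.
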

We wish to point out that the assumption $u_0^\star \in
W^{2,\infty}(\mathcal{D})$ (and subsequently
$\overline{u}_1^\star \in W^{2,\infty}(\mathcal{D})$) is
a standard assumption when proving
convergence rates of two-scale expansions (see e.g.~\cite[Theorem
2.1]{allaire-amar} and~\cite[p.~28]{Jikov1994}).
Note that, in view of~\eqref{eq:homog-0}, this assumption implies that
the right hand 
side $f$ in~\eqref{PB:stoch-a} belongs to $L^\infty(\mathcal{D})$. We
also note that $v^\varepsilon_\eta$ is not uniquely defined, since
$\wper_p$ and $\psi_p$ are only defined up to an additive constant. However,
adding a constant to any of these functions does not change the order of
convergence in~\eqref{restueta} with respect to $\varepsilon$ and $\eta$,
but only the constant $C$. Choosing the best constants in $\wper_p$ and
$\psi_p$ is hence irrelevant here, although it is an important matter
from the practical viewpoint. 
Lastly, the existence and uniqueness of a function $\chi_p$
satisfying~\eqref{eq:pb_chip} is shown in Lemma~\ref{lem4}
below, in dimension $d>1$. In dimension $d=1$, the boundary conditions
of~\eqref{eq:pb_chip} need to be modified for this problem to have a
solution. The one-dimensional version of Theorem~\ref{exp2scaleueta}
is as follows:

\begin{theorem}
\label{exp2scaleueta-1D}
Assume that the dimension $d$ is equal to one.
Let $u_\eta^\varepsilon$ be the solution to~\eqref{PB:stoch-a} in the
domain ${\cal D}$ with $f \in L^2({\cal D})$, and assume that $A_\eta$
satisfies~\eqref{def:A-eta-1}-\eqref{struc:A1}.
Let $v^\varepsilon_\eta$ be defined by~\eqref{expueta}, where the
definition~\eqref{eq:pb_chip} of the function $\chi$ is replaced by
$$
\left\{ 
\begin{array}{l l}
-\left[ A_{per} \chi' \right]' =
\left[\mathbf{1}_{(0,1)} B_{per}(1 + (\wper)') \right]' & \text{
  in $\RR$,} 
\\
\chi \in L^2_{loc}(\RR), \quad 
\chi' \in L^2(\RR),
\end{array}
\right.
$$
where $\wper$ solves~\eqref{PB:w0}. Then
\begin{eqnarray}
\dps
\sqrt{ \esp \left[ 
\|u^\varepsilon_\eta-v^\varepsilon_\eta\|^2_{H^1(\mathcal{D})} 
\right] }
&\leq &
\dps
C \left( \varepsilon + \eta \sqrt{\varepsilon} + \eta^2 \right),
\label{restueta-1d-H1}
\\
\dps
\sqrt{ \esp \left[ 
\|u^\varepsilon_\eta-v^\varepsilon_\eta\|^2_{L^\infty(\mathcal{D})} 
\right] }
&\leq &
\dps
C \left( \varepsilon + \eta \sqrt{\varepsilon} + \eta^2 \right),
\label{restueta-1d-Linfty}
\end{eqnarray}
where $C$ is a constant independent of $\varepsilon$ and $\eta$.
\end{theorem}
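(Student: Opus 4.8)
The plan is to exploit the explicit solvability of one-dimensional elliptic problems. Taking $\mathcal D=(0,1)$ without loss of generality and writing $a^\eps_\eta(x,\omega)=A_\eta(x/\eps,\omega)=A_{per}(x/\eps)+\eta\,X_{k(x)}(\omega)\,B_{per}(x/\eps)$, where $k(x)$ is the index of the $\eps$-cell containing $x$, one integration of~\eqref{PB:stoch-a} gives
\begin{equation*}
(u^\eps_\eta)'(x,\omega)=\frac{K^\eps_\eta(\omega)-F(x)}{a^\eps_\eta(x,\omega)},\qquad F(x)=\int_0^x f,
\end{equation*}
the flux constant $K^\eps_\eta(\omega)$ being fixed by $\int_0^1 (u^\eps_\eta)'=0$. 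The correctors are equally explicit: $1+(\wper)'=A^\star_{per}/A_{per}$, a similar primitive formula holds for $\psi$, and integrating the one-dimensional version of~\eqref{eq:pb_chip} forces the integration constant to vanish (otherwise $\chi'\notin L^2(\RR)$), so that $\chi'=-\mathbf 1_{(0,1)}\,A^\star_{per}B_{per}/A_{per}^2$ is supported in a single cell while $\chi$ is bounded and merely locally constant outside $(0,1)$. Setting $R=u^\eps_\eta-v^\eps_\eta$, the one-dimensional bounds $\|R\|_{L^\infty}\le |R(0)|+\|R'\|_{L^1}\le |R(0)|+C\|R'\|_{L^2}$ and $\|R\|_{L^2}\le\|R\|_{L^\infty}$ reduce both~\eqref{restueta-1d-H1} and~\eqref{restueta-1d-Linfty} to a single estimate of $\esp\big[\,|R(0)|^2+\|R'\|^2_{L^2(\mathcal D)}\big]$; since $R(0)=-v^\eps_\eta(0,\omega)$ and the corrector terms carry a prefactor $\eps$, the value $R(0)$ is readily seen to be $O(\eps)$ in $L^2(\Omega)$.

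It then remains to estimate $R'=(u^\eps_\eta)'-(v^\eps_\eta)'$, which I would organize by powers of $\eta$, writing $1/a^\eps_\eta=A_{per}^{-1}-\eta\,X_{k(\cdot)}B_{per}/A_{per}^2+O(\eta^2)$ and $K^\eps_\eta=K^\eps_0+\eta\,\partial_\eta K^\eps_\eta|_{\eta=0}+O(\eta^2)$, all $O(\eta^2)$ remainders being controlled crudely by boundedness of the $X_k$ and ellipticity (this produces the $\eta^2$ term in~\eqref{restueta-1d-H1}). At order $\eta^0$ one recovers the classical periodic two-scale error: using $1+(\wper)'=A^\star_{per}/A_{per}$ and $(u_0^\star)'=(K^\star_0-F)/A^\star_{per}$ (with $K^\star_0=\int_0^1 F$), the order-$\eta^0$ part of $R'$ equals $(K^\eps_0-K^\star_0)/A_{per}(\cdot/\eps)-\eps\,\wper(\cdot/\eps)(u_0^\star)''$. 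An integration by parts against the mean-zero primitive of the periodic oscillation of $1/A_{per}$ gives $K^\eps_0-K^\star_0=O(\eps)$ using only $f\in L^2$, and the second term is $O(\eps)$ because $(u_0^\star)''=-f/A^\star_{per}\in L^2$; this yields the $O(\eps)$ contribution. The purely deterministic (``mean'') part of the order-$\eta^1$ terms is treated identically, $\wper$ being now accompanied by $\psi$ and by the coarse correction $\overline u_1^\star$, and contributes $O(\eta\eps)$.

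The main obstacle is the fluctuation part of the order-$\eta^1$ terms, which is what produces the genuinely stochastic rate $\eta\sqrt\eps$. The leading derivative of the $\chi$-term in~\eqref{expueta} is $\eta\,(X_{k(x)}-\esp(X_0))\,\chi'(x/\eps-k(x))\,(u_0^\star)'$, and since $\chi'(u_0^\star)'=-(K^\star_0-F)B_{per}/A_{per}^2$ on each cell, this cancels exactly the fluctuation $-\eta\,(K^\eps_0-F)(X_{k(x)}-\esp(X_0))B_{per}/A_{per}^2$ of $(u^\eps_\eta)'$, up to the already-estimated $O(\eps)$ replacement of $K^\eps_0$ by $K^\star_0$. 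What survives are two terms that I would control through the i.i.d.\ structure so that variances add. First, the fluctuation of the flux constant, $\eta\,\kappa^\eps(\omega)/A_{per}(\cdot/\eps)$, where $\kappa^\eps$ denotes the fluctuation part of $\partial_\eta K^\eps_\eta|_{\eta=0}$: up to a bounded deterministic denominator, $\kappa^\eps=\sum_k (X_k-\esp(X_0))\,c^\eps_k$ with $c^\eps_k=\int_{\eps(Q+k)}(K^\eps_0-F)B_{per}/A_{per}^2=O(\eps)$, a sum of $O(\eps^{-1})$ independent mean-zero terms, hence of variance $\var(X_0)\sum_k (c^\eps_k)^2=O(\eps)$. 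Second, the subleading $\eps\,\eta\sum_k (X_k-\esp(X_0))\chi(x/\eps-k)(u_0^\star)''$, whose $L^2(\Omega\times\mathcal D)$ norm is again $O(\eta\sqrt\eps)$ because $\chi$ is bounded but does \emph{not} decay, so that $\esp\big|\sum_{k\in I_\eps}(X_k-\esp(X_0))\chi(x/\eps-k)\big|^2=\var(X_0)\sum_{k\in I_\eps}\chi(x/\eps-k)^2=O(\eps^{-1})$. Squaring and adding the $O(\eps)$, $O(\eta\sqrt\eps)$ and $O(\eta^2)$ contributions gives~\eqref{restueta-1d-H1}, and~\eqref{restueta-1d-Linfty} follows from the reduction established above. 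The delicate point throughout is precisely that each stochastic sum be recognised as a sum of \emph{independent}, mean-zero terms; it is the absence here of the slowly-decaying Green-function tails present in dimension $d>1$ that removes the logarithmic factor of~\eqref{restueta} and leaves the clean rate $\eta\sqrt\eps$.
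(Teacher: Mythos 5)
Your proposal is correct, and it executes the proof by a genuinely different, one-dimensional route. The paper keeps the decomposition~\eqref{eq:decompo_gene} into $u_0^\eps$, $\overline{u}_1^\eps$, the fluctuations $\phi_k^\eps$ and the remainder $r_\eta^\eps$, asserts (as ``tedious but straightforward computations'') the one-dimensional sharpenings of Propositions~\ref{the:u0eps}, \ref{theo:restubar1} and~\ref{theo:restu1}, namely $\|\eps (\theta_0^\eps)'\|_{L^2}\leq C\eps$, $\|(\overline{u}_1^\eps)'-(\overline{v}_1^\eps)'\|_{L^2}\leq C\eps$ and $\sum_k \|(\phi_k^\eps)'-(\betaa_k^\eps)'\|^2_{L^2}\leq C\eps$, and then converts the derivative bound into~\eqref{restueta-1d-Linfty} and~\eqref{restueta-1d-H1} through $\|R\|_{L^\infty}\leq |R(0)|+\|R'\|_{L^2}$, estimating the boundary value $R(0)$ separately. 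You instead integrate the equation once, expand $1/a^\eps_\eta$ and the flux constant $K^\eps_\eta$ in powers of $\eta$, and exploit explicit correctors: your formula $\chi'=-\mathbf{1}_{(0,1)}A^\star_{per}B_{per}/A_{per}^2$ (the integration constant being forced to vanish by $\chi'\in L^2(\RR)$) is exactly right, the cancellation of the cellwise fluctuation of $(u^\eps_\eta)'$ by the $\chi'$ term of~\eqref{expueta} is the correct mechanism, and your two surviving stochastic terms --- the flux-constant fluctuation $\kappa^\eps$, a linear combination of the i.i.d.\ variables $X_k-\esp(X_0)$ with deterministic coefficients satisfying $\sum_k (c_k^\eps)^2=O(\eps)$, and the non-decaying sum $\eps\eta\sum_k (X_k-\esp(X_0))\chi(\cdot/\eps-k)(u_0^\star)''$ of pointwise variance $O(\eps^{-1})$ --- are precisely what produces the rate $\eta\sqrt{\eps}$; they are the one-dimensional avatars of the per-cell flux constants hidden in the paper's differences $\phi_k^\eps-\betaa_k^\eps$. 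Your route thus supplies, in elementary and self-contained form, exactly the computations the paper leaves implicit, at the price of being unavailable in dimension $d>1$, and uses $f\in L^2$ only where it should (via $F'=f$ in the integration by parts for $K^\eps_0-K^\star_0$, and via $(u_0^\star)''=-f/A^\star_{per}\in L^2$).

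One sentence is too quick, however: the claim that $R(0)=-v^\eps_\eta(0,\omega)$ is ``readily seen to be $O(\eps)$ in $L^2(\Omega)$.'' The corrector block at $x=0$ contains $\eps\,\eta\,(u_0^\star)'(0)\sum_{k\in I_\eps}(X_k-\esp(X_0))\,\chi(-k)$, and since $\chi$ does not decay this sum is generically of order $\eps^{-1/2}$ in $L^2(\Omega)$ --- by your own variance computation for the interior term --- so that $R(0)$ is only $O(\eps+\eta\sqrt{\eps})$. This is exactly what the paper finds, $\esp\left[|R(0)|^2\right]\leq C\eps^2+C\eta^2\eps$, and it remains compatible with~\eqref{restueta-1d-H1}--\eqref{restueta-1d-Linfty}, so nothing breaks; alternatively, you may normalize the additive constant of $\chi$ so that $\chi\equiv 0$ on $(-\infty,0]$ (legitimate since $\chi'$ is supported in $(0,1)$, and all $k\in I_\eps$ satisfy $-k\leq 0$ when $\mathcal{D}=(0,1)$), which makes the boundary term vanish and your $O(\eps)$ claim true. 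Either way, the assertion needs a line of justification rather than ``readily seen,'' since as written it contradicts your own later observation that the analogous stochastic sums are only $O(\eps^{-1/2})$.
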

Note that, in dimension $d=1$, we do not need to
assume~\eqref{hyp:a_holder} and~\eqref{hyp:b_holder}. In dimensions
$d>1$, as pointed out above, these assumptions are used to have that the 
correctors $\wper_p$ and $\psi_p$, solutions to~\eqref{PB:w0}
and~\eqref{eq:psi_p} respectively, both belong to
$W^{1,\infty}(\RR^d)$. In dimension $d=1$, the coercivity assumption on
$A_{per}$ and the boundedness assumption on $B_{per}$ are enough to show
that $\wper$ and $\psi$ both belong to $W^{1,\infty}(\RR)$. Likewise,
when $d>1$, we assumed that $u_0^\star \in W^{2,\infty}(\mathcal{D})$ and
$\overline{u}_1^\star \in W^{2,\infty}(\mathcal{D})$ (which implies that
$f \in L^\infty({\cal D})$). When $d=1$, the assumption $f \in L^2({\cal
  D})$ is enough. 

On another note, we notice that $\chi$ is now only defined up to an additive
constant. Again, 
changing $\chi$ by a constant does not change the order of convergence
in~\eqref{restueta-1d-H1}-\eqref{restueta-1d-Linfty} with respect to
$\varepsilon$ and $\eta$, but only changes the constant $C$. 

\medskip

In addition to its theoretical interest, Theorem~\ref{exp2scaleueta} has
also interesting numerical counterparts. Indeed, to compute
$v^\varepsilon_\eta$, one needs to solve problems set on a {\em bounded}
domain (either with Dirichlet or periodic boundary conditions), and to
solve for $\chi_p$, solution to the problem~\eqref{eq:pb_chip}, set on
the entire space. However, the right hand side in~\eqref{eq:pb_chip} is
the divergence of a compactly supported function, and we will see 
that $\chi_p(x)$ quickly vanishes when $x$ is sufficiently
large (see Lemma~\ref{lem4} below). Hence, in practice, it is possible to
approximate~\eqref{eq:pb_chip} by using Dirichlet boundary conditions on
a domain of limited a size. 

\medskip

The proof of Theorem~\ref{exp2scaleueta} consists of two steps.  
The first one is to expand 
$u^\varepsilon_\eta$ with respect to $\eta$. This is
performed in Section~\ref{sec:exp_eta} below (see
Lemma~\ref{lem1eta}). Each term of the expansion of $u^\varepsilon_\eta$
is found to be the unique solution of a partial differential equation
with a {\em deterministic}, highly oscillating coefficient, to which is
associated a homogenized equation. The second
step of the proof consists in successively 
estimating, for each of the terms of the expansion in $\eta$, the rate
of convergence of their two scale expansion in
$\varepsilon$. Corresponding results
are stated in Section~\ref{2scale} (and proved in
Section~\ref{2scale:proofs}). Collecting these results, we are then in
position to prove our main result, Theorem~\ref{exp2scaleueta} (see 
Section~\ref{sec:preuve_main}, where we also prove
Theorem~\ref{exp2scaleueta-1D}). 

\section{Expansion in powers of $\eta$}
\label{sec:exp_eta}

In this section, we expand the solution $u^\varepsilon_\eta$
to~\eqref{PB:stoch-a} with respect to $\eta$. 

\begin{lemme}
\label{lem1eta}
Let $u^\varepsilon_\eta$ be the solution to~\eqref{PB:stoch-a}. Under
the assumption~\eqref{def:A-eta-1}, it can be expanded in powers of
$\eta$ as follows: 
\begin{equation}
\label{lem1:res1}
u^\varepsilon_\eta = 
u^\varepsilon_0 + \eta u_1^\varepsilon + \eta^2 r_\eta^\varepsilon,
\end{equation}
where $u_0^\varepsilon$ is solution to the deterministic problem~\eqref{0},
$u_1^\varepsilon$ is solution to
\begin{equation}
\label{PB:u1eps}
\left\{ 
\begin{array}{l l}
\dps
-\hbox{div}\left[A_{per}\left(\frac{\cdot}{\varepsilon}\right)\nabla
  u_1^\varepsilon(\cdot,\omega) \right]=
\hbox{div}\left[A_1\left(\frac{\cdot}{\varepsilon},\omega\right)\nabla
  u_0^\varepsilon \right] & \text{ in }\mathcal{D}, 
\\
u_1^\varepsilon(\cdot,\omega)=0 & \text{ on }\partial\mathcal{D},
\end{array}
\right.
\end{equation}
and $r_\eta^\varepsilon$ is solution to 
$$
\left\{ 
\begin{array}{l l}
\dps
-\hbox{div}\left[A_\eta\left(\frac{\cdot}{\varepsilon},\omega\right)\nabla
  r_\eta^\varepsilon(\cdot,\omega) \right]=
\hbox{div}\left[A_1\left(\frac{\cdot}{\varepsilon},\omega\right)\nabla
  u_1^\varepsilon(\cdot,\omega) \right] & \text{ in }\mathcal{D}, 
\\
r_\eta^\varepsilon(\cdot,\omega)=0 & \text{ on }\partial\mathcal{D}.
\end{array}
\right.
$$
In addition, we have, almost surely,
\begin{equation}
\label{ape-1}
\|u_0^\varepsilon\|_{H^1(\mathcal{D})} \leq C, 
\quad
\|u_1^\varepsilon(\cdot,\omega)\|_{H^1(\mathcal{D})} \leq C, 
\quad 
\|r_\eta^\varepsilon(\cdot,\omega)\|_{H^1(\mathcal{D})} \leq C,
\end{equation}
where $C$ is a deterministic constant independent of $\varepsilon$ and $\eta$.
\end{lemme}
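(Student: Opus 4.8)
The plan is to expand $u^\varepsilon_\eta$ formally in powers of $\eta$, identify the equations satisfied by each term, and then justify the expansion rigorously by controlling the remainder. Writing $A_\eta = A_{per}(\cdot/\varepsilon) + \eta A_1(\cdot/\varepsilon,\omega)$ from~\eqref{def:A-eta-1}, I would first \emph{define} $u_0^\varepsilon$ and $u_1^\varepsilon$ as the solutions to~\eqref{0} and~\eqref{PB:u1eps} respectively (these are well-posed by Lax--Milgram, using uniform ellipticity of $A_{per}$ and the boundedness of $A_1$, which makes the right-hand side $\operatorname{div}[A_1(\cdot/\varepsilon,\omega)\nabla u_0^\varepsilon]$ a well-defined element of $H^{-1}(\mathcal{D})$). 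Then I would simply \emph{define} $r_\eta^\varepsilon$ by the relation~\eqref{lem1:res1}, i.e.\ set $\eta^2 r_\eta^\varepsilon := u_\eta^\varepsilon - u_0^\varepsilon - \eta u_1^\varepsilon$, and the real work is to check that this $r_\eta^\varepsilon$ solves the claimed PDE and satisfies the uniform bound.

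To derive the equation for $r_\eta^\varepsilon$, I would subtract the equations for $u_0^\varepsilon$ and $\eta u_1^\varepsilon$ from the equation~\eqref{PB:stoch-a} for $u_\eta^\varepsilon$. Starting from $-\operatorname{div}[A_\eta \nabla u_\eta^\varepsilon] = f$, substitute $u_\eta^\varepsilon = u_0^\varepsilon + \eta u_1^\varepsilon + \eta^2 r_\eta^\varepsilon$ and expand $A_\eta \nabla u_\eta^\varepsilon = (A_{per}+\eta A_1)(\nabla u_0^\varepsilon + \eta \nabla u_1^\varepsilon + \eta^2 \nabla r_\eta^\varepsilon)$. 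Collecting terms by powers of $\eta$: the $\eta^0$ terms give exactly~\eqref{0}, the $\eta^1$ terms give exactly~\eqref{PB:u1eps}, and what remains forces $-\operatorname{div}[A_\eta \nabla(\eta^2 r_\eta^\varepsilon)] = \operatorname{div}[\eta^2 A_1 \nabla u_1^\varepsilon]$, i.e.\ after dividing by $\eta^2$, precisely the stated equation for $r_\eta^\varepsilon$. The key algebraic point is that the leftover term $\eta^2 \operatorname{div}[A_1 \nabla u_1^\varepsilon]$ is exactly the source, with all lower-order terms absorbed into the definitions of $u_0^\varepsilon$ and $u_1^\varepsilon$; the homogeneous Dirichlet conditions are inherited since all three functions vanish on $\partial\mathcal{D}$.

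For the a priori estimates~\eqref{ape-1}, I would apply the standard energy estimate to each problem in turn. For $u_0^\varepsilon$: testing~\eqref{0} against $u_0^\varepsilon$ and using coercivity of $A_{per}$ together with the Poincar\'e inequality gives $\|u_0^\varepsilon\|_{H^1} \leq C\|f\|_{L^2}$, with $C$ depending only on the ellipticity constant and $\mathcal{D}$, hence independent of $\varepsilon$. For $u_1^\varepsilon$: testing~\eqref{PB:u1eps} against $u_1^\varepsilon$, the right-hand side is controlled by $\|A_1\|_{L^\infty}\|\nabla u_0^\varepsilon\|_{L^2}\|\nabla u_1^\varepsilon\|_{L^2}$, and after using coercivity and absorbing, I obtain $\|u_1^\varepsilon\|_{H^1} \leq C\|\nabla u_0^\varepsilon\|_{L^2} \leq C$, uniformly in $\varepsilon$, $\eta$, and $\omega$ (since $\|A_1\|_{L^\infty}$ is bounded a.s.\ by the assumption $|X_k|\leq C$ and boundedness of $B_{per}$). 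For $r_\eta^\varepsilon$ the same argument works, crucially using that $A_\eta$ is \emph{uniformly} elliptic with the \emph{same} coercivity constant $c$ for all $\eta$ (this is exactly the standing hypothesis on $A_\eta$), so the bound $\|r_\eta^\varepsilon\|_{H^1}\leq C\|\nabla u_1^\varepsilon\|_{L^2}\leq C$ holds with $C$ independent of $\eta$.

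The main obstacle, though quite mild here, is ensuring every constant is genuinely independent of $\varepsilon$, $\eta$, and $\omega$ simultaneously. The $\varepsilon$-independence is automatic because coercivity and boundedness of $A_{per}(\cdot/\varepsilon)$ hold with $\varepsilon$-independent constants (rescaling does not change ellipticity bounds), and Poincar\'e's constant depends only on $\mathcal{D}$. The $\eta$-independence for the $r_\eta^\varepsilon$ estimate is the one point requiring care, since the operator itself depends on $\eta$; this is precisely why the uniform ellipticity of $A_\eta$ for \emph{all} $\eta \in \RR$ was assumed, rather than merely for small $\eta$. The $\omega$-independence follows from the almost-sure uniform bound on the $X_k$. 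I do not expect any serious difficulty beyond bookkeeping.
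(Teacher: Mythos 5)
Your proposal is correct and takes essentially the same route as the paper, whose proof is just a two-line remark that the decomposition follows from linearity and the bounds~\eqref{ape-1} from the uniform ellipticity of $A_\eta$ and $A_{per}$ together with the boundedness of $A_1$. Your expanded version supplies exactly the bookkeeping the paper leaves implicit — in particular the correct observation that the leftover $\eta^3 \, \hbox{div}\left[A_1 \nabla r_\eta^\varepsilon\right]$ term recombines with the $\eta^2$ terms so that $r_\eta^\varepsilon$ is governed by the full operator $A_\eta$ (whence the need for coercivity of $A_\eta$ uniform in $\eta$, which is the standing hypothesis), and that the almost-sure uniform bound on the $X_k$ makes the constant in~\eqref{ape-1} deterministic.
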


\begin{proof}
The relation~\eqref{lem1:res1} is a simple consequence of the linearity
of the considered equation. The bounds~\eqref{ape-1} follow from 
the uniform ellipticity of the matrices $A_\eta$ and
$A_{per}$, and the boundedness of $A_1$.
\end{proof}

For the sequel, it is useful to further decompose $u_1^\varepsilon$ in a
deterministic part and a stochastic part of vanishing expectation.

\begin{lemme}
\label{lem:decomposition}
Under assumptions~\eqref{def:A-eta-1}-\eqref{struc:A1},
the solution $u_1^\varepsilon$ to~\eqref{PB:u1eps} writes
\begin{equation}
u_1^\varepsilon = \esp(X_0) \overline{u}_1^\varepsilon + \sum\limits_{k
  \in \ZZ^d} (X_k(\omega)-\esp(X_0)) \phi_k^\varepsilon,
\label{expu1}
\end{equation}
where $\overline{u}_1^\varepsilon$ is the unique solution to
\begin{equation}
\label{PB:u1bareps}
\left\{ 
\begin{array}{l l}
\dps
-\hbox{div}\left[A_{per}\left(\frac{\cdot}{\varepsilon}\right)\nabla
  \overline{u}_1^\varepsilon \right]=
\hbox{div}\left[B_{per}\left(\frac{\cdot}{\varepsilon}\right)\nabla
  u_0^\varepsilon \right] & \text{ in }\mathcal{D}, 
\\
\overline{u}_1^\varepsilon=0 & \text{ on }\partial\mathcal{D},
\end{array}
\right.
\end{equation}
and $\phi_k^\varepsilon$ is the unique solution to
\begin{equation}
\label{PB:phikeps}
\left\{ 
\begin{array}{l l}
\dps
-\hbox{div}\left[A_{per}\left(\frac{\cdot}{\varepsilon}\right)\nabla
  \phi_k^\varepsilon \right]=
\hbox{div}\left[\mathbf{1}_{Q+k}\left(\frac{\cdot}{\varepsilon}\right)
  B_{per}\left(\frac{\cdot}{\varepsilon}\right)\nabla u_0^\varepsilon
\right] & \text{ in }\mathcal{D}, 
\\
\phi_k^\varepsilon=0 & \text{ on }\partial\mathcal{D}.
\end{array}
\right.
\end{equation}
In addition, there exists $C$, independent of $\eps$, such that
\begin{equation}
\label{eq:phi_estim1}
\esp \left[ \left\| \sum\limits_{k \in \ZZ^d} [X_k-\esp(X_0)]
    \phi_k^\varepsilon \right\|^2_{H^1(\mathcal{D})} \right] 
\leq C.
\end{equation}
Assume furthermore that~\eqref{hyp:a_holder} holds, and that
\begin{equation}
\label{hyp:regul_f}
f \in L^q({\mathcal D}) \quad \text{for some} \quad q>d.
\end{equation} 
Then there exists $C$, independent of $k$ and $\eps$, such that
\begin{equation}
\label{eq:phi_estim2}
\| \phi_k^\varepsilon \|_{L^\infty({\mathcal D})} \leq C \varepsilon.
\end{equation}
\end{lemme}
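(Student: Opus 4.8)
The plan is to establish the three assertions in turn, the first two being essentially algebraic and energetic and the third being the genuinely analytical one. For the decomposition \eqref{expu1}, I would insert the structural form \eqref{struc:A1} of $A_1$ into the right-hand side of \eqref{PB:u1eps} and split each random variable as $X_k = \esp(X_0) + (X_k - \esp(X_0))$. Since the translated cells tile space, $\sum_{k\in\ZZ^d}\mathbf{1}_{Q+k} = 1$ almost everywhere, the deterministic part collapses to $\esp(X_0)\,B_{per}(\cdot/\eps)\nabla u_0^\eps$, while the fluctuating part is $\sum_k (X_k-\esp(X_0))\,\mathbf{1}_{Q+k}(\cdot/\eps)B_{per}(\cdot/\eps)\nabla u_0^\eps$. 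As the operator in \eqref{PB:u1eps} is linear and the problems \eqref{PB:u1bareps} and \eqref{PB:phikeps} are precisely those driven by these two pieces, the identity \eqref{expu1} follows. I would note that the source of \eqref{PB:phikeps} is supported in $\eps(Q+k)$, so $\phi_k^\eps \equiv 0$ whenever $k\notin I_\eps$, whence the sum over $k$ is finite and no convergence issue arises. For \eqref{eq:phi_estim1}, set $S^\eps = \sum_k (X_k-\esp(X_0))\phi_k^\eps = u_1^\eps - \esp(X_0)\overline{u}_1^\eps$; one route is to invoke directly the almost sure bound $\|u_1^\eps\|_{H^1}\le C$ of Lemma~\ref{lem1eta} together with the deterministic energy estimate for $\overline{u}_1^\eps$. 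Equivalently, $S^\eps$ solves $-\hbox{div}[A_{per}(\cdot/\eps)\nabla S^\eps] = \hbox{div}[g_\eps B_{per}(\cdot/\eps)\nabla u_0^\eps]$ with $g_\eps = \sum_k (X_k-\esp(X_0))\mathbf{1}_{Q+k}(\cdot/\eps)$ bounded by $2C$ almost surely, and testing against $S^\eps$ (coercivity of $A_{per}$, boundedness of $B_{per}$, $\|\nabla u_0^\eps\|_{L^2}\le C$, Poincar\'e) gives $\|S^\eps\|_{H^1}\le C$ almost surely, hence \eqref{eq:phi_estim1} after taking expectations.

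The bound \eqref{eq:phi_estim2} is the main obstacle and requires the regularity assumptions \eqref{hyp:a_holder} and \eqref{hyp:regul_f}. I would first rescale $x=\eps z$: the function $\widetilde\phi_k(z)=\phi_k^\eps(\eps z)$ solves the \emph{non-oscillating} problem $-\hbox{div}[A_{per}\nabla\widetilde\phi_k]=\hbox{div}[\mathbf{1}_{Q+k}B_{per}\nabla\widetilde u_0]$ on the dilated domain $\mathcal{D}/\eps$, where $\widetilde u_0(z)=u_0^\eps(\eps z)$ and the source is supported in the unit cell $Q+k$. Writing the representation formula through the Green function $\widetilde G$ of $L=-\hbox{div}[A_{per}\nabla\cdot]$ on $\mathcal{D}/\eps$ and integrating by parts,
$$
\widetilde\phi_k(z) = -\int_{(Q+k)\cap(\mathcal{D}/\eps)} \nabla_w\widetilde G(z,w)\cdot B_{per}(w)\,\nabla_w\widetilde u_0(w)\,dw .
$$
Two uniform-in-$\eps$ inputs then close the estimate. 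The first is the Avellaneda--Lin gradient bound $\|\nabla u_0^\eps\|_{L^\infty(\mathcal{D})}\le C$, valid under \eqref{hyp:a_holder} and \eqref{hyp:regul_f}, which yields $\|\nabla_w\widetilde u_0\|_{L^\infty}=\eps\,\|\nabla u_0^\eps\|_{L^\infty}\le C\eps$. The second is the gradient estimate $|\nabla_w\widetilde G(z,w)|\le C\,|z-w|^{1-d}$, uniform as $\mathcal{D}/\eps$ dilates; this is exactly the property for which \eqref{hyp:a_holder} is imposed, following \cite{abl-green}. Combining them,
$$
|\widetilde\phi_k(z)| \le C\eps \int_{Q+k}\frac{dw}{|z-w|^{d-1}} \le C\eps ,
$$
the last integral being bounded uniformly in $z$ and $k$ since near the singularity $\int_{|w-z|\le 1}|z-w|^{1-d}\,dw \sim \int_0^1 dr$ converges. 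Undoing the rescaling gives $\|\phi_k^\eps\|_{L^\infty(\mathcal{D})}=\|\widetilde\phi_k\|_{L^\infty(\mathcal{D}/\eps)}\le C\eps$.

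The conceptual heart, and the reason the exponent in \eqref{eq:phi_estim2} is exactly one, is that the source of $\phi_k^\eps$ is concentrated on a set of measure $\sim\eps^d$, which beats the $\eps^{1-d}$ singularity of the rescaled gradient of the Green function by precisely one power of $\eps$. The real difficulty lies entirely in securing the two uniform estimates above over the family of dilating domains $\mathcal{D}/\eps$: once these deep inputs from the Avellaneda--Lin theory (uniform interior Lipschitz bounds and uniform pointwise Green function estimates under H\"older continuity of $A_{per}$) are granted, the remainder is the elementary integral computation just sketched. I would expect the delicate point in a fully rigorous write-up to be justifying the Green function representation and the uniformity of the estimate \eqref{eq:phi_estim2} in $k$, which follows because the bound above never sees the location $\eps k$ of the cell, only the unit size of its support.
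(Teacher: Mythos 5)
Your proposal is correct, and for the decomposition \eqref{expu1} and the $L^\infty$ bound \eqref{eq:phi_estim2} it follows essentially the paper's own route: linearity plus the splitting $X_k=\esp(X_0)+(X_k-\esp(X_0))$ and the observation that $\phi_k^\eps\equiv 0$ off $I_\eps$; then rescaling to ${\cal D}/\eps$, the Green function representation with an integration by parts, the uniform gradient bound $|\nabla_y\Gamma_\eps(x,y)|\le C|x-y|^{1-d}$ from \cite{abl-green}, and the observation that the unit-cell support of the source yields exactly one power of $\eps$ (your computation in rescaled variables, with the integral $\int_{Q+k}|z-w|^{1-d}\,dw$ bounded uniformly, is in fact slightly cleaner than the paper's case distinction $|x-\eps k|\lessgtr\eps$ in the original variables). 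Two genuine, if mild, divergences are worth noting. First, for \eqref{eq:phi_estim1} you bound $S^\eps=\sum_k(X_k-\esp(X_0))\phi_k^\eps$ \emph{almost surely}, using only $|X_k-\esp(X_0)|\le 2C$ a.s.\ and a single energy estimate; the paper instead performs a cellwise energy estimate $\|\phi_k^\eps\|_{H^1({\cal D})}\le C\|u_0^\eps\|_{H^1(\eps(Q+k))}$ and then uses independence to get the exact identity $\esp\bigl[\|S^\eps\|^2_{H^1({\cal D})}\bigr]=\var(X_0)\sum_k\|\phi_k^\eps\|^2_{H^1({\cal D})}$. Your argument is more elementary and does not use independence at all; the paper's version buys the localized, summed-over-cells structure that reappears verbatim in the term $D_2$ of the proof of Theorem~\ref{exp2scaleueta} (via Proposition~\ref{theo:restu1}), where independence is genuinely needed. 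Second, for the input $\|\nabla u_0^\eps\|_{L^\infty({\cal D})}\le C$ you cite Avellaneda--Lin theory as a black box, which is legitimate; the paper instead derives it in a few lines from the same Green function bound \eqref{eq:bound_Gamma} together with H\"older's inequality, which is precisely where the hypothesis $f\in L^q({\cal D})$ with $q>d$ (so that $q^\star<d/(d-1)$ and $y\mapsto|x-y|^{1-d}\in L^{q^\star}$) enters; deriving it keeps the lemma self-contained given \cite{abl-green} and makes the role of \eqref{hyp:regul_f} explicit. Your closing remark that the delicate points are the representation formula and the uniformity over the dilating domains ${\cal D}/\eps$ is accurate: the paper addresses the former by invoking that $\Gamma_\eps(y,x)$ is the Green function of the adjoint operator.
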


\begin{proof}
We note that, if $k \in \ZZ^d$ is such that $\varepsilon(Q+k) \cap
{\mathcal D} = \emptyset$, then~\eqref{PB:phikeps} writes
$$
\left\{ 
\begin{array}{l l}
\dps
-\hbox{div}\left[A_{per}\left(\frac{\cdot}{\varepsilon}\right)\nabla
  \phi_k^\varepsilon \right]= 0 & \text{ in }\mathcal{D}, 
\\
\phi_k^\varepsilon=0 & \text{ on }\partial\mathcal{D},
\end{array}
\right.
$$
the solution of which is obviously $\phi_k^\varepsilon \equiv 0$. The sum
in~\eqref{expu1} hence only contains a finite number of terms, and the
proof of the decomposition~\eqref{expu1} goes by linearity of the
equation~\eqref{PB:u1eps}. 
Note however that the number of terms in~\eqref{expu1} depends on
$\eps$, and diverges when $\eps \to 0$. 

\medskip

We now prove the bound~\eqref{eq:phi_estim1}.
Using that $A_{per}$ is coercive, we infer
from~\eqref{PB:phikeps} that 
\begin{eqnarray*}
\alpha \|\phi_k^\varepsilon \|^2_{H^1(\mathcal{D})} 
&\leq& 
\int_\mathcal{D} (\nabla \phi_k^\varepsilon)^T
A_{per}\left(\frac{\cdot}{\varepsilon}\right)\nabla \phi_k^\varepsilon 
\\ 
&\leq& 
\int_\mathcal{D} (\nabla
\phi_k^\varepsilon)^T\mathbf{1}_{Q+k}\left(\frac{\cdot}{\varepsilon}\right)
B_{per}\left(\frac{\cdot}{\varepsilon}\right)\nabla u_0^\varepsilon 
\\ 
&\leq& 
\|B_{per}\|_{L^\infty} \|\phi_k^\varepsilon\|_{H^1(\mathcal{D})} \|u_0^\varepsilon\|_{H^1(\varepsilon(Q+k))},
\end{eqnarray*}
where $\alpha >0$ is some constant that only depends on the coercivity
constant of $A_{per}$ and the Poincar\'e constant of the domain ${\cal D}$.
Thus
$$
\|\phi_k^\varepsilon \|^2_{H^1(\mathcal{D})} \leq 
\alpha^{-2}\|B_{per}\|^2_{L^\infty} \|u_0^\varepsilon\|^2_{H^1(\varepsilon(Q+k))}.
$$
Using that $\phi_k^\varepsilon \equiv 0$ as soon as 
$\varepsilon(Q+k) \cap {\mathcal D} = \emptyset$, we obtain
$$
\sum\limits_{k \in \ZZ^d} 
\left\| \phi_k^\varepsilon \right\|^2_{H^1(\mathcal{D})}
\leq 
\alpha^{-2}\|B_{per}\|^2_{L^\infty} \|u_0^\varepsilon\|^2_{H^1(\mathcal{D})}.
$$
We deduce from that bound and the assumption that the random variables
$X_k$ are i.i.d. that
$$
\esp \left[ \left\| \sum\limits_{k \in \ZZ^d} (X_k-\esp(X_0))
    \phi_k^\varepsilon \right\|^2_{H^1(\mathcal{D})} \right] 
=
\var(X_0) \sum\limits_{k \in \ZZ^d} 
\left\| \phi_k^\varepsilon \right\|^2_{H^1(\mathcal{D})}
\leq 
\var(X_0) \alpha^{-2} \|B_{per}\|^2_{L^\infty} 
\|u_0^\varepsilon\|^2_{H^1(\mathcal{D})}
\leq C,
$$
where $C$ is independent of $\eps$ (we have used~\eqref{ape-1} to bound
$u_0^\varepsilon$). We thus have shown~\eqref{eq:phi_estim1}.

\medskip

We finally turn to the proof of~\eqref{eq:phi_estim2}. Let us
define $\overline{\phi}^\varepsilon_k(x) = \phi_k^\varepsilon(\eps x)$
on ${\cal D}/\eps$. In view of~\eqref{PB:phikeps}, we see that
$\overline{\phi}^\varepsilon_k$ solves
$$
\left\{ 
\begin{array}{l l}
\dps
-\hbox{div}\left[A_{per} \nabla \overline{\phi}^\varepsilon_k \right] = 
\eps \hbox{div}\left[\mathbf{1}_{Q+k}
B_{per} \nabla u_0^\varepsilon (\eps \cdot) \right] & \text{ in }
\mathcal{D}/\eps,  
\\
\overline{\phi}^\varepsilon_k = 0 & \text{ on } \partial(\mathcal{D}/\eps).
\end{array}
\right.
$$
Introduce now the Green function $\Gamma_\varepsilon(x,y)$ associated to
the operator $L = - \hbox{div}\left[A_{per} \nabla \cdot \right]$ on the
domain $\mathcal{D}/\eps$, with homogeneous Dirichlet boundary
conditions. We recall 
that $\Gamma^T_\eps(x,y) := \Gamma_\varepsilon(y,x)$ is the Green
function associated to the adjoint operator $L^T = -
\hbox{div}\left[A^T_{per} \nabla \cdot \right]$ on the domain
$\mathcal{D}/\eps$, with homogeneous Dirichlet boundary conditions
(a proof of this fact is given in~\cite[Theorem 1.3]{gruter}
and~\cite[Theorem 1]{dolzmann} in the case $d \geq 3$, and this proof
carries over to the case $d=2$). Consequently, we have
$\Gamma_\varepsilon(x,y) = 0$ as soon as $x$ or $y$ belongs to the
boundary $\partial (\mathcal{D}/\eps)$ 
We can thus write 
\begin{eqnarray*}
\overline{\phi}^\varepsilon_k(x) 
& = &
\eps \int_{{\mathcal D}/\eps} \Gamma_\varepsilon(x,y) \
\hbox{div}_y \left[\mathbf{1}_{Q+k}(y)
B_{per}(y) \nabla u_0^\varepsilon (\eps y) \right] \, dy
\\
&=&
- \eps \int_{Q+k}\nabla_y \Gamma_\varepsilon(x,y) B_{per}(y) \nabla
u_0^\varepsilon(\eps y) \, dy.
\end{eqnarray*}
Hence, for any $x \in {\mathcal D}$, we have
$$
\phi_k^\varepsilon(x) 
=
- \eps^{1-d} 
\int_{\eps(Q+k)} 
\nabla_y \Gamma_\varepsilon\left(\frac{x}{\eps},\frac{y}{\eps}\right) 
B_{per}\left(\frac{y}{\eps}\right)  
\nabla u_0^\varepsilon(y) \, dy.
$$
Using the fact (see~\cite[Proposition 8]{abl-green}) that, under
assumption~\eqref{hyp:a_holder}, the
Green function $\Gamma_\eps$ on the domain $\mathcal{D}/\eps$ satisfies
\begin{equation}
\label{eq:bound_Gamma}
\forall x \in \mathcal{D}/\eps, \quad
\forall y \in \mathcal{D}/\eps, \quad
\left| \nabla_x \Gamma_\varepsilon(x,y) \right| +
\left| \nabla_y \Gamma_\varepsilon(x,y) \right| \leq 
\frac{C}{|x-y|^{d-1}}
\end{equation}
for a constant $C$ independent of $\varepsilon$, we have
\begin{equation}
\label{eq1}
|\phi_k^\varepsilon(x) | \leq 
C \| B_{per} \|_{L^\infty} \|\nabla u_0^\varepsilon\|_{L^\infty(\mathcal{D})} 
\int_{\varepsilon(Q+k)}\frac{1}{|x-y|^{d-1}}dy.
\end{equation}
We will show in the sequel that~\eqref{hyp:regul_f} implies
that there exists $C$ such that, for all $\eps$,
\begin{equation}
\label{eq:bound_u_dur}
\|\nabla u_0^\varepsilon\|_{L^\infty(\mathcal{D})} \leq C.
\end{equation} 
We are thus left with bounding the integral in~\eqref{eq1}.
To this aim, we distinguish two cases. 
If $|x-\eps k| \leq \varepsilon$, then there exists a constant $\rho_d$
than only depends on the dimension such that $\varepsilon(Q+k) \subset
B(x,\rho_d \varepsilon)$ (for instance, in dimension $d=2$, $\rho_2 = 1
+ \sqrt{2}/2$). We then have
\begin{equation}
\label{eq2}
\int_{\varepsilon(Q+k)}\frac{1}{|x-y|^{d-1}} dy 
\leq 
\int_{B(x,\rho_d \varepsilon)} \frac{1}{|x-y|^{d-1}} dy 
\leq C \varepsilon, \quad
\text{$C$ independent of $\eps$.}
\end{equation}
Otherwise, if $|x-\eps k| \geq \varepsilon$, then any $y \in
\varepsilon(Q+k)$ satisfies $\overline{\rho}_d \varepsilon \leq |x-y|$
for a constant $\overline{\rho}_d$ that only depends on $d$ (in
dimension $d=2$, $\rho_2 = 1 - \sqrt{2}/2$). For those $x$, we have
\begin{equation}
\label{eq3}
\int_{\varepsilon(Q+k)} \frac{1}{|x-y|^{d-1}} \, dy 
\leq 
\frac{1}{(\overline{\rho}_d \varepsilon)^{d-1}} \int_{\varepsilon(Q+k)}  dy 
\leq 
C \varepsilon, \quad
\text{$C$ independent of $\eps$.}
\end{equation}
Thus, collecting~\eqref{eq1},~\eqref{eq:bound_u_dur},~\eqref{eq2}
and~\eqref{eq3}, we obtain that
$$
\left\| \phi_k^\varepsilon \right\|_{L^\infty({\mathcal D})} \leq C \eps,
\quad
\text{$C$ independent of $\eps$.}
$$
Proving~\eqref{eq:phi_estim2} therefore amounts to now
proving~\eqref{eq:bound_u_dur}. Again using  
the Green function $\Gamma_\varepsilon(x,y)$ associated to
the operator $L = - \hbox{div}\left[A_{per} \nabla \cdot \right]$ on the
domain $\mathcal{D}/\eps$, with homogeneous Dirichlet boundary
conditions, we write 
$$
\nabla u_0^\eps(x)  
= 
\eps^{1-d} \int_{\mathcal D} \nabla_x
\Gamma_\eps\left(\frac{x}{\eps},\frac{y}{\eps}\right) \ f(y) \, dy.
$$
Using the bound~\eqref{eq:bound_Gamma}, we deduce that there exists $C$
independent of $\eps$ such that
$$
\forall x \in {\mathcal D}, \quad
\left| \nabla u_0^\eps(x) \right| 
\leq
C \int_{\mathcal D} \frac{|f(y)|}{|x-y|^{d-1}} \, dy.
$$
In view of assumption~\eqref{hyp:regul_f}, we have $f \in L^q({\mathcal
  D})$ for some $q>d$. Using H\"older inequality, we write
$$
\forall x \in {\mathcal D}, \quad
\left| \nabla u_0^\eps(x) \right| 
\leq
C \| f \|_{L^q({\mathcal D})} \ 
\left\| \frac{1}{|x-\cdot|^{d-1}} \right\|_{L^{q^\star}({\mathcal D})},
\quad
\frac{1}{q} + \frac{1}{q^\star} = 1.
$$
The function $y \mapsto |x-y|^{1-d}$ belongs to
$L^p({\mathcal D})$ for any $p < d/(d-1)$. Since $q > d$, we have
$q^\star < d/(d-1)$, and the norm in $L^{q^\star}$ of $y \mapsto
|x-y|^{1-d}$ is independent of $x$. The above estimate thus
yields~\eqref{eq:bound_u_dur}. This concludes the proof of
Lemma~\ref{lem:decomposition}. 
\end{proof}

\section{Two scale expansions in powers of $\varepsilon$}
\label{2scale}

Collecting~\eqref{lem1:res1} and~\eqref{expu1}, we have obtained that
\begin{equation}
\label{eq:decompo_gene}
u^\varepsilon_\eta(x,\omega) = u^\varepsilon_0(x) 
+ \eta \left[
\esp(X_0) \overline{u}_1^\varepsilon(x) + \sum\limits_{k
  \in \ZZ^d} (X_k(\omega)-\esp(X_0)) \phi_k^\varepsilon(x)
\right]
+ \eta^2 r_\eta^\varepsilon(x,\omega),
\end{equation}
where $r_\eta^\varepsilon$ is bounded in $H^1({\mathcal D})$ uniformly
in $\eps$, $\eta$ and $\omega$ (see~\eqref{ape-1}). 
  
We now consider successively each term of the above series and show a
rate of convergence on the 
difference between $u^\varepsilon_0$, $\overline{u}_1^\varepsilon$ and
$\phi_k^\varepsilon$ and their respective two-scale expansions. For
clarity, the proofs of our results are postponed until
Section~\ref{2scale:proofs}. 

\bigskip

We start by $u_0^\varepsilon$ solution to~\eqref{0}. Note that
this problem is a classical periodic homogenization problem, the limit
of which, when $\eps \to 0$, is well-known. The
following result, giving a {\em rate of convergence} of
$u_0^\varepsilon$ to its homogenized limit, is also classical (see
e.g.~\cite[p.~28]{Jikov1994}). 

\begin{prop}
\label{the:u0eps}
Let $u_0^\varepsilon$ and $u_0^\star$ be the solution to~\eqref{0}
and~\eqref{eq:homog-0}, respectively. For any $p \in \RR^d$, we assume
that the solution $\wper_p$ to~\eqref{PB:w0} satisfies 
$\wper_p \in W^{1,\infty}(\RR^d)$. We also assume that $u_0^\star \in
W^{2,\infty}(\mathcal{D})$. 
We then have
\begin{equation}
\label{tse:u0eps}
u_0^\varepsilon = u_0^\star + \varepsilon \sum\limits_{i=1}^d
\wper_{e_i}\left(\frac{\cdot}{\varepsilon}\right) \partial_i u_0^\star +
\varepsilon \theta_0^\varepsilon,
\end{equation}
where $\theta_0^\varepsilon$ satisfies 
\begin{equation}
\label{restepsu0}
\|\varepsilon \theta_0^\varepsilon \|_{H^1(\mathcal{D})} \leq C
\sqrt{\varepsilon}
\end{equation}
for a constant $C$ independent of $\varepsilon$.
\end{prop}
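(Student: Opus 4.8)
The plan is to establish the classical two-scale expansion error estimate~\eqref{tse:u0eps}--\eqref{restepsu0} by testing the equation satisfied by the remainder against itself and exploiting coercivity. First I would define $\theta_0^\varepsilon$ implicitly through~\eqref{tse:u0eps}, so that $\varepsilon \theta_0^\varepsilon = u_0^\varepsilon - v_0^\varepsilon$ where $\dps v_0^\varepsilon = u_0^\star + \varepsilon \sum_{i=1}^d \wper_{e_i}(\cdot/\varepsilon) \partial_i u_0^\star$. The core computation is to determine the equation solved by $r^\varepsilon := u_0^\varepsilon - v_0^\varepsilon$. Applying the operator $\dps -\mbox{div}\left[A_{per}(\cdot/\varepsilon) \nabla \cdot \right]$ to $v_0^\varepsilon$ and using the chain rule, one generates several terms: the leading-order terms organize themselves so that, thanks to the corrector equation~\eqref{PB:w0} and the definition~\eqref{def:A0star} of $A_{per}^\star$, the $O(1/\varepsilon)$ contributions cancel exactly, and one is left with $-\mbox{div}\left[A_{per}(\cdot/\varepsilon) \nabla r^\varepsilon\right] = \mbox{div}(G^\varepsilon)$, where $G^\varepsilon$ collects remainder terms that are products of the (bounded) correctors $\wper_{e_i}$, the matrix $A_{per}$, and second derivatives of $u_0^\star$. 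The key structural point is that $G^\varepsilon$ is bounded in $L^\infty$ (using $\wper_{e_i} \in W^{1,\infty}$ and $u_0^\star \in W^{2,\infty}$), uniformly in $\varepsilon$.

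The next step confronts the boundary-condition discrepancy, which is the source of the $\sqrt{\varepsilon}$ (rather than $\varepsilon$) rate. Since $u_0^\varepsilon = 0$ on $\partial \mathcal{D}$ but $v_0^\varepsilon$ does not vanish there, the natural remainder $r^\varepsilon$ is not in $H^1_0(\mathcal{D})$. The standard remedy is to introduce a cutoff function $m_\varepsilon$ that vanishes in a boundary layer of width $\sim\varepsilon$ and equals $1$ in the interior, and to replace $r^\varepsilon$ by $\dps \widetilde{r}^\varepsilon = u_0^\varepsilon - u_0^\star - \varepsilon \sum_i m_\varepsilon \, \wper_{e_i}(\cdot/\varepsilon)\partial_i u_0^\star$, which does belong to $H^1_0(\mathcal{D})$. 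Testing the weak formulation against $\widetilde{r}^\varepsilon$ and using coercivity of $A_{per}$ gives $\dps \alpha \|\nabla \widetilde{r}^\varepsilon\|^2_{L^2} \leq \left| \int_{\mathcal D} G^\varepsilon \cdot \nabla \widetilde{r}^\varepsilon \right| + (\text{boundary-layer terms})$. The boundary-layer terms are controlled by estimating the $L^2$ norm of $\nabla u_0^\star$ and of the correctors on a strip of width $\varepsilon$, whose measure is $O(\varepsilon)$; this is precisely what produces a contribution of size $\sqrt{\varepsilon}$ after taking square roots.

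I expect the main obstacle to be the careful bookkeeping of the boundary-layer estimates: one must show that both the commutator term $\int G^\varepsilon \cdot \nabla\widetilde{r}^\varepsilon$ and the terms arising from differentiating the cutoff $m_\varepsilon$ (which produce factors of $\nabla m_\varepsilon \sim 1/\varepsilon$ supported on a set of measure $O(\varepsilon)$) combine to yield a right-hand side of order $\sqrt{\varepsilon}\,\|\nabla\widetilde{r}^\varepsilon\|_{L^2}$, so that a Young-inequality absorption gives $\|\widetilde{r}^\varepsilon\|_{H^1} \leq C\sqrt{\varepsilon}$. Finally, I would recover the estimate on $\varepsilon\theta_0^\varepsilon = r^\varepsilon$ itself (without the cutoff) by noting that $r^\varepsilon - \widetilde{r}^\varepsilon = \varepsilon \sum_i (1-m_\varepsilon)\wper_{e_i}(\cdot/\varepsilon)\partial_i u_0^\star$ is again supported in the boundary layer and has $H^1$ norm of order $\sqrt{\varepsilon}$, so the triangle inequality yields~\eqref{restepsu0}. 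Since this is a restatement of a classical result (cited from~\cite[p.~28]{Jikov1994}), I would keep the argument brief and refer to the literature for the routine parts, emphasizing only that the hypotheses $\wper_p \in W^{1,\infty}(\RR^d)$ and $u_0^\star \in W^{2,\infty}(\mathcal{D})$ are exactly what make $G^\varepsilon$ bounded and the boundary-layer estimates valid.
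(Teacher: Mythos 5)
Your overall architecture --- write the equation for the remainder, cut off near the boundary, estimate the boundary-layer terms on a strip of measure $O(\varepsilon)$, absorb, and conclude by a triangle inequality --- is indeed the classical scheme that the paper invokes (it does not reprove Proposition~\ref{the:u0eps}, citing~\cite[p.~28]{Jikov1994}; its proof of the analogous Proposition~\ref{theo:restubar1} follows exactly this pattern). However, there is a genuine gap in your interior estimate. The corrector equation~\eqref{PB:w0} cancels only the $O(\varepsilon^{-1})$ contributions. At order $\varepsilon^0$, setting $r^\varepsilon=u_0^\varepsilon-v_0^\varepsilon$, the computation gives
\begin{equation*}
-\mbox{div}\left[A_{per}\left(\frac{\cdot}{\varepsilon}\right)\nabla r^\varepsilon\right]
=
\mbox{div}\left[\sum_{i=1}^d Z_i\left(\frac{\cdot}{\varepsilon}\right)\partial_i u_0^\star\right]
+\varepsilon \, \mbox{div}\left[A_{per}\left(\frac{\cdot}{\varepsilon}\right)\sum_{i=1}^d \wper_{e_i}\left(\frac{\cdot}{\varepsilon}\right)\nabla\partial_i u_0^\star\right],
\qquad
Z_i=A_{per}\left(e_i+\nabla \wper_{e_i}\right)-A^\star_{per}e_i.
\end{equation*}
The second term carries an explicit factor $\varepsilon$ and is harmless, as you say. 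The first does not: the definition~\eqref{def:A0star} of $A^\star_{per}$ makes the \emph{cell average} of $Z_i$ vanish, not $Z_i$ itself, so $Z_i(\cdot/\varepsilon)\,\partial_i u_0^\star$ is $O(1)$ in $L^\infty$ and in $L^2$, uniformly in $\varepsilon$. Your stated ``key structural point'' --- that $G^\varepsilon$ is bounded in $L^\infty$ uniformly in $\varepsilon$ --- therefore only yields $\left|\int_{\mathcal D} G^\varepsilon\cdot\nabla\widetilde{r}^\varepsilon\right|\le C\|\nabla\widetilde{r}^\varepsilon\|_{L^2(\mathcal D)}$, hence $\|\widetilde{r}^\varepsilon\|_{H^1(\mathcal D)}\le C$: an $O(1)$ bound. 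The right-hand side of order $\sqrt{\varepsilon}\,\|\nabla\widetilde{r}^\varepsilon\|_{L^2}$ that you assert never materializes from boundedness alone.

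The missing idea is the compensated-compactness / vector-potential step. Since $Z_i$ is $Q$-periodic, divergence free (by~\eqref{PB:w0}) and of vanishing mean (by~\eqref{def:A0star}), it admits a skew-symmetric periodic potential $J$ with $(Z_i)_j=\sum_l \partial_l J_{lj}$, and this upgrades the oscillatory term from $O(1)$ in $L^2$ to $O(\varepsilon)$ in $H^{-1}$:
$\left\|\mbox{div}\left[Z_i\left(\frac{\cdot}{\varepsilon}\right) v\right]\right\|_{H^{-1}(\mathcal D)}\le C\varepsilon\|\nabla v\|_{L^\infty(\mathcal D)}$.
This is exactly Lemma~\ref{lem3} of the paper (taken from~\cite[p.~27]{Jikov1994}), applied with $v=\partial_i u_0^\star$; it is here, and only here, that the hypothesis $u_0^\star\in W^{2,\infty}(\mathcal D)$ is genuinely used on the oscillatory term. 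Once this lemma is inserted, the interior residual is $O(\varepsilon)$ in $H^{-1}(\mathcal D)$, and the rest of your argument --- the cutoff $m_\varepsilon$ with $\varepsilon\|\nabla m_\varepsilon\|_{L^\infty}\le C$, the strip estimates producing $\sqrt{\varepsilon}$, the Young-inequality absorption, and the recovery of $\varepsilon\theta_0^\varepsilon$ from $\widetilde{r}^\varepsilon$ --- is sound and coincides, step by step, with the paper's Steps 1--3 in the proof of Proposition~\ref{theo:restubar1}.
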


We recall that, under assumption~\eqref{hyp:a_holder}, we indeed have that 
$\wper_p \in W^{1,\infty}(\RR^d)$ for any $p \in \RR^d$ (see
e.g.~\cite[Theorem 8.22 and Corollary~8.36]{gilbarg-trudinger}). 

\bigskip

We now turn to $\overline{u}_1^\varepsilon$ solution
to~\eqref{PB:u1bareps}. This problem is not a classical homogenization
problem, since its right-hand side also varies at the scale $\eps$, and
only {\em weakly} converges in $H^{-1}({\cal D})$ when $\eps \to 0$. We
first proceed formally, using the two-scale ansatz approach, to identify
the homogenized equation. We next
state a precise homogenization result, and finally evaluate the rate of
convergence of the two scale expansion. 

To derive formally the homogenized equation associated
to~\eqref{PB:u1bareps}, we make the classical two-scale ansatz 
$$
\overline{u}^\varepsilon_1(x)
=
\overline{u}^\star_1\left(x,\frac{x}{\eps}\right) + 
\varepsilon \overline{u}^1_1\left(x,\frac{x}{\eps}\right) 
+ \varepsilon^2 \overline{u}^2_1\left(x,\frac{x}{\eps}\right) + \cdots,
$$
where each term of the above expansion is assumed to be periodic with
respect to the second variable. Inserting this ansatz
in~\eqref{PB:u1bareps} and using the two scale
expansion~\eqref{tse:u0eps} of $u_0^\varepsilon$ (where we neglect the
remainder $\eps \theta_0^\eps$), we can easily derive a
hierarchy of equations. We deduce from the equation of order $\eps^{-2}$
that $\overline{u}_1^\star$ is independent of its second variable:
$\overline{u}_1^\star(x,y) \equiv \overline{u}_1^\star(x)$. The equation
of order $\eps^{-1}$ reads
$$
-\hbox{div}_y \left[ A_{per}(y) \left( \nabla_x \overline{u}^\star_1(x)
    + \nabla_y \overline{u}^1_1(x,y) \right) \right] 
=
\sum\limits_{i=1}^d \partial_i u_0^\star(x) \ \hbox{div}_y \left[
  B_{per}(y) \left(e_i + \nabla_y \wper_{e_i}(y) \right) \right]. 
$$
Using the functions $\wper_p$ and $\psi_p$ defined by~\eqref{PB:w0}
and~\eqref{eq:psi_p}, we thus see that
\begin{equation}
\label{tse:eq1}
\overline{u}^1_1(x,y) 
=
\tau(x) + 
\sum\limits_{i=1}^d \partial_i u_0^\star(x) \psi_{e_i}(y) 
+ \partial_i \overline{u}_1^\star(x) \wper_{e_i}(y),
\end{equation}
where $\tau$ is an undetermined function that only depends on $x$.
We are now in position to use the equation of order $\eps^0$, which
reads (recall we have neglected the remainder $\eps \theta_0^\eps$
in~\eqref{tse:u0eps}) 
\begin{multline*}
-\hbox{div}_x \left[ A_{per}(y) \left( \nabla_x \overline{u}^\star_1(x)
    + \nabla_y \overline{u}^1_1(x,y) \right) \right] - \hbox{div}_y
\left[ A_{per}(y) \left( \nabla_x \overline{u}^1_1(x,y) + \nabla_y
    \overline{u}_1^2(x,y) \right) \right]  
\\
=
\sum\limits_{i=1}^d \hbox{div}_x \left[ B_{per}(y) 
\left(e_i + \nabla_y \wper_{e_i}(y)\right) \partial_i u_0^\star \right]
+ \hbox{div}_y
\left[ 
\sum_{i=1}^d \wper_{e_i}(y) B_{per}(y) \nabla_x \partial_i u^\star_0(x) \right].  
\end{multline*}
%
We close the hierarchy by integrating the above equation over the
variable $y \in Q$, using that $y \mapsto \overline{u}_1^2(x,y)$ is
$Q$-periodic. Using~\eqref{tse:eq1} and the
expression~\eqref{def:A0star}, we then obtain
that $\overline{u}^\star_1$ satisfies
\begin{equation}
\label{PB:u1barstar_pre}
\left\{ 
\begin{array}{l l}
-\hbox{div}\left[A^\star_{per} \nabla \overline{u}_1^\star \right]
= 
\hbox{div}\left[\widetilde{B} \nabla u_0^\star \right] 
& \text{ in }\mathcal{D}, 
\\
\overline{u}_1^\star=0 & \text{ on }\partial\mathcal{D},
\end{array}
\right.
\end{equation}
with
\begin{equation}
\label{def:overlineB_pre}
\forall 1 \leq i,j \leq d, \quad
\widetilde{B}_{ij} = 
\int_Q e_i^T A_{per} \nabla \psi_{e_j}
+
\int_Q e_i^T B_{per} (e_j + \nabla \wper_{e_j}).
\end{equation}
Mutiplying~\eqref{eq:psi_p} (for $p=e_j$) by $\wper_{e_i}$ and
integrating over $Q$, we find that 
$$
\forall 1 \leq i,j \leq d, \quad
\int_Q (\nabla \wper_{e_i})^T A_{per} \nabla \psi_{e_j}
=
-\int_Q (\nabla \wper_{e_i})^T B_{per} (e_j + \nabla \wper_{e_j}).
$$
Inserting this relation in~\eqref{def:overlineB_pre}, we deduce that the
matrix $\widetilde{B}$ is equal to the matrix $\overline{B}$ defined
by~\eqref{def:overlineB}. We hence deduce from~\eqref{PB:u1barstar_pre}
that $\overline{u}^\star_1$ indeed satisfies~\eqref{PB:u1barstar}. 

These formal computations are formalized in a rigorous way in the
following Propositions:
\begin{prop}
\label{theo:u1bar0}
Assume that, for any $p \in \RR^d$, the corrector $\wper_p$ solution
to~\eqref{PB:w0} satisfies $\wper_p \in W^{1,\infty}(\RR^d)$, and that
the solution $u_0^\star$ to~\eqref{eq:homog-0} satisfies $u_0^\star \in
W^{2,\infty}({\cal D})$. 
Then the function $\overline{u}_1^\varepsilon$ solution
to~\eqref{PB:u1bareps} converges, weakly in $H^1(\mathcal{D})$ and
strongly in $L^2(\mathcal{D})$, to the unique solution
$\overline{u}_1^\star$ to~\eqref{PB:u1barstar}.
\end{prop}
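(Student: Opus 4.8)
The plan is to prove this homogenization result by Tartar's oscillating test function method (the energy method), combined with the div-curl lemma. First I would derive the a priori bound: testing~\eqref{PB:u1bareps} against $\overline{u}_1^\varepsilon$ itself and using the coercivity of $A_{per}$, the boundedness of $B_{per}$ and the uniform bound $\|u_0^\varepsilon\|_{H^1(\mathcal{D})}\le C$ from~\eqref{ape-1}, one obtains $\|\overline{u}_1^\varepsilon\|_{H^1(\mathcal{D})}\le C$. Hence, up to extraction of a subsequence, $\overline{u}_1^\varepsilon\rightharpoonup z$ weakly in $H^1(\mathcal{D})$ and strongly in $L^2(\mathcal{D})$ (Rellich), with $z\in H^1_0(\mathcal{D})$. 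Writing the equation in divergence form, the flux $\Sigma_\varepsilon := A_{per}(\cdot/\varepsilon)\nabla\overline{u}_1^\varepsilon + B_{per}(\cdot/\varepsilon)\nabla u_0^\varepsilon$ satisfies $\mbox{div}\,\Sigma_\varepsilon = 0$ and is bounded in $L^2$, so $\Sigma_\varepsilon\rightharpoonup\Sigma$ with $\mbox{div}\,\Sigma=0$. The whole point is to identify $\Sigma$; once the limit is characterized uniquely, the full sequence will converge.

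Next I would introduce, for an arbitrary $\varphi\in C^\infty_c(\mathcal{D})$, the oscillating test function $w_\varepsilon = \varphi + \varepsilon\sum_{j=1}^d\partial_j\varphi\,\wper_{e_j}(\cdot/\varepsilon)$, which lies in $H^1_0(\mathcal{D})$ (using $\wper_{e_j}\in W^{1,\infty}$ and $\varphi$ compactly supported) and satisfies $\nabla w_\varepsilon\rightharpoonup\nabla\varphi$ weakly in $L^2$. Since $A_{per}$ is symmetric, the correctors of the adjoint operator coincide with $\wper$. The key properties are: (i) the flux $A_{per}(\cdot/\varepsilon)\nabla w_\varepsilon$ converges weakly in $L^2$ to $A^\star_{per}\nabla\varphi$, because the periodic field $A_{per}(e_j+\nabla\wper_{e_j})$ has average $A^\star_{per}e_j$ (a consequence of~\eqref{def:A0star} and of~\eqref{PB:w0}); and (ii) $\mbox{div}[A_{per}(\cdot/\varepsilon)\nabla w_\varepsilon]$ is relatively compact in $H^{-1}(\mathcal{D})$, because the corrector flux $y\mapsto A_{per}(y)(e_j+\nabla\wper_{e_j}(y))$ is divergence-free, so the singular $\varepsilon^{-1}$ contribution cancels and only an $L^2$-bounded remainder survives.

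I would then compute $\lim_{\varepsilon\to 0}\Sigma_\varepsilon\cdot\nabla w_\varepsilon$ in $\mathcal{D}'(\mathcal{D})$ in two ways. On one hand, since $\mbox{div}\,\Sigma_\varepsilon=0$ and $\mbox{curl}\,\nabla w_\varepsilon=0$, the div-curl lemma gives $\Sigma_\varepsilon\cdot\nabla w_\varepsilon\rightharpoonup\Sigma\cdot\nabla\varphi$. On the other hand, I split $\Sigma_\varepsilon\cdot\nabla w_\varepsilon = (\nabla\overline{u}_1^\varepsilon)^TA_{per}(\cdot/\varepsilon)\nabla w_\varepsilon + (\nabla w_\varepsilon)^TB_{per}(\cdot/\varepsilon)\nabla u_0^\varepsilon$. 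For the first product, the div-curl lemma (applied to the curl-free $\nabla\overline{u}_1^\varepsilon$ and the div-compact flux of $w_\varepsilon$) together with (i)--(ii) yields the limit $(A^\star_{per}\nabla z)\cdot\nabla\varphi$. Equating the two expressions for the limit and letting $\nabla\varphi$ span every direction at a.e. point gives $\Sigma = A^\star_{per}\nabla z + \overline{B}\nabla u_0^\star$; combined with $\mbox{div}\,\Sigma=0$ this is exactly~\eqref{PB:u1barstar}, and uniqueness for~\eqref{PB:u1barstar} (Lax--Milgram, $A^\star_{per}$ coercive) forces $z=\overline{u}_1^\star$ and upgrades the subsequential convergence to the full sequence.

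The main obstacle is the second product $(\nabla w_\varepsilon)^TB_{per}(\cdot/\varepsilon)\nabla u_0^\varepsilon$: both factors converge only weakly, so one cannot pass to the limit by naively multiplying weak limits, and the div-curl lemma does not obviously apply since the compactness of $\mbox{div}[B_{per}(\cdot/\varepsilon)\nabla u_0^\varepsilon]$ in $H^{-1}$ is unclear. The remedy, which is the crux of the proof, is to inject the explicit two-scale structure of $\nabla u_0^\varepsilon$ provided by Proposition~\ref{the:u0eps}, namely $\nabla u_0^\varepsilon = \sum_i(e_i+\nabla\wper_{e_i}(\cdot/\varepsilon))\partial_iu_0^\star + \rho_\varepsilon$ with $\|\rho_\varepsilon\|_{L^2(\mathcal{D})}\to 0$ (here $u_0^\star\in W^{2,\infty}$ is used), together with the analogous explicit form of $\nabla w_\varepsilon$. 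Modulo terms vanishing strongly in $L^2$, the product then reduces to $\sum_{i,j}\partial_j\varphi\,\partial_iu_0^\star$ times the periodic scalar field $(e_j+\nabla\wper_{e_j})^TB_{per}(e_i+\nabla\wper_{e_i})(\cdot/\varepsilon)$, whose average is precisely $\overline{B}_{ji}$ by~\eqref{def:overlineB}. Weak-$\ast$ convergence of this bounded periodic field (where $\wper_{e_i}\in W^{1,\infty}$ is essential) yields $(\nabla\varphi)^T\overline{B}\nabla u_0^\star$, which supplies the missing $\overline{B}\nabla u_0^\star$ term and explains why the homogenized right-hand side features exactly the matrix $\overline{B}$.
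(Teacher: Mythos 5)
Your proposal is correct and is essentially the paper's own proof: the paper likewise runs Tartar's oscillating test function method with the identical test function $v^\varepsilon=\varphi+\varepsilon\sum_{j}\wper_{e_j}(\cdot/\varepsilon)\,\partial_j\varphi$, obtains the same a priori bound and subsequence extraction, and treats the problematic term $(\nabla v^\varepsilon)^T B_{per}(\cdot/\varepsilon)\nabla u_0^\varepsilon$ exactly as you do, by substituting the two-scale expansion of Proposition~\ref{the:u0eps} (discarding the $O(\sqrt{\varepsilon})$ remainder $\varepsilon\theta_0^\varepsilon$) and passing to the weak-$\star$ $L^\infty$ limit of the periodic field $(e_i+\nabla\wper_{e_i})^T B_{per}(e_j+\nabla\wper_{e_j})$, which is how $\overline{B}$ emerges. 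The only difference is presentational: where you invoke the div--curl lemma twice (for $\Sigma_\varepsilon\cdot\nabla w_\varepsilon$ and for $(\nabla\overline{u}_1^\varepsilon)^T A_{per}(\cdot/\varepsilon)\nabla w_\varepsilon$) and then identify the limiting flux, the paper carries out the equivalent compensated-compactness computation by hand, integrating by parts against the divergence-free corrector flux $A_{per}(\cdot/\varepsilon)\left(e_i+\nabla\wper_{e_i}(\cdot/\varepsilon)\right)$ and pairing the strong $L^2$ convergence of $\overline{u}_1^\varepsilon$ with the weak-$\star$ convergence of that flux to $A^\star_{per}e_i$, before concluding by uniqueness for~\eqref{PB:u1barstar} just as you do.
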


The regularity assumptions on $\wper_p$ and $u_0^\star$ ensure that
$\nabla u_0^\eps$ in the right-hand side of~\eqref{PB:u1bareps} can be
controlled in the appropriate norm. 

\begin{prop}
\label{theo:restubar1}
Let $\overline{u}_1^\varepsilon$ be the solution to~\eqref{PB:u1bareps},
$\overline{u}_1^\star$ be the solution to~\eqref{PB:u1barstar}
and $u_0^\star$ be the solution to~\eqref{eq:homog-0}. For any $p \in
\RR^d$, let $\wper_p$ be the solution to~\eqref{PB:w0} and
$\psi_p$ be the solution to~\eqref{eq:psi_p}.

Introduce $\overline{v}_1^\varepsilon$ defined by
$$
\overline{v}_1^\varepsilon = \overline{u}_1^\star + \varepsilon
\sum\limits_{i=1}^d \left( \wper_{e_i}\left(\frac{\cdot}{\varepsilon}\right)
  \partial_i \overline{u}_1^\star +
  \psi_{e_i}\left(\frac{\cdot}{\varepsilon}\right) \partial_i u_0^\star
\right),
$$
and assume that $u_0^\star \in W^{2,\infty}(\mathcal{D})$,
$\overline{u}_1^\star \in W^{2,\infty}(\mathcal{D})$, and that, for any
$p \in \RR^d$, we have
$\wper_p \in W^{1,\infty}(\RR^d)$ and
$\psi_p \in W^{1,\infty}(\RR^d)$. 
We then have
$$
\|\overline{u}_1^\varepsilon -
\overline{v}_1^\varepsilon\|_{H^1(\mathcal{D})} 
\leq C \sqrt{\varepsilon} 
$$
for a constant $C$ independent of $\varepsilon$. 
\end{prop}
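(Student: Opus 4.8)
The plan is to compare the fluxes associated with $\overline{u}_1^\eps$ and with its two-scale expansion $\overline{v}_1^\eps$, and to reduce the estimate to an energy estimate for a function lying in $H^1_0(\mathcal D)$. Since $\overline{v}_1^\eps$ does not vanish on $\partial\mathcal D$ (only its leading term $\overline{u}_1^\star$ does), the difference $\overline{u}_1^\eps-\overline{v}_1^\eps$ is not in $H^1_0(\mathcal D)$, and this boundary mismatch is precisely what will produce the rate $\sqrt\eps$ rather than $\eps$. First I would introduce a cut-off function $\rho_\eps$, equal to $0$ in an $\eps$-neighbourhood of $\partial\mathcal D$ and to $1$ at distance $\ge 2\eps$, with $|\nabla\rho_\eps|\le C/\eps$, and set $\hat v_1^\eps=\overline{u}_1^\star+\eps\,\rho_\eps\sum_{i}\left[\wper_{e_i}(\cdot/\eps)\,\partial_i\overline{u}_1^\star+\psi_{e_i}(\cdot/\eps)\,\partial_i u_0^\star\right]$, which belongs to $H^1_0(\mathcal D)$. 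The difference $\overline{v}_1^\eps-\hat v_1^\eps$ is supported in the layer $\{d(\cdot,\partial\mathcal D)\le 2\eps\}$, whose measure is $O(\eps)$; since $\wper_{e_i},\psi_{e_i}\in W^{1,\infty}$ and $u_0^\star,\overline{u}_1^\star\in W^{2,\infty}$, both the corrector terms and the terms where $\nabla$ hits $\rho_\eps$ are $O(1)$ pointwise, so that $\|\overline{v}_1^\eps-\hat v_1^\eps\|_{H^1(\mathcal D)}\le C\sqrt\eps$. It then suffices to prove $\|\overline{u}_1^\eps-\hat v_1^\eps\|_{H^1(\mathcal D)}\le C\sqrt\eps$.

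For the latter I would compute the residual flux of $\hat v_1^\eps$. Using Proposition~\ref{the:u0eps}, I would substitute $\nabla u_0^\eps=\sum_i\big(e_i+\nabla\wper_{e_i}(\cdot/\eps)\big)\partial_i u_0^\star+\eps\sum_i\wper_{e_i}(\cdot/\eps)\,\nabla\partial_i u_0^\star+\nabla(\eps\theta_0^\eps)$ in the right-hand side of~\eqref{PB:u1bareps}, and differentiate $\hat v_1^\eps$. Then the flux $A_{per}(\cdot/\eps)\nabla\hat v_1^\eps+B_{per}(\cdot/\eps)\nabla u_0^\eps$, minus the divergence-free homogenized flux $A^\star_{per}\nabla\overline{u}_1^\star+\overline{B}\,\nabla u_0^\star$ (divergence free by~\eqref{PB:u1barstar}), decomposes into: the main oscillating terms $\rho_\eps\sum_i M_i(\cdot/\eps)\,\partial_i\overline{u}_1^\star$ with $M_i(y)=A_{per}(y)(e_i+\nabla\wper_{e_i}(y))-A^\star_{per}e_i$, and $\rho_\eps\sum_i N_i(\cdot/\eps)\,\partial_i u_0^\star$ with $N_i(y)=A_{per}(y)\nabla\psi_{e_i}(y)+B_{per}(y)(e_i+\nabla\wper_{e_i}(y))-\overline{B}\,e_i$; terms carrying an explicit factor $\eps$, bounded in $L^2$ by the $W^{2,\infty}$ regularity of $u_0^\star,\overline{u}_1^\star$ and the $W^{1,\infty}$ regularity of the correctors; the term $B_{per}(\cdot/\eps)\nabla(\eps\theta_0^\eps)$, whose $L^2$ norm is $\le C\sqrt\eps$ by~\eqref{restepsu0}; and commutator terms involving $\nabla\rho_\eps$, supported in the boundary layer.

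The crucial point is that both $M_i$ and $N_i$ are $Q$-periodic, have zero average over $Q$, and are divergence free in $y$: for $M_i$ this follows from the corrector equation~\eqref{PB:w0} and the definition~\eqref{def:A0star} of $A^\star_{per}$, while for $N_i$ it follows precisely from the equation~\eqref{eq:psi_p} defining $\psi_{e_i}$ together with the definition~\eqref{def:overlineB} of $\overline{B}$ (this is exactly the algebra already carried out around~\eqref{def:overlineB_pre}). Consequently each of $M_i$ and $N_i$ can be written as $\hbox{div}_y$ of a $Q$-periodic skew-symmetric matrix field (a flux corrector), which lies in $L^2(Q)$. Testing the residual against an arbitrary $\varphi\in H^1_0(\mathcal D)$, and noting that the main terms carry the factor $\rho_\eps$ and are therefore compactly supported in $\mathcal D$, the flux-corrector integration by parts produces no boundary term; the skew-symmetry annihilates the contraction against the Hessian of $\varphi$, so that, e.g., $\big|\int_{\mathcal D}\rho_\eps M_i(\cdot/\eps)\,\partial_i\overline{u}_1^\star\cdot\nabla\varphi\big|\le C\eps\,\|\varphi\|_{H^1(\mathcal D)}$ (using $\nabla\partial_i\overline{u}_1^\star,\nabla\partial_i u_0^\star\in L^\infty$), up to further $\nabla\rho_\eps$ contributions confined to the $O(\eps)$-measure layer, hence $O(\sqrt\eps)$. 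Collecting all contributions shows that the residual is $O(\sqrt\eps)$ in $H^{-1}(\mathcal D)$. Since $\overline{u}_1^\eps-\hat v_1^\eps\in H^1_0(\mathcal D)$, testing the equation it solves against itself and using the coercivity of $A_{per}$ yields $\|\overline{u}_1^\eps-\hat v_1^\eps\|_{H^1(\mathcal D)}\le C\sqrt\eps$, and the triangle inequality concludes.

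The main obstacle is the boundary-layer bookkeeping: one must check that introducing $\rho_\eps$ (unavoidable to land in $H^1_0$) costs exactly $\sqrt\eps$ and no more, and that the additional commutator terms it generates stay confined to the layer. The second delicate point, of a more algebraic nature, is to recognise that the combined flux $N_i$ attached to the oscillating right-hand side is mean zero and divergence free --- this is exactly where the specific choice~\eqref{eq:psi_p} of the corrector $\psi$ and the identity leading to~\eqref{def:overlineB} are essential --- so that a flux corrector exists and the a priori $O(1)$-looking term is in fact $O(\eps)$.
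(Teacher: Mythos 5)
Your proposal is correct and follows essentially the same route as the paper's proof: truncation of the two-scale expansion near $\partial\mathcal{D}$ at a cost of $C\sqrt{\varepsilon}$, decomposition of the residual flux into the $Q$-periodic, mean-zero, divergence-free fields $M_i$ and $N_i$ (the paper's $Z$ and $\overline{Z}$, with the same algebraic verification via~\eqref{eq:psi_p} and~\eqref{def:overlineB} that $N_i$ has vanishing mean), handled through a skew-symmetric flux corrector exactly as in the paper's Lemma~\ref{lem3}, followed by an energy estimate using the coercivity of $A_{per}$. The only cosmetic difference is that you insert the cut-off $\rho_\varepsilon$ into the residual flux and track the $\nabla\rho_\varepsilon$ commutator terms in the boundary layer, whereas the paper estimates the untruncated flux difference in $H^{-1}(\mathcal{D})$ and accounts for the truncation error as a separate additive term in the energy inequality; both bookkeeping choices give the same $O(\sqrt{\varepsilon})$ bound.
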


Again, under assumptions~\eqref{hyp:a_holder}
and~\eqref{hyp:b_holder}, we have  
$\wper_p \in W^{1,\infty}(\RR^d)$ and $\psi_p \in W^{1,\infty}(\RR^d)$
for any $p \in \RR^d$ (see
e.g.~\cite[Theorem 8.22 and Corollary~8.36]{gilbarg-trudinger}). 

\bigskip

We finally turn to $\phi_k^\varepsilon$ solution to~\eqref{PB:phikeps},
namely 
$$
\left\{ 
\begin{array}{l l}
\dps
-\hbox{div} \left[ A_{per} \left(\frac{\cdot}{\varepsilon}\right) 
\nabla \phi_k^\varepsilon \right]
=
\hbox{div} \left[ c_k^\eps \right] 
& \text{ in $\mathcal{D}$}, 
\\
\phi_k^\varepsilon=0 & \text{ on $\partial\mathcal{D}$},
\end{array}
\right.
$$
with
$$
c_k^\eps(x) = \mathbf{1}_{Q+k}\left(\frac{\cdot}{\varepsilon}\right)
  B_{per}\left(\frac{\cdot}{\varepsilon}\right) \nabla u_0^\varepsilon.
$$
Assume momentarily that the sequence $\nabla u_0^\varepsilon$ is bounded
in $L^\infty({\cal D})$ (we have proved such a bound above,
see~\eqref{eq:bound_u_dur}, under the strong
assumptions~\eqref{hyp:regul_f} and~\eqref{hyp:a_holder}). Then, for any
$k \in \ZZ^d$, $c_k^\eps$ converges to 0 in $L^2({\cal D})$.
Using the coercivity of $A_{per}$, this implies that
$\phi_k^\eps$ converges to 0 in $H^1({\cal D})$. We thus have the
following result, which will be rigourously proved in
Section~\ref{2scale:proofs} below:

\begin{prop}
\label{theo:phikeps}
Let $\phi_k^\varepsilon$ be the solution to~\eqref{PB:phikeps}, and let
$u_0^\star$ and $\wper_p$ be the solutions to~\eqref{eq:homog-0}
and~\eqref{PB:w0}. 
Assume that $u_0^\star \in W^{2,\infty}(\mathcal{D})$ and that, for any
$p \in \RR^d$, we have $\wper_p \in W^{1,\infty}(\RR^d)$. 
Then $\phi_k^\varepsilon$ converges to 0 in $H^1(\mathcal{D})$.
\end{prop}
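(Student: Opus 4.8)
The plan is to reduce the statement to a single energy estimate for \eqref{PB:phikeps} and then exploit the fact that its right-hand side is supported on a cube whose size shrinks with $\eps$. Writing the weak formulation of \eqref{PB:phikeps} and testing it against $\phi_k^\eps$ itself (exactly as in the energy estimate already carried out in the proof of Lemma~\ref{lem:decomposition} for \eqref{eq:phi_estim1}), the coercivity of $A_{per}$ together with the Poincar\'e inequality on $\mathcal{D}$ yields, for a constant $C$ independent of $\eps$,
\[
\|\phi_k^\eps\|_{H^1(\mathcal{D})} \leq C \, \|c_k^\eps\|_{L^2(\mathcal{D})}, \qquad c_k^\eps = \mathbf{1}_{Q+k}\!\left(\frac{\cdot}{\eps}\right) B_{per}\!\left(\frac{\cdot}{\eps}\right) \nabla u_0^\eps .
\]
It therefore suffices to prove that $\|c_k^\eps\|_{L^2(\mathcal{D})} \to 0$ as $\eps \to 0$, for each fixed $k \in \ZZ^d$.

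The source $c_k^\eps$ is supported in $E_\eps := \mathcal{D} \cap \eps(Q+k)$, a set of measure at most $\eps^d$, and since $B_{per} \in L^\infty$ we get $\|c_k^\eps\|_{L^2(\mathcal{D})}^2 \leq \|B_{per}\|_{L^\infty}^2 \, \|\nabla u_0^\eps\|_{L^2(E_\eps)}^2$. The whole question thus reduces to showing $\|\nabla u_0^\eps\|_{L^2(E_\eps)} \to 0$. If one is willing to invoke the $L^\infty$ bound \eqref{eq:bound_u_dur} on $\nabla u_0^\eps$ (available here, since $u_0^\star \in W^{2,\infty}(\mathcal{D})$ forces, via \eqref{eq:homog-0}, $f \in L^\infty(\mathcal{D})$ and hence \eqref{hyp:regul_f}), the conclusion is immediate: $\|\nabla u_0^\eps\|_{L^2(E_\eps)} \leq \|\nabla u_0^\eps\|_{L^\infty(\mathcal{D})} \, |E_\eps|^{1/2} \leq C \eps^{d/2} \to 0$, which already closes the argument.

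To rely only on the hypotheses stated in the Proposition ($u_0^\star \in W^{2,\infty}(\mathcal{D})$ and $\wper_p \in W^{1,\infty}(\RR^d)$), I would instead insert the two-scale expansion \eqref{tse:u0eps} of $u_0^\eps$ provided by Proposition~\ref{the:u0eps} and estimate $\nabla u_0^\eps$ term by term on $E_\eps$. Differentiating \eqref{tse:u0eps} gives
\[
\nabla u_0^\eps = \nabla u_0^\star + \sum_{i=1}^d (\nabla \wper_{e_i})\!\left(\frac{\cdot}{\eps}\right) \partial_i u_0^\star + \eps \sum_{i=1}^d \wper_{e_i}\!\left(\frac{\cdot}{\eps}\right) \nabla \partial_i u_0^\star + \nabla(\eps \theta_0^\eps).
\]
The first three terms are bounded in $L^\infty(\mathcal{D})$ (using $u_0^\star \in W^{2,\infty}$ and $\wper_p \in W^{1,\infty}$), so their $L^2(E_\eps)$-norms are each $\leq C |E_\eps|^{1/2} \leq C \eps^{d/2} \to 0$.

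The only term escaping pointwise control is the remainder $\nabla(\eps \theta_0^\eps)$, and this is the one genuinely delicate point: one cannot bound it on the small set $E_\eps$ through the measure of $E_\eps$, since no $L^\infty$ estimate on $\eps\theta_0^\eps$ is available. However, this is circumvented by the \emph{global} bound \eqref{restepsu0}, namely $\|\nabla(\eps\theta_0^\eps)\|_{L^2(\mathcal{D})} \leq C \sqrt{\eps}$; since $E_\eps \subset \mathcal{D}$, this already yields $\|\nabla(\eps\theta_0^\eps)\|_{L^2(E_\eps)} \leq C\sqrt{\eps} \to 0$. Collecting the four contributions gives $\|\nabla u_0^\eps\|_{L^2(E_\eps)} \to 0$, hence $\|c_k^\eps\|_{L^2(\mathcal{D})} \to 0$, and the energy estimate of the first paragraph then forces $\phi_k^\eps \to 0$ in $H^1(\mathcal{D})$. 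Beyond this bookkeeping the result presents no real difficulty: the essential mechanism is simply that the source term of \eqref{PB:phikeps} is concentrated on a cube of vanishing volume while $\nabla u_0^\eps$ remains controlled in $L^2$.
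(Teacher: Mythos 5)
Your proposal is correct and follows essentially the same route as the paper's proof: the energy estimate $\|\phi_k^\eps\|_{H^1(\mathcal{D})} \leq C\,\|c_k^\eps\|_{L^2(\mathcal{D})}$, followed by splitting $\nabla u_0^\eps$ via the two-scale expansion \eqref{tse:u0eps} into an $L^\infty$-bounded part, whose $L^2$ norm on the cube $\eps(Q+k)$ is controlled by the measure $\eps^d$, and the remainder $\eps\nabla\theta_0^\eps$, controlled globally by \eqref{restepsu0} --- exactly the decomposition $\nabla u_0^\eps = T^\eps + \eps\nabla\theta_0^\eps$ used in the paper. Your preliminary aside invoking \eqref{eq:bound_u_dur} would additionally require the H\"older assumption \eqref{hyp:a_holder} (needed for the Green function bound behind that estimate), which is not among the Proposition's hypotheses, but you rightly set that route aside in favor of the self-contained argument.
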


To describe more precisely the behavior of $\phi_k^\varepsilon$, we 
need to introduce the auxilliary function $\chi_p$ defined by~\eqref{PB:chip}
below. Recall first that $Q=(-1/2,1/2)^d$.
Following the same arguments as in~\cite[Lemma~$4$]{Blanc2010}, we
have the following result, which will be useful in the sequel.

\begin{lemme}
\label{lem4}
For any $p\in \RR^d$, the problem
\begin{equation}
\label{PB:chip}
\left\{ 
\begin{array}{l l}
-\hbox{div}\left[A_{per} \nabla \chi_{p} \right]=
\hbox{div}\left[\mathbf{1}_Q B_{per}(p + \nabla \wper_p)\right] & \text{ in
  $\RR^d$}, 
\\
\chi_{p} \in L^2_{loc}(\RR^d), \quad 
\nabla \chi_{p} \in \left(L^2(\RR^d)\right)^d,
\end{array}
\right.
\end{equation}
has a solution which is unique up to the addition of a
constant. In addition, under assumption~\eqref{hyp:a_holder}, there
exists a solution of~\eqref{PB:chip} and a constant $C>0$ such that 
\begin{eqnarray}
\forall x \in \RR^d \text{ with }|x| \geq 1, \quad
|\nabla \chi_p| &\leq& \frac{C}{|x|^d},
\label{res:lem4}
\\
\forall x \in \RR^d, \quad 
|\chi_p| & \leq & \frac{C}{1+|x|^{d-1}}.
\label{res-b:lem4}
\end{eqnarray}
In the sequel, we will always refer to that particular solution
of~\eqref{PB:chip}. 
\end{lemme}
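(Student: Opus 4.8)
The plan is to separate the existence/uniqueness question from the decay estimates. First I would set $g := \mathbf{1}_Q B_{per}(p + \nabla \wper_p)$, which, thanks to $\wper_p \in W^{1,\infty}(\RR^d)$ under~\eqref{hyp:a_holder} (indeed $\nabla \wper_p \in L^2_{loc}$ already suffices since $\mathbf{1}_Q$ truncates), is a compactly supported field in $(L^2(\RR^d))^d$, and rewrite~\eqref{PB:chip} as $-\hbox{div}[A_{per} \nabla \chi_p] = \hbox{div}[g]$. For existence and uniqueness I would work in the homogeneous Sobolev (Beppo--Levi) space $V = \{ v \in L^2_{loc}(\RR^d) : \nabla v \in (L^2(\RR^d))^d \}$ taken modulo additive constants and equipped with the norm $\|\nabla v\|_{L^2(\RR^d)}$. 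The bilinear form $a(u,v) = \int_{\RR^d} (\nabla v)^T A_{per} \nabla u$ is continuous and, by uniform ellipticity of $A_{per}$, coercive on $V$, while $v \mapsto - \int_{\RR^d} (\nabla v)^T g$ is a continuous linear form. Lax--Milgram then yields a unique solution in $V$, that is, unique up to an additive constant. In dimension $d \geq 3$ the completeness of $V$ is standard via the Sobolev inequality; the case $d = 2$ requires the slightly more delicate Deny--Lions framework but is classical. Uniqueness within the class $\nabla \chi_p \in L^2$ then follows from a cut-off (Caccioppoli) argument testing the homogeneous equation against $\chi_p$ itself.

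For the decay estimates I would use the whole-space Green function $G(x,y)$ of $L = -\hbox{div}[A_{per} \nabla \cdot]$, which under~\eqref{hyp:a_holder} satisfies the Avellaneda--Lin bounds (in the spirit of~\eqref{eq:bound_Gamma} and of~\cite{abl-green}), namely $|\nabla_y G(x,y)| \leq C |x-y|^{1-d}$ and $|\nabla_x \nabla_y G(x,y)| \leq C |x-y|^{-d}$. Since $g$ is supported in $\overline{Q}$, the representation $\chi_p(x) = - \int_Q \nabla_y G(x,y) \cdot g(y) \, dy$ holds, after identifying it with the variational solution up to a constant (this is the particular solution referred to in the statement). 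For $|x| \geq \sqrt{d}$ every $y \in \overline{Q}$ satisfies $|x-y| \geq |x|/2$, whence $|\chi_p(x)| \leq C \int_Q |x-y|^{1-d} |g(y)| \, dy \leq C' |x|^{1-d}$; combined with the local boundedness of $\chi_p$ (interior elliptic regularity, $A_{per}$ being H\"older and $g$ bounded), this yields~\eqref{res-b:lem4}. Note that in $d=2$ the Green function itself carries a logarithm (its constant-coefficient model being $\sim \log|x-y|$), but since only $\nabla_y G$ enters the representation, the decay of $\chi_p$ is unaffected.

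For the gradient bound~\eqref{res:lem4} I would differentiate the representation, $\nabla \chi_p(x) = - \int_Q \nabla_x \nabla_y G(x,y) \cdot g(y) \, dy$, and use the mixed bound to get $|\nabla \chi_p(x)| \leq C \int_Q |x-y|^{-d} |g(y)| \, dy \leq C' |x|^{-d}$ for $|x|$ large. Alternatively, and avoiding any second-derivative estimate, one observes that $\chi_p$ solves the homogeneous equation $-\hbox{div}[A_{per} \nabla \chi_p] = 0$ on every ball $B(x,|x|/4)$ disjoint from $\overline{Q}$, and invokes the Avellaneda--Lin interior Lipschitz estimate for periodic H\"older coefficients (scale-invariant after the rescaling $z \mapsto x + (|x|/4) z$), namely $\|\nabla \chi_p\|_{L^\infty(B(x,|x|/8))} \leq (C/|x|) \sup_{B(x,|x|/4)} |\chi_p|$; together with the decay $|\chi_p| \leq C|x|^{1-d}$ just established, this again gives $|\nabla \chi_p(x)| \leq C|x|^{-d}$, and the remaining bounded range of $|x|$ is covered by the same local regularity. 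I expect the main obstacle to be precisely the invocation of these sharp, scale-uniform Green-function and interior-Lipschitz estimates for periodic H\"older coefficients (following~\cite{abl-green} and~\cite{Blanc2010}), together with the care needed to identify the Green representation with the variational solution and to set up the functional framework in the borderline dimension $d=2$; the integral computations themselves are routine once those tools are in hand.
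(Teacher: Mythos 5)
Your proof is correct and is essentially the argument the paper intends: the paper gives no proof of Lemma~\ref{lem4} itself but simply invokes ``the same arguments as in~\cite[Lemma~4]{Blanc2010}'', which are precisely your route --- variational existence and uniqueness up to a constant in the homogeneous Sobolev space, then the representation of $\chi_p$ through the whole-space Green function of $-\hbox{div}\left[A_{per}\nabla\cdot\right]$ and the Avellaneda--Lin-type bounds $|\nabla_y G(x,y)|\leq C|x-y|^{1-d}$ and $|\nabla_x\nabla_y G(x,y)|\leq C|x-y|^{-d}$ for periodic H\"older coefficients (as in~\cite{abl-green}), yielding~\eqref{res-b:lem4} and~\eqref{res:lem4}. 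Your remarks on the $d=2$ functional framework, the identification of the Green representation with the variational solution, and the rescaled interior Lipschitz alternative are all consistent with that reference.
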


We are now in position to make precise the behavior of
$\phi_k^\varepsilon$ in the $H^1$ norm. Let us first argue
formally. Introduce the matrix $E_k = \mathbf{1}_{Q+k} B_{per}$. Using the
periodicity of $A_{per}$, $B_{per}$ and $\wper_p$, and after changing
variables, we recast~\eqref{PB:chip} as
$$
-\hbox{div}\left[ A_{per}\left(\frac{\cdot}{\varepsilon}\right) 
\nabla \chi_p \left( \frac{\cdot}{\eps}-k \right) \right]
=
\hbox{div}\left[ E_k\left(\frac{\cdot}{\varepsilon}\right) 
\left( p + \nabla \wper_p\left(\frac{\cdot}{\varepsilon}\right) \right) \right].
$$
In turn, the problem~\eqref{PB:phikeps} reads
$$
-\hbox{div}\left[A_{per}\left(\frac{\cdot}{\varepsilon}\right)\nabla
  \phi_k^\varepsilon \right]
=
\hbox{div}\left[E_k\left(\frac{\cdot}{\varepsilon}\right)
  \nabla u_0^\varepsilon
\right] 
\approx
\sum_{i=1}^d 
\hbox{div}\left[ E_k\left(\frac{\cdot}{\varepsilon}\right)
  \partial_i u_0^\star \left(
e_i + \nabla \wper_{e_i} \left(\frac{\cdot}{\varepsilon}\right)
\right)
\right], 
$$
where we have used the expansion~\eqref{tse:u0eps} of $u_0^\eps$ (in
which we have only kept the highest order terms). Assuming that, in the
above equation, $x$ and $x/\eps$ are independent variables, we thus see
that 
$\dps
\nabla \phi_k^\varepsilon(x) \approx \sum_{i=1}^d 
\partial_i u_0^\star(x) \ \nabla \chi_{e_i} \left( \frac{x}{\eps}-k \right)
$,
and thus 
$\dps 
\phi_k^\varepsilon(x) \approx \eps \sum_{i=1}^d 
\partial_i u_0^\star(x) \ \chi_{e_i} \left( \frac{x}{\eps}-k
\right)
$.
These formal manipulations motivate the following result, the rigorous
proof of which is postponed until Section~\ref{2scale:proofs}:
\begin{prop}
\label{theo:restu1}
Let $\phi_k^\varepsilon$ be the solution to~\eqref{PB:phikeps} and
$\chi_{e_i}$ be the solution to~\eqref{PB:chip}, for $1 \leq i \leq
d$. Introduce  
\begin{equation}
\label{def:Ieps}
I_\varepsilon = \left\{ k \in \ZZ^d \text{ such that } \varepsilon(Q+k)
  \cap \mathcal{D} \ne \emptyset \right\}, 
\quad 
\text{Card}(I_\varepsilon) \sim \varepsilon^{-d},
\end{equation}
and 
$$
\betaa_k^\varepsilon = \varepsilon \sum\limits_{i=1}^d 
\chi_{e_i}\left(\frac{\cdot}{\varepsilon}-k\right) \partial_i u_0^\star,
$$
where $u_0^\star$ is solution to~\eqref{eq:homog-0}. 
Assume that $u_0^\star \in W^{2,\infty}(\mathcal{D})$, and
that~\eqref{hyp:a_holder} holds. 
We then have
$$
\sum\limits_{k \in I_\varepsilon}
\|\phi_k^\varepsilon - \betaa_k^\varepsilon\|^2_{H^1(\mathcal{D})} 
\leq C \varepsilon \ln(1/\varepsilon),
$$
where $C$ is a constant independent of $\varepsilon$.
\end{prop}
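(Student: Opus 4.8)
The plan is to estimate, cell by cell, the error $R_k^\eps := \phi_k^\eps - \betaa_k^\eps$ and then sum the resulting bounds over $k \in I_\eps$. First I would insert the two-scale expansion~\eqref{tse:u0eps} of $u_0^\eps$ into the right-hand side of~\eqref{PB:phikeps}, and use the rescaled version of the corrector equation~\eqref{PB:chip} satisfied by $\chi_{e_i}(\cdot/\eps-k)$. After the algebra, the leading interior residual in the equation for $R_k^\eps$ takes the form $\sum_{i=1}^d \nabla(\partial_i u_0^\star)\cdot G_i(\cdot/\eps - k)$, where $G_i(y)=\mathbf{1}_Q(y) B_{per}(y)(e_i+\nabla \wper_{e_i}(y)) + A_{per}(y)\nabla\chi_{e_i}(y)$ is, by~\eqref{PB:chip}, divergence-free in $y$. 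All the other contributions carry an explicit factor $\eps$, or the remainder $\theta_0^\eps$ (controlled by~\eqref{restepsu0}), or a second derivative of $u_0^\star$; moreover they are either localized to the single cell $\eps(Q+k)$ by the factor $\mathbf{1}_{Q+k}$ or weighted by the decaying profiles $\chi_{e_i}$, $\nabla\chi_{e_i}$ of Lemma~\ref{lem4}.

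Since $\betaa_k^\eps$ does not vanish on $\partial\mathcal D$, I would introduce a cutoff function $m^\eps$, equal to $1$ outside the tubular layer $T_\delta=\{x\in\mathcal D:\ \mathrm{dist}(x,\partial\mathcal D)<\delta\}$, vanishing on $\partial\mathcal D$, with $|\nabla m^\eps|\le C/\delta$, and work with $\widehat R_k^\eps := \phi_k^\eps - m^\eps\betaa_k^\eps \in H^1_0(\mathcal D)$. Testing the equation for $\widehat R_k^\eps$ against $\widehat R_k^\eps$ and using the coercivity of $A_{per}$ produces three kinds of terms: the interior residual above, the boundary-layer energy $\int_{T_\delta}|\nabla\betaa_k^\eps|^2$, and the cutoff term $\int_{\mathcal D}|\nabla m^\eps|^2\,|\betaa_k^\eps|^2$.

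The delicate point is the interior residual. Because $G_i(\cdot/\eps - k)$ is divergence-free and $\widehat R_k^\eps\in H^1_0$, one integration by parts rewrites $\int_{\mathcal D}\sum_i\nabla(\partial_i u_0^\star)\cdot G_i(\cdot/\eps-k)\,\widehat R_k^\eps$ as $-\sum_i\int_{\mathcal D}\partial_i u_0^\star\, G_i(\cdot/\eps-k)\cdot\nabla\widehat R_k^\eps$; but since $\|G_i(\cdot/\eps-k)\|_{L^2(\mathcal D)}=O(\eps^{d/2})$, this alone only yields $\|\widehat R_k^\eps\|_{H^1}=O(\eps^{d/2})$ per cell, hence an $O(1)$ bound after summation over the $\sim\eps^{-d}$ cells, which is insufficient. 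To extract the missing power of $\eps$, I would use the skew-symmetric flux corrector $H_i$ associated to the decaying divergence-free field $G_i$ (so that $G_i=\hbox{div}\,H_i$, with $|H_i(y)|\lesssim |y|^{-(d-1)}$ as $|y|\to\infty$, obtained by solving a Poisson problem whose right-hand side decays like $|y|^{-(d+1)}$), and integrate by parts a second time. The boundary term then vanishes, because on $\partial\mathcal D$ the gradient of $\widehat R_k^\eps\in H^1_0$ is purely normal while $H_i$ is skew-symmetric; and the term that would involve third derivatives of $u_0^\star$ drops for the same skew-symmetry reason. What remains is $\eps\int_{\mathcal D} H_i(\cdot/\eps-k):D^2 u_0^\star\,\nabla\widehat R_k^\eps$, of size $\eps^{1+d/2}\sqrt{\ln(1/\eps)}\,\|\nabla\widehat R_k^\eps\|$ once $\|H_i(\cdot/\eps-k)\|_{L^2(\mathcal D)}\lesssim \eps^{d/2}\sqrt{\ln(1/\eps)}$ is used; summed over $k$ this contributes only $O(\eps^2\ln(1/\eps))$.

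It then remains to sum over $k\in I_\eps$. Interchanging the sum and the integral and invoking the decay estimates~\eqref{res:lem4}--\eqref{res-b:lem4}, the decisive elementary bound is $\dps \sum_{k\in I_\eps}(1+|\xi-k|^{2(d-1)})^{-1}\le C\ln(1/\eps)$ for every $\xi$, borderline divergent in dimension $d=2$ and bounded for $d>2$; this yields $\sum_k|\betaa_k^\eps(x)|^2\lesssim \eps^2\ln(1/\eps)$. Consequently the cutoff term sums to $\lesssim \eps^2\ln(1/\eps)/\delta$, the boundary-layer energy to $\lesssim\delta$, the $\theta_0^\eps$- and $\eps$-terms to $\lesssim\eps$ (using~\eqref{restepsu0} together with $\sum_k\|\nabla\theta_0^\eps\|^2_{L^2(\eps(Q+k))}=\|\nabla\theta_0^\eps\|^2_{L^2(\mathcal D)}$), and the interior residual to $\lesssim\eps^2\ln(1/\eps)$. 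Choosing $\delta\sim\eps$ balances the first two boundary contributions and gives $\sum_k\|\widehat R_k^\eps\|^2_{H^1}\le C\eps\ln(1/\eps)$; finally adding back $\sum_k\|(1-m^\eps)\betaa_k^\eps\|^2_{H^1(\mathcal D)}$, which is governed by the same boundary-layer quantities, yields the claimed bound. The main obstacles are thus the flux-corrector argument needed to beat the naive $O(1)$ estimate of the interior residual, and the careful bookkeeping of the sum over the diverging number of cells, where the only borderline-summable decay of $\chi_{e_i}$ is precisely what produces the logarithmic factor.
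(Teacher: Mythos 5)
Your architecture coincides with the paper's up to the decisive step: the paper also truncates $\betaa_k^\eps$ near $\partial\mathcal{D}$ with a cutoff in a layer of width $\eps$ (your $\delta\sim\eps$), runs an energy estimate on $\phi_k^\eps$ minus the truncated profile, isolates exactly your divergence-free field ($Z_k(y)=\mathbf{1}_{Q+k}(y)B_{per}(y)(e_p+\nabla \wper_{e_p}(y))+A_{per}(y)\nabla\chi_{e_p}(y-k)$, identical to your $G_i$), and does the same cell-summation bookkeeping in which the borderline decay $|\chi_p|\lesssim |y|^{1-d}$ of Lemma~\ref{lem4} produces the logarithm in $d=2$. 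Where you genuinely diverge is the interior residual $\int_{\mathcal D} Z_k\left(\frac{\cdot}{\eps}\right)\cdot\nabla\partial_p u_0^\star\,(\phi_k^\eps-\widetilde v_k^\eps)$: the paper does \emph{not} integrate by parts a second time. It pairs the $L^1(\mathcal D)$ norm of $Z_k(\cdot/\eps)$ (which is $O(\eps^d\ln(1/\eps))$ by~\eqref{res:lem4}, the source of the log in $d>2$) with the uniform bound $\|\phi_k^\eps\|_{L^\infty(\mathcal D)}\leq C\eps$ of Lemma~\ref{lem:decomposition}, itself proved via the Green-function estimates of~\cite{abl-green} under~\eqref{hyp:a_holder}. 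Your flux-corrector route — a non-periodic analogue of Lemma~\ref{lem3} for a decaying divergence-free field — would bypass both the Green-function machinery and the $L^\infty$ estimate~\eqref{eq:phi_estim2}, and would even give a slightly better interior contribution ($O(\eps^2)$ up to logarithms, hence $O(\eps)$ overall when $d>2$); the paper's route buys not having to construct and estimate the decaying potential $H_i$.

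That construction is precisely where your proposal has a genuine gap. You assert $G_i=\hbox{div}\,H_i$ with $H_i$ skew-symmetric and $|H_i(y)|\lesssim|y|^{-(d-1)}$, ``obtained by solving a Poisson problem whose right-hand side decays like $|y|^{-(d+1)}$'' — but no such right-hand side is available. Lemma~\ref{lem4} gives only $|G_i(y)|\leq C|y|^{-d}$ for $|y|\geq 1$; no pointwise decay of $\nabla G_i$ is known (the paper never estimates $D^2\chi_p$, which is not to be expected pointwise for a divergence-form equation with merely H\"older coefficients), and $\nabla G_i$ in fact carries a singular surface part on $\partial Q$ coming from $\mathbf{1}_Q$. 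The construction can be repaired without differentiating $G_i$: take $U$ the Newtonian potential of $G_i$ componentwise, note $\hbox{div}\,U$ is harmonic (since $\hbox{div}\,G_i=0$) and decays, hence vanishes, and set $(H_i)_{lj}=\partial_l U_j-\partial_j U_l$; but the convolution estimate with the borderline $|y|^{-d}$ source then yields only $|H_i(y)|\lesssim |y|^{1-d}\ln(2+|y|)$ — a logarithm worse than you claim, harmless here only because your interior term has $\eps^2$ of slack, and in any case a lemma of the same potential-theoretic weight as Lemma~\ref{lem4}, not a remark. Two smaller points: traces of $\nabla\widehat R_k^\eps$ on $\partial\mathcal D$ do not exist for an $H^1_0$ function, so the boundary term must be dispatched as in Lemma~\ref{lem3} (skew-symmetry kills $\sum_{l,j}(H_i)_{lj}\partial_l\partial_j\varphi$ pointwise for smooth $\varphi$, then density); and no third derivatives of $u_0^\star$ ever arise in the correct computation — it is the second derivatives of the test function that cancel, leaving exactly your $\eps\int H_i:\left(D^2u_0^\star\otimes\nabla\widehat R_k^\eps\right)$ term.
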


\section{Proofs of Theorems~\ref{exp2scaleueta} and~\ref{exp2scaleueta-1D}}
\label{sec:preuve_main}

\begin{proof}[Proof of Theorem~\ref{exp2scaleueta}]
We have shown above (see~\eqref{eq:decompo_gene}) that
$$
u^\varepsilon_\eta(x,\omega) = u_0^\varepsilon(x) + \eta \esp(X_0) \overline{u}_1^\varepsilon(x) + \eta \sum\limits_{k \in I_\varepsilon} (X_k(\omega)-\esp(X_0)) \phi_k^\varepsilon(x) + \eta^2 r_\eta^\varepsilon(x,\omega),
$$
where the set $I_\eps$ is defined by~\eqref{def:Ieps} (recall that
$\phi_k^\eps \equiv 0$ whenever $k \in \ZZ^d$ is such that $k \notin
I_\eps$). 
Using the fact that $X_k$ are i.i.d. scalar random variables, we have
$$
\esp \left[ \|u^\varepsilon_\eta -
  v^\varepsilon_\eta\|^2_{H^1(\mathcal{D})} \right] 
\leq C \left[ D_0^2 + D_1^2 + D_2^2 + D_3^2 \right],
$$
where
\begin{eqnarray*}
D_0 &=& 
\left\|u_0^\varepsilon - u_0^\star - \varepsilon \sum\limits_{p=1}^d
  \wper_{e_p}\left(\frac{\cdot}{\varepsilon}\right) \partial_p u_0^\star
\right\|_{H^1(\mathcal{D})}, 
\\
D_1 &=& \eta |\esp(X_0)| \left\| \overline{u}_1^\varepsilon -
  \overline{u}_1^\star - \varepsilon \sum\limits_{p=1}^d
  \left(\wper_{e_p}\left(\frac{\cdot}{\varepsilon}\right) \partial_p
    \overline{u}_1^\star + \psi_{e_p}\left(\frac{\cdot}{\varepsilon}\right)
    \partial_p u_0^\star \right)\right\|_{H^1(\mathcal{D})}, 
\\
D_2 &=& \eta \sqrt{\var(X_0)} \sqrt{\sum\limits_{k \in I_\varepsilon}
  \left\|\phi_k^\varepsilon - \varepsilon \sum\limits_{p=1}^d
    \chi_{e_p}\left(\frac{\cdot}{\varepsilon}-k\right) \partial_p u_0^\star
  \right\|_{H^1(\mathcal{D})}^2},
\\
D_3 &=& \eta^2 \sqrt{ \esp \left[ \| r_\eta^\varepsilon
    \|^2_{H^1(\mathcal{D})} \right] }.
\end{eqnarray*}
We have shown in Propositions~\ref{the:u0eps},~\ref{theo:restubar1}
and~\ref{theo:restu1} that $D_0 \leq C \sqrt{\varepsilon}$, 
$D_1 \leq C \eta \sqrt{\varepsilon}$ and 
$D_2 \leq C \eta \sqrt{\varepsilon \ln(1/\varepsilon)}$
respectively, for a constant $C$ independent of
$\eps$ and $\eta$ (note that all assumptions of these propositions are
satisfied since, in view of~\eqref{hyp:a_holder}
and~\eqref{hyp:b_holder}, we have $\wper_p \in W^{1,\infty}(\RR^d)$ and
$\psi_p \in W^{1,\infty}(\RR^d)$ for any $p \in \RR^d$).
Next, using Lemma~\ref{lem1eta}, we see that $D_3
\leq C \eta^2$ for a constant $C$ independent of $\eps$ and $\eta$. This
concludes the proof of~\eqref{restueta}.
\end{proof}

\medskip

\begin{proof}[Proof of Theorem~\ref{exp2scaleueta-1D}]
To fix the idea, we choose ${\cal D} = (0,1)$.
We again argue on the basis of~\eqref{eq:decompo_gene}. 
Tedious but straightforward computations show that, in dimension one,
the estimates of Propositions~\ref{the:u0eps},~\ref{theo:restubar1}
and~\ref{theo:restu1} read
$$
\left\| \eps \frac{d \theta^\eps_0}{dx} \right\|_{L^2(0,1)} 
\leq
C \eps,
\quad \quad
\left\| \frac{d\overline{u}_1^\varepsilon}{dx} - 
\frac{d\overline{v}_1^\varepsilon}{dx} \right\|_{L^2(0,1)} 
\leq 
C \varepsilon, 
\quad \quad
\sum\limits_{k \in I_\varepsilon}
\left\| \frac{d\phi_k^\varepsilon}{dx} - 
\frac{d\betaa_k^\varepsilon}{dx} \right\|^2_{L^2(0,1)} 
\leq 
C \varepsilon.
$$
We thus obtain
\begin{equation}
\label{1D:1}
\sqrt{ \esp \left[ 
\left\| \frac{du^\varepsilon_\eta}{dx} - 
\frac{dv^\varepsilon_\eta}{dx} \right\|^2_{L^2(0,1)} 
\right] }
\leq C \left( \varepsilon + \eta \sqrt{\varepsilon} + \eta^2 \right).
\end{equation}
We next write that, almost surely,
\begin{equation}
\label{1D:2}
\left\| u^\varepsilon_\eta(\cdot,\omega) - 
v^\varepsilon_\eta(\cdot,\omega) \right\|_{L^\infty(0,1)} 
\leq 
\left\| \frac{du^\varepsilon_\eta}{dx}(\cdot,\omega) - 
\frac{dv^\varepsilon_\eta}{dx}(\cdot,\omega) \right\|_{L^2(0,1)} 
+
\left| u^\varepsilon_\eta(0,\omega) - 
v^\varepsilon_\eta(0,\omega) \right|.
\end{equation}
Using that $u^\varepsilon_\eta(0,\omega) = u_0^\star(0) =
\overline{u}_1^\star(0) = 0$ and that 
$\wper$ and $\psi$ belong to $L^\infty(\RR)$, we obtain that
$$
\left| u^\varepsilon_\eta(0,\omega) - 
v^\varepsilon_\eta(0,\omega) \right| \leq C \eps + C \eps \eta
\left| (u_0^\star)'(0) \sum_{k \in I_\varepsilon} (X_k(\omega)-\esp(X_0))
  \chi\left(-k\right) \right|,
$$
hence, using that $\chi \in L^\infty(\RR)$, we have
$$
\esp \left[ \left| u^\varepsilon_\eta(0,\omega) - 
v^\varepsilon_\eta(0,\omega) \right|^2 \right] 
\leq 
C \eps^2 + C \var(X_0) \eps^2 \eta^2
\sum_{k \in I_\varepsilon} \chi^2\left(-k\right)
\leq
C \eps^2 + C \eta^2 \eps.
$$
Collecting this result with~\eqref{1D:1} and~\eqref{1D:2} yields the
bound~\eqref{restueta-1d-Linfty}. Likewise,
collecting~\eqref{restueta-1d-Linfty} 
and~\eqref{1D:1}, we obtain the bound~\eqref{restueta-1d-H1}. 
This concludes the proof of Theorem~\ref{exp2scaleueta-1D}.
\end{proof}

\section{Proofs of the two scale expansions}
\label{2scale:proofs}

We collect in this section the proofs of the results stated in
Section~\ref{2scale}. The following technical result, already present
in~\cite[p.~27]{Jikov1994}, and that we recall here for the sake of
completeness, will be useful. 

\begin{lemme}
\label{lem3}
Let $\mathcal{D}$ be a bounded open set of $\RR^d$. Consider $Z \in
(L^2_{loc}(\RR^d))^d$ a $Q$-periodic vector field such that 
$$
\hbox{div }(Z)=0 \quad \text{and} \quad \int_Q Z = 0.
$$
Then, for any $v \in W^{1,\infty}(\mathcal{D})$, we have
$$
\left\| \hbox{div}\left[Z\left(\frac{\cdot}{\varepsilon}\right) v \right]
\right\|_{H^{-1}(\mathcal{D})} \leq C \varepsilon 
\left\| \nabla v \right\|_{L^\infty(\mathcal{D})}, 
$$
where $C$ is a constant independent of $\varepsilon$ and $v$.
\end{lemme}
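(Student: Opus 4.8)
The plan is to write the highly oscillatory factor $Z(\cdot/\varepsilon)$ as a derivative of a bounded-in-$L^2$ periodic potential, so that the prefactor $\varepsilon$ appears naturally and one derivative can be shifted away from the oscillating object. Concretely, I would represent $Z$ as the componentwise divergence of a \emph{skew-symmetric} $Q$-periodic matrix field $B=(B_{jk})$, i.e.\ $Z_j=\sum_k \partial_k B_{jk}$ with $B_{jk}=-B_{kj}$. The skew-symmetry is the key structural feature: it will make a second-derivative term cancel, leaving only a first-order term that is manifestly $O(\varepsilon)$ in $H^{-1}$.

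First I would construct $B$. Since $Z_j\in L^2_{\rm loc}(\RR^d)$ is $Q$-periodic with $\int_Q Z_j=0$, the periodic Poisson problem $\Delta\beta_j=Z_j$ has a $Q$-periodic solution $\beta_j\in H^1_{\rm per}(Q)$ (unique up to a constant). Setting $B_{jk}=\partial_k\beta_j-\partial_j\beta_k$, one gets a $Q$-periodic, skew-symmetric field with $B\in (L^2(Q))^{d\times d}$, and
$$
\sum_k\partial_k B_{jk}=\Delta\beta_j-\partial_j(\hbox{div}\,\beta)=Z_j-\partial_j(\hbox{div}\,\beta).
$$
Because $\Delta(\hbox{div}\,\beta)=\sum_k\partial_k\Delta\beta_k=\hbox{div}\,Z=0$, the periodic function $\hbox{div}\,\beta$ is harmonic, hence constant, so $\partial_j(\hbox{div}\,\beta)=0$ and indeed $Z_j=\sum_k\partial_k B_{jk}$. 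Note I only need $B\in L^2(Q)$, not $B\in L^\infty$, which is exactly why the right-hand side of the claimed estimate carries $\|\nabla v\|_{L^\infty}$ rather than $\|v\|_{L^\infty}$.

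Next, from the scaling relation $Z_j(\cdot/\varepsilon)=\varepsilon\sum_k\partial_k\bigl[B_{jk}(\cdot/\varepsilon)\bigr]$ and the Leibniz rule, I would compute, as distributions on $\mathcal{D}$,
$$
\hbox{div}\Bigl[Z(\cdot/\varepsilon)\,v\Bigr]
=\varepsilon\sum_{j,k}\partial_j\partial_k\bigl[B_{jk}(\cdot/\varepsilon)\,v\bigr]
-\varepsilon\sum_{j,k}\partial_j\bigl[B_{jk}(\cdot/\varepsilon)\,\partial_k v\bigr].
$$
The first double sum vanishes: relabelling $j\leftrightarrow k$ and using that distributional mixed partials commute together with $B_{jk}=-B_{kj}$ shows that this term equals its own opposite. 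Working at the level of this distributional identity (rather than integrating by parts against a test function) is what avoids any spurious boundary contribution from $\partial\mathcal D$, and this is the only point needing a little care. I am thus left with $\hbox{div}[Z(\cdot/\varepsilon)v]=-\varepsilon\,\hbox{div}[B(\cdot/\varepsilon)\nabla v]$. Using the elementary bound $\|\hbox{div}\,W\|_{H^{-1}(\mathcal D)}\le\|W\|_{L^2(\mathcal D)}$, the $L^\infty$ control of $\nabla v$, and the uniform-in-$\varepsilon$ estimate $\|B(\cdot/\varepsilon)\|_{L^2(\mathcal D)}\le C\|B\|_{L^2(Q)}$ (periodicity plus boundedness of $\mathcal D$), I conclude
$$
\bigl\|\hbox{div}[Z(\cdot/\varepsilon)v]\bigr\|_{H^{-1}(\mathcal D)}
\le \varepsilon\,\|B(\cdot/\varepsilon)\,\nabla v\|_{L^2(\mathcal D)}
\le C\,\varepsilon\,\|\nabla v\|_{L^\infty(\mathcal D)},
$$
with $C$ independent of $\varepsilon$ and $v$, which is the desired bound.
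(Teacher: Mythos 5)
Your proof is correct and follows essentially the same route as the paper: both rest on representing $Z$ as the divergence of a skew-symmetric $Q$-periodic matrix potential, using the Leibniz rule to extract the factor $\varepsilon$, and exploiting skew-symmetry to cancel the second-derivative term, with the remaining term bounded in $H^{-1}(\mathcal{D})$ via the uniform-in-$\varepsilon$ $L^2$ bound on the rescaled potential. The only difference is cosmetic: you construct the potential explicitly by solving a periodic Poisson problem (a standard fact the paper simply cites from~\cite[p.~6]{Jikov1994}), and you argue at the level of distributional identities where the paper pairs against test functions $\phi \in H^1_0(\mathcal{D})$.
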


Note that, as $Z$ is divergence free, we have
$$
\hbox{div}\left[Z\left(\frac{\cdot}{\varepsilon}\right) v \right]
=
Z\left(\frac{\cdot}{\varepsilon}\right) \cdot \nabla v.
$$
Since $Z$ is $Q$-periodic, this quantity converges weakly in
$L^2(\mathcal{D})$ to $\langle Z \rangle \cdot \nabla v = 0$, as the
average of $Z$ vanishes. The above result hence shows that, in the
$H^{-1}(\mathcal{D})$ norm, the above quantity vanishes at the rate
$\eps$.

\begin{proof}
In view of the assumptions of $Z$, there exists
(see~\cite[p. 6]{Jikov1994}) a skew symmetric matrix $J$ such that, 
$$
\forall 1 \leq j \leq d, \quad 
Z_j = \sum_{i=1}^d \frac{\partial J_{ij}}{\partial x_i}
$$
and
$$
\forall 1 \leq i,j \leq d, \quad J_{ij} \in H^1_{loc}(\RR^d),
\quad 
\text{$J_{ij}$ is $Q$-periodic},
\quad
\int_Q J_{ij} = 0.
$$
The $j$-th coordinate of the vector $\dps
Z\left(\frac{\cdot}{\varepsilon}\right)v$ reads 
\begin{eqnarray*}
\left[Z\left(\frac{x}{\varepsilon}\right)v(x) \right]_j 
&=& \sum\limits_{i=1}^d 
\frac{\partial J_{ij}}{\partial x_i}\left(\frac{x}{\varepsilon}\right)v(x) 
\\
&=& \varepsilon \sum\limits_{i=1}^d 
\frac{\partial}{\partial
  x_i}\left(J_{ij}\left(\frac{x}{\varepsilon}\right)v(x)\right) 
- 
\varepsilon \sum\limits_{i=1}^d J_{ij}\left(\frac{x}{\varepsilon}\right)
\frac{\partial v}{\partial x_i}(x)
\\
&=& \eps \widetilde{B}_j(x) - \eps B_j(x),
\end{eqnarray*}
where
$$
B_j(x) = \sum\limits_{i=1}^d J_{ij} \left(\frac{x}{\varepsilon}\right)
\frac{\partial v}{\partial x_i}(x)
\quad \text{and} \quad
\widetilde{B}_j(x) = \sum\limits_{i=1}^d 
\frac{\partial}{\partial
  x_i}\left(J_{ij}\left(\frac{x}{\varepsilon}\right)v(x)\right). 
$$
The vector $\widetilde{B}(x)$ is divergence free as $J$ is
skew symmetric. For any $\phi \in H^1_0(\mathcal{D})$, we thus have
\begin{eqnarray*}
\left< \hbox{div}\left[Z\left(\frac{\cdot}{\varepsilon}\right)v\right] ,
  \phi \right> 
&=& 
-\varepsilon \left< \hbox{div}\left[B \right] , \phi \right> 
\\
&=& \varepsilon \int_\mathcal{D} B \cdot \nabla \phi \\
&=& \varepsilon \sum\limits_{i,j=1}^d \int_\mathcal{D} \partial_j \phi
\, 
J_{ij}\left(\frac{\cdot}{\varepsilon}\right) \partial_i v, 
\end{eqnarray*}
hence
\begin{eqnarray*}
\left| \left<
    \hbox{div}\left[Z\left(\frac{\cdot}{\varepsilon}\right)v\right] ,
    \phi \right> \right| 
&\leq &
\varepsilon 
\|\nabla v\|_{L^\infty(\mathcal{D})} \|\phi\|_{H^1(\mathcal{D})}
\sum\limits_{i,j=1}^d \left\|
  J_{ij}\left(\frac{\cdot}{\varepsilon}\right)
\right\|_{L^2(\mathcal{D})} 
\\
&\leq &
\varepsilon 
\|\nabla v\|_{L^\infty(\mathcal{D})} \|\phi\|_{H^1(\mathcal{D})}
\sum\limits_{i,j=1}^d \left\|
  J_{ij} \right\|_{L^2(Q)}.
\end{eqnarray*}
As the above bound holds for any $\phi \in H^1_0(\mathcal{D})$, we
deduce that there exists $C$ such that, for any $v \in
W^{1,\infty}(\mathcal{D})$ and any $\eps$, we have
$$
\left\| \hbox{div}\left[Z\left(\frac{\cdot}{\varepsilon}\right)v\right]
\right\|_{H^{-1}(\mathcal{D})} \leq C \varepsilon
\| \nabla v \|_{L^\infty(\mathcal{D})}. 
$$
This concludes the proof.
\end{proof}

\subsection{Two scale expansion of $\overline{u}_1^\varepsilon$}

In this section, we prove Propositions~\ref{theo:u1bar0}
and~\ref{theo:restubar1}. 

\begin{proof}[Proof of Proposition~\ref{theo:u1bar0}]
This homogenization result is proved using the method of oscillating
test functions~\cite{murat,tartar}. 
The variational formulation of~\eqref{PB:u1bareps} reads
\begin{equation}
\label{def:varu1bar}
\forall v \in H^1_0({\mathcal D}),
\quad
{\cal A}_\varepsilon(\overline{u}_1^\varepsilon,v) = - L_\varepsilon(v),
\end{equation}
where, for any $u$ and $v$ in $H^1_0({\mathcal D})$,
$$
{\cal A}_\varepsilon(u,v) = \int_\mathcal{D} \left(\nabla v\right)^T
A_{per}\left(\frac{\cdot}{\varepsilon}\right) \nabla u 
\quad \text{and} \quad 
L_\varepsilon(v) = \int_\mathcal{D} \left(\nabla v\right)^T B_{per}
\left(\frac{\cdot}{\varepsilon}\right) \nabla u_0^\varepsilon. 
$$
Using the coercivity of $A_{per}$, the boundedness of $B_{per}$
and~\eqref{ape-1}, and taking $v =
\overline{u}_1^\varepsilon$ as a function test in~\eqref{def:varu1bar},
we obtain that $\overline{u}_1^\varepsilon$ is bounded in
$H^1_0(\mathcal{D})$. Thus, using the Rellich Theorem, we deduce that there
exists $\overline{u}_1^\star \in H^1_0(\mathcal{D})$ such that, up to the extraction of a
subsequence, 
$$
\overline{u}_1^{\varepsilon} \text{ converges to $\overline{u}_1^\star$, 
weakly in $H_0^1(\mathcal{D})$ and strongly in $L^2(\mathcal{D})$.}
$$
For any function $\varphi \in \mathcal{C}^{\infty}_0(\mathcal{D})$,
define the test function
$$
v^\varepsilon = \varphi + \varepsilon \sum\limits_{i=1}^d \wper_{e_i}\left(\frac{\cdot}{\varepsilon}\right) \partial_i \varphi,
$$
which obviously belongs to $H^1_0(\mathcal{D})$.
In view of~\eqref{def:varu1bar}, we have 
\begin{equation}
\label{def:varu1bar-veps}
{\cal A}_\varepsilon(\overline{u}_1^\varepsilon,v^\varepsilon) = - L_\varepsilon(v^\varepsilon).
\end{equation}
We now expand both sides of~\eqref{def:varu1bar-veps} in powers of
$\varepsilon$: 
\begin{eqnarray}
{\cal A}_\varepsilon(\overline{u}_1^\varepsilon,v^\varepsilon) &=& 
{\cal A}_\varepsilon^0(\overline{u}_1^\varepsilon,\varphi) + 
\varepsilon {\cal A}_\varepsilon^1(\overline{u}_1^\varepsilon,\varphi), 
\label{Aeps-exp}\\
L_\varepsilon(v^\varepsilon) &=& L_\varepsilon^0(\varphi) + \varepsilon
L_\varepsilon^1(\varphi) 
\label{Leps-exp},
\end{eqnarray}
where
\begin{eqnarray*}
{\cal A}_\varepsilon^0(\overline{u}_1^\varepsilon,\varphi) 
&=& 
\int_\mathcal{D} \left(\nabla \varphi + \sum\limits_{i=1}^d \nabla
  \wper_{e_i}\left(\frac{\cdot}{\varepsilon}\right) \partial_i \varphi\right)^T
A_{per}\left(\frac{\cdot}{\varepsilon}\right) \nabla
\overline{u}_1^\varepsilon, 
\\
{\cal A}_\varepsilon^1(\overline{u}_1^\varepsilon,\varphi) 
&=& 
\int_\mathcal{D}  \sum\limits_{i=1}^d
\wper_{e_i}\left(\frac{\cdot}{\varepsilon}\right) \left(\nabla \partial_i
  \varphi\right)^T A_{per}\left(\frac{\cdot}{\varepsilon}\right) \nabla
\overline{u}_1^\varepsilon, 
\\
L_\varepsilon^0(\varphi) 
&=& 
\int_\mathcal{D} \left(\nabla \varphi + \sum\limits_{i=1}^d \nabla
  \wper_{e_i}\left(\frac{\cdot}{\varepsilon}\right) \partial_i \varphi\right)^T
B_{per}\left(\frac{\cdot}{\varepsilon}\right) \nabla u_0^\varepsilon, 
\\
L_\varepsilon^1(\varphi) 
&=& 
\int_\mathcal{D}  \sum\limits_{i=1}^d
\wper_{e_i}\left(\frac{\cdot}{\varepsilon}\right) \left(\nabla \partial_i
  \varphi\right)^T B_{per}\left(\frac{\cdot}{\varepsilon}\right) \nabla
u_0^\varepsilon. 
\end{eqnarray*}
We now successively study the limit of these four quantities as $\eps \to 0$.
Using~\eqref{ape-1}, the fact that $\wper_{e_i} \in W^{1,\infty}(\RR^d)$, that 
$\overline{u}_1^\varepsilon$ is bounded in $H^1(\mathcal{D})$ and the 
boundedness of $A_{per}$ and $B_{per}$, we obtain
\begin{equation}
|{\cal A}^1_\varepsilon(\overline{u}_1^\varepsilon,\varphi) | \leq C 
\quad \text{and} \quad
 |L^1_\varepsilon(\varphi) | \leq C,
\quad
\text{$C$ independent of $\varepsilon$}. 
\label{A1eps-bound}
\end{equation}

We now turn to $L_\varepsilon^0$. Using the two scale
expansion~\eqref{tse:u0eps} of $u_0^\varepsilon$, we see that
\begin{equation}
\label{res0}
L_\varepsilon^0(\varphi) = L_\varepsilon^{00}(\varphi) +
L_\varepsilon^{01}(\varphi) + L_\varepsilon^{02}(\varphi), 
\end{equation}
where
\begin{eqnarray*}
L_\varepsilon^{00}(\varphi) &=& \sum\limits_{i,j=1}^d \int_\mathcal{D}
\left(e_i+\nabla \wper_{e_i}\left(\frac{\cdot}{\varepsilon}\right)\right)^T
B_{per}\left(\frac{\cdot}{\varepsilon}\right)\left(e_j+\nabla
  \wper_{e_j}\left(\frac{\cdot}{\varepsilon}\right)\right)\partial_i \varphi \
\partial_j u_0^\star, 
\\
L_\varepsilon^{01}(\varphi) &=& \varepsilon \sum\limits_{i,j=1}^d
\int_\mathcal{D} \left(e_i+\nabla
  \wper_{e_i}\left(\frac{\cdot}{\varepsilon}\right)\right)^T
B_{per}\left(\frac{\cdot}{\varepsilon}\right)
\wper_{e_j}\left(\frac{\cdot}{\varepsilon}\right)\left(\nabla \partial_j
  u_0^\star\right) \ \partial_i \varphi,  
\\
L_\varepsilon^{02}(\varphi) &=& \sum\limits_{i=1}^d \int_\mathcal{D}
\left(e_i+\nabla \wper_{e_i}\left(\frac{\cdot}{\varepsilon}\right)\right)^T
B_{per}\left(\frac{\cdot}{\varepsilon}\right) \varepsilon \nabla
\theta_0^\varepsilon \ \partial_i \varphi.
\end{eqnarray*}
Using~\eqref{restepsu0}, $\wper_{e_i} \in W^{1,\infty}(\RR^d)$ and $u_0^\star \in
W^{2,\infty}(\mathcal{D})$, we obtain that 
\begin{equation}
\label{res1}
|L_\varepsilon^{02}(\varphi)| \leq C \|\varepsilon
\theta_0^\varepsilon\|_{H^1(\mathcal{D})} \leq C \sqrt{\varepsilon} 
\quad \text{and} \quad
|L_\varepsilon^{01}(\varphi)| \leq C \varepsilon,
\end{equation}
where $C$ is a constant independent of $\varepsilon$. Turning to
$L_\varepsilon^{00}$, we see, using that $B_{per}$ and $\wper_{e_i}$ are
$Q$-periodic, that 
$$
\left(e_i+\nabla \wper_{e_i}\left(\frac{\cdot}{\varepsilon}\right)\right)^T
B_{per}\left(\frac{\cdot}{\varepsilon}\right)\left(e_j+\nabla
  \wper_{e_j}\left(\frac{\cdot}{\varepsilon}\right)\right) 
\rightharpoonup 
\overline{B}_{ij}
\quad \text{weakly-$\star$ in $L^\infty$},
$$
where $\overline{B}$ is defined by~\eqref{def:overlineB}.
Thus
\begin{equation}
\label{res3}
L_\varepsilon^{00}(\varphi) \rightarrow \int_\mathcal{D} \left(\nabla
  \varphi\right)^T \overline{B} \nabla u_0^\star \quad \text{as
}\varepsilon \to 0. 
\end{equation}
Collecting~\eqref{res0},~\eqref{res1} and~\eqref{res3}, we
obtain that 
\begin{equation}
\label{convLeps}
L^0_\varepsilon(\varphi) \rightarrow \int_\mathcal{D} \left(\nabla
  \varphi\right)^T \overline{B} \nabla u_0^\star \quad \text{as
}\varepsilon \to 0. 
\end{equation}
We next turn to ${\cal A}^0_\varepsilon$. 
Using that $\dps
\hbox{div}\left[A_{per}\left(\frac{\cdot}{\varepsilon}\right)\left(e_i +
    \nabla \wper_{e_i}\left(\frac{\cdot}{\varepsilon}\right)
  \right)\right]=0$ and that $A_{per}$ is symmetric,
we obtain that 
\begin{equation}
\label{A0eps}
{\cal A}^0_\varepsilon(\overline{u}_1^\varepsilon,\varphi) = -\sum\limits_{i=1}^d \int_\mathcal{D} \overline{u}_1^\varepsilon \left(\nabla \partial_i \varphi\right)^T A_{per}\left(\frac{\cdot}{\varepsilon}\right)\left(e_i + \nabla \wper_{e_i}\left(\frac{\cdot}{\varepsilon}\right) \right) .
\end{equation}
Recall now that $\overline{u}_1^{\varepsilon} \to \overline{u}_1^\star$
strongly in $L^2(\mathcal{D})$ and that, 
as $A_{per}$ and $\wper_{e_i}$ are $Q$-periodic, we have
$$
A_{per}\left(\frac{\cdot}{\varepsilon}\right)\left(e_i + \nabla \wper_{e_i}\left(\frac{\cdot}{\varepsilon}\right) \right) \rightharpoonup \int_Q A_{per}\left(e_i + \nabla \wper_{e_i} \right) = A_{per}^\star e_i \quad \text{weakly-$\star$ in $L^\infty$},
$$
where $A_{per}^\star$ is defined by~\eqref{def:A0star}. We thus deduce
from~\eqref{A0eps} that 
$$
{\cal A}^0_\varepsilon(\overline{u}_1^\varepsilon,\varphi) \rightarrow
-\sum\limits_{i=1}^d \int_\mathcal{D} \overline{u}_1^\star \left(\nabla
  \partial_i \varphi\right)^T A_{per}^\star e_i \quad \text{as }\varepsilon
\to 0. 
$$
Collecting~\eqref{def:varu1bar-veps},~\eqref{Aeps-exp},~\eqref{Leps-exp},~\eqref{A1eps-bound}, 
the above limit and~\eqref{convLeps}, we obtain that
$\overline{u}_1^\star$ satisfies 
$$
-\sum\limits_{i=1}^d \int_\mathcal{D} \overline{u}_1^\star \left(\nabla
  \partial_i \varphi\right)^T A_{per}^\star e_i
=
- \int_\mathcal{D} \left(\nabla
  \varphi\right)^T \overline{B} \nabla u_0^\star
$$
for any $\varphi \in \mathcal{C}^{\infty}_0(\mathcal{D})$.
This shows that $\overline{u}_1^\star$ solves~\eqref{PB:u1barstar} (which has a unique solution) and thus concludes the proof of Proposition~\ref{theo:u1bar0}.
\end{proof}

\bigskip

\begin{proof}[Proof of Proposition~\ref{theo:restubar1}]

The proof mostly goes by using the coercivity of $A_{per}$ and showing
that, in some appropriate norm, $-\hbox{div}\left[A_{per}(\nabla
  \overline{u}_1^\varepsilon - \nabla \overline{v}_1^\varepsilon)
\right]$ is small. However, a technical difficulty comes from the fact
that $\overline{v}_1^\varepsilon \notin H^1_0(\mathcal{D})$, as it does
not vanish on $\partial \mathcal{D}$. A preliminary step (Step 1 below)
thus consists in approximating $\overline{v}_1^\varepsilon$ by a
function (namely $g_1^\varepsilon$ defined by~\eqref{eq:def_geps} below) that is
equal to $\overline{v}_1^\varepsilon$ away from the
boundary $\partial \mathcal{D}$, but vanishes on the boundary. Step 2
consists in estimating the difference $\overline{u}_1^\varepsilon -
g_1^\varepsilon$. 

\medskip

\noindent
\textbf{Step 1: Truncation of $\overline{v}_1^\varepsilon$}

Let us define $\tau_\varepsilon \in \mathcal{C}^\infty_0(\mathcal{D})$
such that $0 \leq \tau_\varepsilon(x) \leq 1$ for all $x \in
\mathcal{D}$, $\tau_\varepsilon(x) = 1$ when $\text{dist}(\partial
\mathcal{D}, x) \geq \varepsilon$ and $\varepsilon \| \nabla
\tau_\varepsilon \|_{L^\infty(\mathcal{D})} \leq C$, where $C$ is a
constant independent of $\varepsilon$. We denote by
$\mathcal{D}_\varepsilon \subset \mathcal{D}$ the set of
$\RR^d$ defined by
$$
\mathcal{D}_\varepsilon := \left\{ x \in \mathcal{D} \text{ such that }
\text{dist}(\partial \mathcal{D}, x) \geq \varepsilon \right\}
$$
and we note that
$$
|\mathcal{D} \setminus \mathcal{D}_\varepsilon| \leq C \eps.
$$
Introduce now $g_1^\varepsilon\in H^1_0(\mathcal{D})$ defined by 
\begin{equation}
\label{eq:def_geps}
g_1^\varepsilon = \overline{u}_1^\star + \varepsilon \tau_\varepsilon 
\sum\limits_{i=1}^d 
\left(\wper_{e_i}\left(\frac{\cdot}{\varepsilon}\right) \partial_i
  \overline{u}_1^\star 
+ \psi_{e_i}\left(\frac{\cdot}{\varepsilon}\right) \partial_i u_0^\star \right),
\end{equation}
where $\overline{u}_1^\star$ is the solution to~\eqref{PB:u1barstar}, 
$u_0^\star$ is the solution to~\eqref{eq:homog-0},
and $\wper_{e_i}$ and $\psi_{e_i}$ are solutions (with $p=e_i$)
to~\eqref{PB:w0} and~\eqref{eq:psi_p}, respectively.
Note that $g_1^\varepsilon = \overline{v}_1^\varepsilon$ except in a
neighboorhood of
$\partial \mathcal{D}$. In the sequel, we estimate
$\overline{v}_1^\varepsilon-g_1^\varepsilon$. In the next Step, we
estimate $g_1^\varepsilon-\overline{u}_1^\varepsilon$. 

By definition, 
\begin{equation}
\nabla \overline{v}_1^\varepsilon - \nabla g_1^\varepsilon = 
e_0^\varepsilon - e_1^\varepsilon + \varepsilon e_2^\varepsilon, 
\label{eq1:restubar1}
\end{equation}
where
\begin{eqnarray*}
e_0^\varepsilon &=& (1-\tau_\varepsilon) \sum\limits_{i=1}^d
\left(\nabla \wper_{e_i}\left(\frac{\cdot}{\varepsilon}\right) \partial_i
  \overline{u}_1^\star + \nabla
  \psi_{e_i}\left(\frac{\cdot}{\varepsilon}\right) \partial_i u_0^\star
\right), 
\\
e_1^\varepsilon &=& \varepsilon \nabla \tau_\varepsilon
\sum\limits_{i=1}^d \left(\wper_{e_i}\left(\frac{\cdot}{\varepsilon}\right)
  \partial_i \overline{u}_1^\star +
  \psi_{e_i}\left(\frac{\cdot}{\varepsilon}\right) \partial_i u_0^\star
\right), 
\\
e_2^\varepsilon &=& (1-\tau_\varepsilon) \sum\limits_{i=1}^d \left(
  \wper_{e_i}\left(\frac{\cdot}{\varepsilon}\right) \nabla(\partial_i
  \overline{u}_1^\star) +  \psi_{e_i}\left(\frac{\cdot}{\varepsilon}\right)
  \nabla(\partial_i u_0^\star) \right). 
\end{eqnarray*}
We now bound from above successively the $L^2$ norm of
$e_2^\varepsilon$, $e_1^\varepsilon$ and $e_0^\varepsilon$. First, as
$u_0^\star \in W^{2,\infty}(\mathcal{D})$, $\overline{u}_1^\star \in
W^{2,\infty}(\mathcal{D})$, $\psi_{e_i} \in W^{1,\infty}(\RR^d)$, $\wper_{e_i} \in
W^{1,\infty}(\RR^d)$ and $0 \leq \tau_\eps \leq 1$, we have
\begin{equation}
\|e_2^\varepsilon\|^2_{L^2(\mathcal{D})}
\leq C, \quad \text{$C$ independent of $\eps$}. 
\label{eq2:restubar1}
\end{equation}
The same arguments lead to 
\begin{eqnarray}
\|e_1^\varepsilon\|^2_{L^2(\mathcal{D})} 
&=& 
\int_{\mathcal{D}}
\left[\sum\limits_{i=1}^d
  \left(\wper_{e_i}\left(\frac{\cdot}{\varepsilon}\right) \partial_i
    \overline{u}_1^\star + \psi_{e_i}\left(\frac{\cdot}{\varepsilon}\right)
    \partial_i u_0^\star \right) \right]^2 
\left| \eps \nabla \tau_\eps \right|^2
\nonumber \\
&\leq& C \ |\mathcal{D} \setminus \mathcal{D}_\varepsilon| 
\nonumber \\
 &\leq& C \varepsilon, 
\label{eq3:restubar1}
\end{eqnarray}
for a constant $C$ independent of $\varepsilon$. 
We next write
\begin{equation}
\|e_0^\varepsilon\|^2_{L^2(\mathcal{D})} 
\leq
|\mathcal{D} \setminus \mathcal{D}_\varepsilon| \left\| \
\sum\limits_{i=1}^d
\left(\nabla \wper_{e_i}\left(\frac{\cdot}{\varepsilon}\right) \partial_i
  \overline{u}_1^\star + \nabla
  \psi_{e_i}\left(\frac{\cdot}{\varepsilon}\right) \partial_i u_0^\star
\right)
\right\|^2_{L^\infty(\mathcal{D})} 
\leq C \varepsilon. 
\label{eq4:restubar1}
\end{equation}
Collecting~\eqref{eq1:restubar1},~\eqref{eq2:restubar1},~\eqref{eq3:restubar1}
and~\eqref{eq4:restubar1}, we have 
$$
\|\nabla \overline{v}_1^\varepsilon - 
\nabla g_1^\varepsilon\|^2_{L^2(\mathcal{D})} \leq C \varepsilon,
\quad
\text{$C$ independent of $\varepsilon$.}
$$
Observing that
$$
\|\overline{v}_1^\varepsilon - g_1^\varepsilon\|^2_{L^2(\mathcal{D})} 
\leq 
2d \varepsilon^2 \sum\limits_{i=1}^d 
\left(\| \wper_{e_i}\|_{L^\infty}^2
  \|\overline{u}_1^\star\|^2_{H^1(\mathcal{D})} +
  \|\psi_{e_i}\|_{L^\infty}^2 \|u_0^\star\|^2_{H^1(\mathcal{D})} \right) 
\leq C \varepsilon^2,
$$
we obtain that
\begin{equation}
\|\overline{v}_1^\varepsilon - g_1^\varepsilon\|_{H^1(\mathcal{D})} \leq
C \sqrt{\varepsilon},
\quad
\text{$C$ independent of $\varepsilon$.}
\label{eq11:restubar1}
\end{equation}

\medskip

\noindent
{\bf Step 2:}
We next turn to estimating $\overline{u}_1^\varepsilon -
g_1^\varepsilon$. Using that $A_{per}$ is coercive and the fact that
$\overline{u}_1^\varepsilon - g_1^\varepsilon \in H^1_0(\mathcal{D})$,
we have 
\begin{eqnarray}
\alpha \|\overline{u}_1^\varepsilon -
g_1^\varepsilon\|^2_{H^1(\mathcal{D})} 
&\leq& 
\int_\mathcal{D} \left(\nabla \overline{u}_1^\varepsilon - \nabla
  g_1^\varepsilon \right)^T
A_{per}\left(\frac{\cdot}{\varepsilon}\right) \left(\nabla
  \overline{u}_1^\varepsilon - \nabla g_1^\varepsilon \right) 
\nonumber \\
&\leq& 
\int_\mathcal{D} \left(\nabla \overline{u}_1^\varepsilon - \nabla
  g_1^\varepsilon \right)^T
A_{per}\left(\frac{\cdot}{\varepsilon}\right) \left(\nabla
  \overline{u}_1^\varepsilon - \nabla \overline{v}_1^\varepsilon \right)
+ \int_\mathcal{D} \left(\nabla \overline{u}_1^\varepsilon - \nabla
  g_1^\varepsilon \right)^T
A_{per}\left(\frac{\cdot}{\varepsilon}\right) \left(\nabla
  \overline{v}_1^\varepsilon - \nabla g_1^\varepsilon \right) 
\nonumber \\
&\leq& 
\|\overline{u}_1^\varepsilon - g_1^\varepsilon\|_{H^1(\mathcal{D})}
\left( \left\|\hbox{div}\left[
      A_{per}\left(\frac{\cdot}{\varepsilon}\right) \left(\nabla
        \overline{u}_1^\varepsilon - \nabla \overline{v}_1^\varepsilon
      \right)\right] \right\|_{H^{-1}(\mathcal{D})} +
  \|A_{per}\|_{L^\infty} \|\overline{v}_1^\varepsilon -
  g_1^\varepsilon\|_{H^1(\mathcal{D})} \right),
\label{eq5:restubar1}
\end{eqnarray}
where the constant $\alpha > 0$ only depends on the coercivity constant
of $A_{per}$ and the Poincar\'e constant of the domain ${\cal D}$. 
In the sequel, we bound from above $\dps \left\|\hbox{div}\left[
      A_{per}\left(\frac{\cdot}{\varepsilon}\right) \left(\nabla
        \overline{u}_1^\varepsilon - \nabla \overline{v}_1^\varepsilon
      \right)\right] \right\|_{H^{-1}(\mathcal{D})}$.

By definition of $\overline{v}_1^\varepsilon$, we have
$$
\overline{v}_1^\varepsilon = \widehat{v}_1^\varepsilon + 
\widetilde{v}_1^\varepsilon,
$$
with
$$
\widehat{v}_1^\varepsilon 
= 
\overline{u}_1^\star + \varepsilon \sum\limits_{i=1}^d
\wper_{e_i}\left(\frac{\cdot}{\varepsilon}\right) \partial_i
\overline{u}_1^\star
\quad \text{and} \quad
\widetilde{v}_1^\varepsilon = \varepsilon \sum\limits_{i=1}^d 
\psi_{e_i}\left(\frac{\cdot}{\varepsilon}\right) \partial_i u_0^\star.
$$
Using the equation~\eqref{PB:u1bareps} on $\overline{u}_1^\varepsilon$
and the relation~\eqref{PB:u1barstar} between $\overline{u}_1^\star$ and
$u_0^\star$, we compute 
\begin{eqnarray}
\hbox{div}\left[ A_{per}\left(\frac{\cdot}{\varepsilon}\right)
  \left(\nabla \overline{v}_1^\varepsilon-\nabla
    \overline{u}_1^\varepsilon \right)\right] 
&=&  
\hbox{div}\left[ A_{per}\left(\frac{\cdot}{\varepsilon}\right)\nabla
  \widehat{v}_1^\varepsilon - A^\star_{per} \nabla \overline{u}_1^\star
\right] +
\hbox{div}\left[A_{per}\left(\frac{\cdot}{\varepsilon}\right)\nabla
  \widetilde{v}_1^\varepsilon +
  B_{per}\left(\frac{\cdot}{\varepsilon}\right)\nabla u_0^\varepsilon -
  \overline{B}\nabla u_0^\star\right] 
\nonumber \\
&=& D_0 + D_1 + \varepsilon D_2, 
\label{eq6:restubar1}
\end{eqnarray}
where 
\begin{eqnarray*}
D_0 &=& \sum\limits_{i=1}^d
\hbox{div}\left(\left[A_{per}\left(\frac{\cdot}{\varepsilon}\right)\left(e_i
      + \nabla \wper_{e_i}\left(\frac{\cdot}{\varepsilon}\right)\right) -
    A_{per}^\star e_i \right] \partial_i \overline{u}^\star_1 \right), 
\\
D_1 &=& \sum\limits_{i=1}^d
\hbox{div}\left(\left[A_{per}\left(\frac{\cdot}{\varepsilon}\right)
    \nabla \psi_{e_i}\left(\frac{\cdot}{\varepsilon}\right) +
    B_{per}\left(\frac{\cdot}{\varepsilon}\right)\left(e_i + \nabla
      \wper_{e_i}\left(\frac{\cdot}{\varepsilon}\right)\right) -
    \overline{B} e_i \right] \partial_i u^\star_0 \right), 
\\
D_2 &=& \hbox{div} \left[ B_{per}\left(\frac{\cdot}{\varepsilon}\right)
\nabla \theta_0^\varepsilon \right]
+
\sum\limits_{i=1}^d
\hbox{div}\left[A_{per}\left(\frac{\cdot}{\varepsilon}\right) \left(
\wper_{e_i}\left(\frac{\cdot}{\varepsilon}\right) \nabla \partial_i
\overline{u}_1^\star
+
\psi_{e_i}\left(\frac{\cdot}{\varepsilon}\right) \nabla \partial_i u_0^\star 
\right) 
+
B_{per}\left(\frac{\cdot}{\varepsilon}\right)
\wper_{e_i}\left(\frac{\cdot}{\varepsilon}\right)
\nabla \partial_i u_0^\star \right].
\end{eqnarray*}
We now bound from above these three quantities.
As $A_{per}$ and $B_{per}$ are bounded, we see that
$$
\|D_2\|_{H^{-1}(\mathcal{D})} \leq 
C \|\theta_0^\varepsilon\|_{H^1(\mathcal{D})}
+
C \sum\limits_{i=1}^d 
\left[ \| \wper_{e_i}\|_{L^\infty} \|\overline{u}_1^\star\|_{H^2(\mathcal{D})} +
  \|\psi_{e_i}\|_{L^\infty} \|u_0^\star\|_{H^2(\mathcal{D})} +
  \| \wper_{e_i}\|_{L^\infty} \|u_0^\star\|_{H^2(\mathcal{D})} \right], 
$$
from which we infer, in view of~\eqref{restepsu0}, that
\begin{equation}
\varepsilon \|D_2\|_{H^{-1}(\mathcal{D})} \leq C \sqrt{\varepsilon},
\quad
\text{$C$ independent of $\varepsilon$.}
\label{eq7:restubar1}
\end{equation}
Let us now turn to $D_0$.
Consider, for any $1 \leq i \leq d$, the vector-valued function
$$
Z(y)=A_{per}(y)\left(e_i + \nabla \wper_{e_i}(y)\right) - A_{per}^\star e_i.
$$
We observe that $Z \in (L^2_{loc}(\RR^d))^d$ is divergence free,
$Q$-periodic and of vanishing mean. Since $\partial_i
\overline{u}_1^\star \in W^{1,\infty}(\mathcal{D})$, we can use
Lemma~\ref{lem3}, and we obtain
\begin{equation}
\|D_0\|_{H^{-1}(\mathcal{D})} \leq C \varepsilon,
\quad
\text{$C$ independent of $\varepsilon$.}
\label{eq8:restubar1}
\end{equation}
Turning now to $D_1$, we likewise consider, for any $1 \leq i \leq d$, 
the vector-valued function 
$$
\overline{Z}(y) = A_{per}(y) \nabla \psi_{e_i}(y) +  B_{per}(y)\left(e_i +
  \nabla \wper_{e_i}(y)\right) - \overline{B} e_i. 
$$
By construction, $\overline{Z} \in (L^2_{loc}(\RR^d))^d$ is $Q$-periodic
and divergence free, in 
view of the definition~\eqref{eq:psi_p} of $\psi_{e_i}$. In addition, the mean
of $\overline{Z}$ vanishes. Indeed, for any $1 \leq j \leq d$,
using~\eqref{def:overlineB},~\eqref{eq:psi_p}, the symmetry of
$A_{per}$ and~\eqref{PB:w0}, we have 
\begin{eqnarray*}
\int_Q \overline{Z} \cdot e_j 
&=& 
\int_Q e_j^T A_{per} \nabla \psi_{e_i} +  \int_Q e_j^T B_{per}\left(e_i +
\nabla \wper_{e_i}\right) - \int_Q (e_j+\nabla \wper_{e_j})^T B_{per}\left(e_i
  + \nabla \wper_{e_i}\right) 
\\
&=& 
\int_Q e_j^T A_{per} \nabla \psi_{e_i} -  \int_Q (\nabla \wper_{e_j})^T
B_{per}\left(e_i + \nabla \wper_{e_i}\right) 
\\
&=& \int_Q e_j^T A_{per} \nabla \psi_{e_i} +  \int_Q (\nabla \wper_{e_j})^T A_{per}
\nabla \psi_{e_i} 
\\
&=& \int_Q (\nabla \psi_{e_i})^T A_{per} (e_j+\nabla \wper_{e_j}) \\
&=& 0.
\end{eqnarray*}
Since $\partial_i u_0^\star \in W^{1,\infty}(\mathcal{D})$, we have that
$\overline{Z}$ and $\partial_i u_0^\star$ satisfy
the assumptions of Lemma~\ref{lem3}, hence
\begin{equation}
\|D_1\|_{H^{-1}(\mathcal{D})} \leq C \varepsilon,
\quad
\text{$C$ independent of $\varepsilon$.}
\label{eq9:restubar1}
\end{equation}

Collecting~\eqref{eq6:restubar1},~\eqref{eq7:restubar1},~\eqref{eq8:restubar1}
and~\eqref{eq9:restubar1}, we have 
\begin{equation}
\left\|\hbox{div}\left[ A_{per}\left(\frac{\cdot}{\varepsilon}\right)
    \left(\nabla \overline{u}_1^\varepsilon - \nabla
      \overline{v}_1^\varepsilon \right)\right]
\right\|_{H^{-1}(\mathcal{D})} \leq C \sqrt{\varepsilon},
\label{eq10:restubar1}
\end{equation}
where $C$ is a constant independent of $\varepsilon$. 
We now infer from~\eqref{eq11:restubar1},~\eqref{eq5:restubar1}
and~\eqref{eq10:restubar1} that 
$$
\alpha \|\overline{u}_1^\varepsilon -
g_1^\varepsilon\|^2_{H^1(\mathcal{D})} 
\leq
C \|\overline{u}_1^\varepsilon - g_1^\varepsilon\|_{H^1(\mathcal{D})}
\ \sqrt{\eps},
$$
hence
$$
\|\overline{u}_1^\varepsilon - g_1^\varepsilon\|_{H^1(\mathcal{D})} 
\leq
C \sqrt{\eps},
\quad \text{$C$ independent of $\eps$.}
$$

\medskip

\noindent
\textbf{Step 3: Conclusion}

Collecting the above bound with~\eqref{eq11:restubar1}, we deduce that
$$
\|\overline{u}_1^\varepsilon -
\overline{v}_1^\varepsilon\|_{H^1(\mathcal{D})} \leq C
\sqrt{\varepsilon},
\quad
\text{$C$ independent of $\varepsilon$.}
$$
We thus have
proved the claimed bound, and this concludes the
proof of Proposition~\ref{theo:restubar1}. 
\end{proof}

\subsection{Two-scale expansion of $\phi_k^\varepsilon$}

In this section, we prove Propositions~\ref{theo:phikeps}
and~\ref{theo:restu1}. 

\begin{proof}[Proof of Proposition~\ref{theo:phikeps}]

Introducing
$$
c_k^\varepsilon(x)
=
\mathbf{1}_{Q+k}\left(\frac{x}{\varepsilon}\right) 
B_{per}\left(\frac{x}{\varepsilon}\right) \nabla u_0^\varepsilon(x),
$$
the problem~\eqref{PB:phikeps} writes
$$
\left\{ 
\begin{array}{l l}
\dps
-\hbox{div} \left[ A_{per} \left(\frac{\cdot}{\varepsilon}\right) 
\nabla \phi_k^\varepsilon \right]
=
\hbox{div} \left[ c_k^\eps \right] 
& \text{ in $\mathcal{D}$}, 
\\
\phi_k^\varepsilon=0 & \text{ on $\partial\mathcal{D}$}.
\end{array}
\right.
$$
Multiplying this equation by $\phi_k^\varepsilon$, integrating over
${\cal D}$, and using the coercivity of $A_{per}$, we obtain that there
exists $C$ independent of $k$ and $\eps$ such that
\begin{equation}
\label{PB:phikeps_bis} 
\| \phi_k^\varepsilon \|_{H^1(\mathcal{D})} 
\leq
C \| c_k^\varepsilon \|_{L^2(\mathcal{D})}. 
\end{equation}
Let us now show that $c_k^\eps$ converges to 0 in $L^2({\cal D})$.
Using the expansion~\eqref{tse:u0eps}, we write
$$
\nabla u_0^\varepsilon = 
T^\eps + \varepsilon \nabla \theta_0^\varepsilon,
$$
with 
$$
T^\eps = 
\sum\limits_{i=1}^d \partial_i u_0^\star \ 
\left( e_i + \nabla \wper_{e_i}\left(\frac{\cdot}{\varepsilon}\right)
\right) 
+ \varepsilon \sum\limits_{i=1}^d \nabla (\partial_i u_0^\star) 
\wper_{e_i}\left(\frac{\cdot}{\varepsilon}\right). 
$$
Using the fact that $\wper_p \in W^{1,\infty}(\RR^d)$ and $u_0^\star \in
W^{2,\infty}(\mathcal{D})$, 
we see that $T^\eps$ is bounded in $L^\infty(\mathcal{D})$. We next
write 
\begin{eqnarray*}
\| c_k^\varepsilon \|^2_{L^2(\mathcal{D})} 
& \leq &
\|B_{per}\|^2_{L^\infty(\RR^d)}
\int_{\eps(Q+k)} \left| \nabla u_0^\varepsilon \right|^2
\\
& \leq &
C \eps^d + C \int_{\eps(Q+k)} \left| \eps \nabla \theta_0^\varepsilon \right|^2
\\
& \leq &
C \eps^d + C \| \eps \theta_0^\varepsilon \|^2_{H^1({\cal D})}.
\end{eqnarray*}
Using the bound~\eqref{restepsu0}, we deduce that $c_k^\eps$ converges
to 0 in $L^2({\cal D})$. In view of~\eqref{PB:phikeps_bis}, this implies
that $\phi_k^\eps$ converges to 0 in $H^1_0({\cal D})$. This concludes
the proof.
\end{proof}

\bigskip

\begin{proof}[Proof of Proposition~\ref{theo:restu1}]

As in the proof of Proposition~\ref{theo:restubar1}, the proof falls in
two steps. We first truncate $\overline{v}_k^\varepsilon$ in a function $\widetilde{v}_k^\varepsilon$ (defined by~\eqref{eq:def_vtildeeps}
below) that vanishes on $\partial \mathcal{D}$. We next estimate the difference between
$\widetilde{v}_k^\varepsilon$ and $\phi_k^\varepsilon$.

\medskip

\noindent
{\bf Step 1: Truncation of $\overline{v}_k^\varepsilon$}

Let us define $\tau_\varepsilon \in \mathcal{C}^\infty_0(\mathcal{D})$
such that $0 \leq \tau_\varepsilon(x) \leq 1$ for all $x \in
\mathcal{D}$, $\tau_\varepsilon(x) = 1$ when $\text{dist}(\partial
\mathcal{D}, x) \geq \varepsilon$ and $\varepsilon \| \nabla
\tau_\varepsilon \|_{L^\infty({\cal D})} \leq C$, where $C$ is a constant
independent of $\varepsilon$. We introduce
\begin{eqnarray*}
\mathcal{D}_\varepsilon &:=& \left\{ x \in \mathcal{D} \text{ such that }
\text{dist}(\partial \mathcal{D}, x) \geq \varepsilon \right\},
\quad
\left| {\cal D} \setminus \mathcal{D}_\varepsilon \right| \sim \eps,
\\
J_\varepsilon &:=& \left\{ k \in \ZZ^d \text{ such that }
  \varepsilon(Q+k) \cap \mathcal{D} \setminus \mathcal{D}_\varepsilon
  \ne \emptyset  \right\}, 
\quad \text{Card}(J_\varepsilon) \sim \varepsilon^{1-d},
\end{eqnarray*}
and the function $\widetilde{v}_k^\varepsilon \in H^1_0(\mathcal{D})$
defined by 
\begin{equation}
\label{eq:def_vtildeeps}
\widetilde{v}_k^\varepsilon = \varepsilon \tau_\varepsilon
\sum\limits_{i=1}^d \chi_{e_i}\left(\frac{\cdot}{\varepsilon}-k\right)
\partial_i u_0^\star,
\end{equation}
where $u_0^\star$ is solution to~\eqref{eq:homog-0} and $\chi_{e_i}$ is
solution to~\eqref{PB:chip}. Note that $\widetilde{v}_k^\varepsilon =
\overline{v}_k^\varepsilon$ except in the neighboorhood of the boundary of
$\mathcal{D}$. In the sequel, we estimate $\sum\limits_{k \in
  I_\varepsilon} \|\overline{v}_k^\varepsilon -
\widetilde{v}_k^\varepsilon\|^2_{H^1(\mathcal{D})}$, and, in the next
Step, we estimate $\sum\limits_{k \in I_\varepsilon}
\|\phi_k^\varepsilon -
\widetilde{v}_k^\varepsilon\|^2_{H^1(\mathcal{D})}$, where, we recall
(see~\eqref{def:Ieps}), 
$$
I_\varepsilon = \left\{ k \in \ZZ^d \text{ such that } \varepsilon(Q+k)
  \cap \mathcal{D} \ne \emptyset \right\}, 
\quad 
\text{Card}(I_\varepsilon) \sim \varepsilon^{-d}.
$$
Recall also that, whenever $k \notin I_\varepsilon$, we have
$\phi_k^\varepsilon \equiv 0$. 

By definition,
\begin{equation}
\nabla \overline{v}_k^\varepsilon - \nabla \widetilde{v}_k^\varepsilon 
= 
e_0^{k,\varepsilon} - e_1^{k,\varepsilon} + e_2^{k,\varepsilon}, 
\label{eq1:restu1}
\end{equation}
where
\begin{eqnarray*}
e_0^{k,\varepsilon} &=& (1-\tau_\varepsilon) \sum\limits_{i=1}^d \nabla
\chi_{e_i}\left(\frac{\cdot}{\varepsilon}-k\right) \partial_i u_0^\star, 
\\
e_1^{k,\varepsilon} &=& \varepsilon \nabla \tau_\varepsilon
\sum\limits_{i=1}^d \chi_{e_i}\left(\frac{\cdot}{\varepsilon}-k\right)
\partial_i u_0^\star, 
\\
e_2^{k,\varepsilon} &=& \varepsilon (1-\tau_\varepsilon)
\sum\limits_{i=1}^d \chi_{e_i}\left(\frac{\cdot}{\varepsilon} - k\right)
\nabla(\partial_i u_0^\star). 
\end{eqnarray*}
We now bound from above successively the $L^2$ norm of
$e_2^{k,\varepsilon}$, $e_1^{k,\varepsilon}$ and
$e_0^{k,\varepsilon}$. To this aim, the following computation will be
useful: for any $1 \leq i \leq d$, we have
$$
\sum\limits_{k \in I_\varepsilon} \int_{\mathcal{D} \setminus
  \mathcal{D}_\varepsilon}
\chi^2_{e_i}\left(\frac{\cdot}{\varepsilon}-k\right)
\leq
\sum\limits_{k \in
  I_\varepsilon} \sum\limits_{j \in J_\varepsilon} \varepsilon^d
\int_{Q+j} \chi^2_{e_i}\left(\cdot -k\right)
\leq
\sum\limits_{j \in
  J_\varepsilon} \varepsilon^d \sum\limits_{k \in I_\varepsilon}
\int_{Q+j-k} \chi^2_{e_i}.
$$
There exists $\rho$ such that 
$$
\forall \eps, \ \forall j \in J_\varepsilon, 
\ \forall k \in I_\varepsilon,
\quad 
Q+j-k \subset B(0,\rho/\eps).
$$
We thus obtain that
\begin{equation}
\sum\limits_{k \in I_\varepsilon} \int_{\mathcal{D} \setminus
  \mathcal{D}_\varepsilon}
\chi^2_{e_i}\left(\frac{\cdot}{\varepsilon}-k\right)
\leq
\sum\limits_{j \in J_\varepsilon} \varepsilon^d
\int_{B(0,\rho/\varepsilon)} \chi^2_{e_i}
\leq 
\varepsilon
\int_{B(0,\rho/\varepsilon)} \chi^2_{e_i}.
\label{eq:interm1}
\end{equation}
We next infer from~\eqref{res-b:lem4} that
\begin{equation}
\int_{B(0,\rho/\varepsilon)} \chi^2_{e_i} 
\leq 
\int_{B(0,\rho/\varepsilon)} \frac{C}{(1+|y|^{d-1})^2}dy 
\leq 
C + C \int_1^{\rho/\varepsilon} \frac{1}{r^{d-1}}dr 
\leq 
C R_{d,\eps},
\label{eq3-1:restu1}
\end{equation}
where $C$ is a constant independent of $\varepsilon$ and
\begin{equation}
\label{eq:def_Rdeps}
R_{d,\eps} :=
\left\{
\begin{array}{c}
1+\ln(1/\varepsilon) \text{ if $d=2$},
\\
1 \text{ if $d>2$}.
\end{array}
\right.
\end{equation}
Collecting~\eqref{eq:interm1} and~\eqref{eq3-1:restu1},
we deduce that
\begin{equation}
\sum\limits_{k \in I_\varepsilon} \int_{\mathcal{D} \setminus
  \mathcal{D}_\varepsilon}
\chi^2_{e_i}\left(\frac{\cdot}{\varepsilon}-k\right)
\leq 
C \eps R_{d,\eps}.
\label{eq:interm2}
\end{equation}
We now bound $e_2^{k,\varepsilon}$. 
As $u_0^\star \in
W^{2,\infty}(\mathcal{D})$, and using~\eqref{eq:interm2}, we have
\begin{eqnarray}
\sum\limits_{k \in I_\varepsilon}
\|e_2^{k,\varepsilon}\|^2_{L^2(\mathcal{D})}
&=& 
\sum\limits_{k \in I_\varepsilon} \varepsilon^2 \int_\mathcal{D}
\left[(1-\tau_\varepsilon) \sum\limits_{i=1}^d
  \chi_{e_i}\left(\frac{\cdot}{\varepsilon} - k\right) \nabla(\partial_i
  u_0^\star)\right]^2 
\nonumber \\
 &\leq& C \varepsilon^2 \|\nabla^2 u_0^\star\|^2_{L^\infty} 
\sum\limits_{k \in I_\varepsilon} 
\sum\limits_{i=1}^d \int_{\mathcal{D} \setminus {\cal D}_\eps}
\chi^2_{e_i}\left(\frac{\cdot}{\varepsilon} - k\right)
\nonumber \\
&\leq& C \eps^3 R_{d,\eps}.
\label{eq2:restu1}
\end{eqnarray}
We next turn to $e_1^{k,\varepsilon}$. The same arguments and the fact that
$\varepsilon \|\nabla \tau_\varepsilon \|_{L^\infty} \leq C$ lead to 
\begin{eqnarray}
\sum\limits_{k \in I_\varepsilon} \|e_1^{k,\varepsilon}\|^2_{L^2(\mathcal{D})} 
&\leq& 
\| \varepsilon \nabla \tau_\varepsilon \|^2_{L^\infty}
\sum\limits_{k \in I_\varepsilon} \int_{\mathcal{D} \setminus
  \mathcal{D}_\varepsilon}
\left[\sum\limits_{i=1}^d \chi_{e_i}\left(\frac{\cdot}{\varepsilon}-k\right)
  \partial_i u_0^\star \right]^2 
\nonumber \\
&\leq& 
C \sum\limits_{i=1}^d \sum\limits_{k \in
  I_\varepsilon} \int_{\mathcal{D} \setminus \mathcal{D}_\varepsilon}
\chi^2_{e_i}\left(\frac{\cdot}{\varepsilon}-k\right) 
\nonumber \\
&\leq& 
C \varepsilon R_{d,\eps},
\label{eq3:restu1}
\end{eqnarray}
where we have again used~\eqref{eq:interm2}. 
Turning to $e_0^{k,\varepsilon}$, we have, using $\nabla \chi_{e_i} \in
\left( L^2(\RR^d) \right)^d$, 
\begin{eqnarray}
\sum\limits_{k \in I_\varepsilon} \|e_0^{k,\varepsilon}\|^2_{L^2(\mathcal{D})} 
&\leq& 
C \| \nabla u_0^\star \|^2_{L^\infty} \sum\limits_{i=1}^d 
\sum\limits_{k \in I_\varepsilon}\int_{\mathcal{D} \setminus
  \mathcal{D}_\varepsilon}\left| \nabla
  \chi_{e_i}\left(\frac{\cdot}{\varepsilon}-k\right)\right|^2 
\nonumber \\
&\leq& 
C \sum\limits_{i=1}^d 
\sum\limits_{j \in J_\varepsilon} \varepsilon^d \sum\limits_{k \in I_\varepsilon}
\int_{Q+j-k} \left| \nabla \chi_{e_i} \right|^2 
\nonumber \\
&\leq&  
C \sum\limits_{i=1}^d \sum\limits_{j \in
  J_\varepsilon} \varepsilon^d \|\nabla \chi_{e_i}\|^2_{L^2(\RR^d)} 
\nonumber \\
&\leq& C \varepsilon.
\label{eq4:restu1}
\end{eqnarray}
Collecting~\eqref{eq1:restu1},~\eqref{eq2:restu1},~\eqref{eq3:restu1}
and~\eqref{eq4:restu1}, we deduce that
$$
\sum\limits_{k \in I_\varepsilon}
\|\nabla \overline{v}_k^\eps - \nabla \widetilde{v}_k^\eps
\|^2_{L^2(\mathcal{D})}
\leq 
C \left( \eps + \eps R_{d,\eps} + \eps^3 R_{d,\eps} \right),
$$
where $C$ is a constant independent of $\varepsilon$. Observing that
$$
\sum\limits_{k \in I_\varepsilon}
\|\overline{v}_k^\varepsilon - \widetilde{v}_k^\varepsilon
\|^2_{L^2(\mathcal{D})} 
\leq 
C \varepsilon^2 \|\nabla u_0^\star\|^2_{L^\infty(\mathcal{D})} 
\sum\limits_{k \in I_\varepsilon} \sum\limits_{i=1}^d 
\int_{\mathcal{D} \setminus
  \mathcal{D}_\varepsilon}
\chi^2_{e_i}\left(\frac{\cdot}{\varepsilon}-k\right)
\leq 
C \varepsilon^3 R_{d,\eps},
$$
we obtain that
\begin{equation}
\sum\limits_{k \in I_\varepsilon}
\|\overline{v}_k^\varepsilon - \widetilde{v}_k^\varepsilon
\|^2_{H^1(\mathcal{D})} 
\leq C \left( \eps + \eps R_{d,\eps} + \eps^3 R_{d,\eps} \right)
\leq
\left\{
\begin{array}{c}
C \eps \left[ 1+\ln(1/\varepsilon) \right] \text{ if $d=2$},
\\
C \eps \text{ if $d>2$},
\end{array}
\right.
\label{eq11:restu1}
\end{equation}
where $C$ is a constant independent of $\varepsilon$. 

\medskip

\noindent
{\bf Step 2:}
We next turn to estimating $\sum\limits_{k \in I_\varepsilon}
\|\phi_k^\varepsilon -
\widetilde{v}_k^\varepsilon\|^2_{H^1(\mathcal{D})}$. Using that
$A_{per}$ is coercive and
the fact that $\phi_k^\varepsilon - \widetilde{v}_k^\varepsilon \in
H^1_0(\mathcal{D})$, we have
\begin{equation}
\alpha \|\phi_k^\varepsilon -
\widetilde{v}_k^\varepsilon\|^2_{H^1(\mathcal{D})} 
\leq
\int_\mathcal{D} \left(\nabla \phi_k^\varepsilon - \nabla
  \widetilde{v}_k^\varepsilon \right)^T
A_{per}\left(\frac{\cdot}{\varepsilon}\right) \left(\nabla
  \phi_k^\varepsilon - \nabla \widetilde{v}_k^\varepsilon \right) 
=
D_0^{k,\varepsilon} + D_1^{k,\varepsilon}, 
\label{eq5:restu1}
\end{equation}
where the constant $\alpha > 0$ only depends on the coercivity constant
of $A_{per}$ and the Poincar\'e constant of the domain ${\cal D}$, and where
\begin{eqnarray*}
D_0^{k,\varepsilon} &=& 
\int_\mathcal{D} \left(\nabla \phi_k^\varepsilon - \nabla
  \widetilde{v}_k^\varepsilon \right)^T
A_{per}\left(\frac{\cdot}{\varepsilon}\right) \left(\nabla
  \phi_k^\varepsilon - \nabla \overline{v}_k^\varepsilon \right), 
\\
D_1^{k,\varepsilon} &=& 
\int_\mathcal{D} \left(\nabla \phi_k^\varepsilon - \nabla
  \widetilde{v}_k^\varepsilon \right)^T
A_{per}\left(\frac{\cdot}{\varepsilon}\right) \left(\nabla
  \overline{v}_k^\varepsilon - \nabla \widetilde{v}_k^\varepsilon
\right).
\end{eqnarray*}
We successively bound $D_0^{k,\varepsilon}$ and $D_1^{k,\varepsilon}$
from above. We begin with $D_0^{k,\varepsilon}$. Observe that, in view
of~\eqref{PB:phikeps},
\begin{multline*}
-\hbox{div}\left[A_{per}\left(\frac{\cdot}{\varepsilon}\right)\left(\nabla
    \phi_k^\varepsilon - \nabla \overline{v}_k^\varepsilon
  \right)\right] 
= 
\hbox{div}\left[\mathbf{1}_{Q+k}\left(\frac{\cdot}{\varepsilon}\right)
B_{per}\left(\frac{\cdot}{\varepsilon}\right)\left(\nabla
  u_0^\varepsilon - \sum\limits_{p=1}^d \left(e_p+ \nabla
    \wper_{e_p}\left(\frac{\cdot}{\varepsilon}\right)\right)\partial_p
  u_0^\star \right)\right] 
\\
+ \sum\limits_{p=1}^d
\hbox{div}\left[Z_k\left(\frac{\cdot}{\varepsilon}\right)\partial_p
  u_0^\star\right] + \varepsilon
\hbox{div}\left[A_{per}\left(\frac{\cdot}{\varepsilon}\right)
  \nabla(\partial_p u_0^\star)
  \chi_{e_p}\left(\frac{\cdot}{\varepsilon}-k\right) \right],
\end{multline*}
where the vector-valued function $Z_k$ is defined by
$$
Z_k(y) = \mathbf{1}_{Q+k}(y) B_{per}(y) \left(e_p + \nabla \wper_{e_p}(y)\right) +
A_{per}(y)\nabla \chi_{e_p}(y-k). 
$$
Note that, in view of~\eqref{PB:chip}, $Z_k$ is a divergence free
vector, hence 
$\dps \hbox{div}\left[Z_k\left(\frac{\cdot}{\varepsilon}\right)\partial_p
u_0^\star\right]
=
Z_k\left(\frac{\cdot}{\varepsilon}\right) \cdot \nabla \partial_p
u_0^\star$. We can thus rewrite $D_0^{k,\varepsilon}$ as
\begin{equation}
D_0^{k,\varepsilon} = D_{00}^{k,\varepsilon} + D_{01}^{k,\varepsilon} +
D_{02}^{k,\varepsilon}, 
\label{exp-B0keps}
\end{equation}
where
\begin{eqnarray*}
D_{00}^{k,\varepsilon} &=& - \int_{\mathcal{D}} \left(\nabla
  \phi_k^\varepsilon - \nabla \widetilde{v}_k^\varepsilon
\right)^T\mathbf{1}_{Q+k}\left(\frac{\cdot}{\varepsilon}\right)
B_{per}\left(\frac{\cdot}{\varepsilon}\right)\left(\nabla
  u_0^\varepsilon - \sum\limits_{p=1}^d \left(e_p+ \nabla
    \wper_{e_p}\left(\frac{\cdot}{\varepsilon}\right)\right)\partial_p u_0^\star
\right), 
\\
D_{01}^{k,\varepsilon} &=& \sum\limits_{p=1}^d \int_{\mathcal{D}}
\left(\phi_k^\varepsilon - \widetilde{v}_k^\varepsilon \right) \
Z_k\left(\frac{\cdot}{\varepsilon}\right) \cdot \nabla(\partial_p
u_0^\star),
\\
D_{02}^{k,\varepsilon} &=& - \varepsilon \sum\limits_{p=1}^d \int_{\mathcal{D}}
\left(\nabla \phi_k^\varepsilon - \nabla \widetilde{v}_k^\varepsilon
\right)^TA_{per}\left(\frac{\cdot}{\varepsilon}\right) \nabla(\partial_p
u_0^\star) \ \chi_{e_p}\left(\frac{\cdot}{\varepsilon}-k\right). 
\end{eqnarray*}
We successively bound these three quantities.
Since $\chi_{e_p} \in L^\infty(\RR^d)$ (see~\eqref{res-b:lem4}) and
$u_0^\star \in W^{1,\infty}(\mathcal{D})$, we have
$\|\widetilde{v}_k^\varepsilon\|_{L^\infty(\mathcal{D})} \leq C
\eps$. We also have that 
$\|\phi_k^\varepsilon\|_{L^\infty(\mathcal{D})} \leq C \varepsilon$, in
view of~\eqref{eq:phi_estim2} (recall indeed that $u_0^\star \in
W^{2,\infty}(\mathcal{D})$ implies that $f \in L^\infty(\mathcal{D})$,
in view of~\eqref{eq:homog-0}; assumptions of
Lemma~\ref{lem:decomposition} are thus satisfied). 
Using that $u_0^\star \in 
W^{2,\infty}(\mathcal{D})$, we now bound from above
$D_{01}^{k,\varepsilon}$: 
\begin{eqnarray*}
|D_{01}^{k,\varepsilon}| &\leq& 
\sum_{p=1}^d 
\|\phi_k^\varepsilon-\widetilde{v}_k^\varepsilon\|_{L^\infty(\mathcal{D})}
\|\nabla^2 u_0^\star\|_{L^\infty(\mathcal{D})} 
\left\|Z_k\left(\frac{\cdot}{\varepsilon}\right)\right\|_{L^1(\mathcal{D})} 
\\
&\leq& 
C \varepsilon 
\sum\limits_{p=1}^d \left[\|B_{per}\|_{L^\infty} \int_{\mathcal{D}}
  \mathbf{1}_{Q+k}\left(\frac{\cdot}{\varepsilon}\right) \left|e_p +
    \nabla \wper_{e_p}\left(\frac{\cdot}{\varepsilon}\right)\right| +
  \|A_{per}\|_{L^\infty}  \int_\mathcal{D} \left| \nabla
  \chi_{e_p}\left(\frac{\cdot}{\varepsilon}-k\right) \right| \right] 
\\
&\leq& 
C \varepsilon 
\sum\limits_{p=1}^d \left[ \varepsilon^d \|e_p + \nabla
  \wper_{e_p} \|_{L^2(Q)} 
+ \varepsilon^d 
\int_{\mathcal{D}/\varepsilon-k} |\nabla \chi_{e_p}|\right] 
\\
&\leq& 
C \varepsilon^{d+1} 
\sum\limits_{p=1}^d \left[ 1 +
\left( \int_{B(0,1)} |\nabla \chi_{e_p}| + 
\int_{B(0,\rho/\varepsilon)\setminus
  B(0,1)} |\nabla \chi_{e_p}|\right)\right].
\end{eqnarray*}
Using that $\nabla \chi_{e_p} \in (L^2(\RR^d))^d$ (see Lemma~\ref{lem4}) and the
bound~\eqref{res:lem4}, we deduce that
\begin{eqnarray*}
|D_{01}^{k,\varepsilon}|
&\leq& C \varepsilon^{d+1} 
\sum\limits_{p=1}^d \left[1 + \left( \|\nabla \chi_{e_p}\|_{L^2(\RR^d)}+
C \int_1^{1/\varepsilon} \frac{1}{r}dr\right)\right] \\
&\leq& C \varepsilon^{d+1} \left[ 1 + \ln(1/\varepsilon) \right],
\end{eqnarray*}
where $C$ is a constant independent of $\varepsilon$. We thus get
\begin{equation}
\sum\limits_{k \in I_\varepsilon} |D_{01}^{k,\varepsilon}| \leq 
C \varepsilon \left[ 1 + \ln(1/\varepsilon) \right]. 
\label{C1keps-bound}
\end{equation}
We now turn to $D_{02}^{k,\varepsilon}$. Using~\eqref{eq3-1:restu1}, we
observe that, for any $k \in I_\eps$,
$$
\left\|
\chi_{e_p}\left(\frac{\cdot}{\varepsilon}-k\right)\right\|_{L^2(\mathcal{D})}^2 
= 
\int_\mathcal{D} \left| \chi_{e_p}\left(\frac{\cdot}{\varepsilon}-k\right)
\right|^2 
=
\varepsilon^d \int_{\mathcal{D}/\varepsilon-k} |\chi_{e_p}|^2 
\leq
\varepsilon^d \int_{B(0,\bar \rho/\varepsilon)} |\chi_{e_p}|^2
\leq
C \varepsilon^d R_{d,\eps}.
$$
We thus can bound from above $D_{02}^{k,\varepsilon}$, using that $u_0^\star \in
W^{2,\infty}(\mathcal{D})$:
\begin{eqnarray}
\sum\limits_{k \in I_\varepsilon}|D_{02}^{k,\varepsilon}| 
&\leq& 
\varepsilon \|\nabla^2 u_0^\star\|_{L^\infty(\mathcal{D})} \|A_{per}\|_{L^\infty}
\sum\limits_{p=1}^d \sum\limits_{k \in I_\varepsilon}\|\nabla
\phi_k^\varepsilon - \nabla
\widetilde{v}_k^\varepsilon\|_{L^2(\mathcal{D})} \
\left\|\chi_{e_p}\left(\frac{\cdot}{\varepsilon}-k\right)\right\|_{L^2(\mathcal{D})} 
\nonumber \\
&\leq& C \varepsilon \sqrt{\sum\limits_{k \in I_\varepsilon} \|\nabla
  \phi_k^\varepsilon - \nabla
  \widetilde{v}_k^\varepsilon\|_{L^2(\mathcal{D})}^2}
\ 
\sqrt{\sum\limits_{p=1}^d \sum\limits_{k \in I_\varepsilon} \left\| 
\chi_{e_p}\left(\frac{\cdot}{\varepsilon}-k\right) \right\|_{L^2(\mathcal{D})}^2}
\nonumber \\
&\leq& C \varepsilon \sqrt{\sum\limits_{k \in I_\varepsilon} \|\nabla
  \phi_k^\varepsilon - \nabla
  \widetilde{v}_k^\varepsilon\|_{L^2(\mathcal{D})}^2} 
\sqrt{R_{d,\eps}}
\label{C2keps-bound}
\end{eqnarray}
where $C$ is a constant independent of $\varepsilon$. We next turn to
$D_{00}^{k,\varepsilon}$. Using the bound~\eqref{restepsu0} on the two
scale expansion of $u_0^\varepsilon$, we have
\begin{eqnarray}
\sum\limits_{k \in I_\varepsilon} |D_{00}^{k,\varepsilon}| &\leq&  
\|B_{per}\|_{L^\infty} \sum\limits_{k \in I_\varepsilon} \|\nabla
\phi_k^\varepsilon - \nabla
\widetilde{v}_k^\varepsilon\|_{L^2(\mathcal{D})} \left\|\nabla
  u_0^\varepsilon - \sum\limits_{p=1}^d \left(e_p+ \nabla
    \wper_{e_p} \left(\frac{\cdot}{\varepsilon}\right)\right)\partial_p
  u_0^\star\right\|_{L^2(\varepsilon(Q+k))} 
\nonumber \\
&\leq& C \sqrt{\sum\limits_{k \in I_\varepsilon}
  \|\nabla \phi_k^\varepsilon - \nabla
  \widetilde{v}_k^\varepsilon\|_{L^2(\mathcal{D})}^2}
\sqrt{\sum\limits_{k \in I_\varepsilon} \left\|\nabla u_0^\varepsilon -
    \sum\limits_{p=1}^d \left(e_p+ \nabla
      \wper_{e_p} \left(\frac{\cdot}{\varepsilon}\right)\right)\partial_p
    u_0^\star\right\|_{L^2(\varepsilon(Q+k))}^2 } 
\nonumber \\
&\leq& C \sqrt{\sum\limits_{k \in I_\varepsilon}
  \|\nabla \phi_k^\varepsilon - \nabla
  \widetilde{v}_k^\varepsilon\|_{L^2(\mathcal{D})}^2} \left\|\nabla
  u_0^\varepsilon - \sum\limits_{p=1}^d \left(e_p+ \nabla
    \wper_{e_p} \left(\frac{\cdot}{\varepsilon}\right)\right)\partial_p
  u_0^\star\right\|_{L^2(\mathcal{D})} 
\nonumber \\
&\leq& C \sqrt{\varepsilon} \sqrt{\sum\limits_{k \in I_\varepsilon}
  \|\nabla \phi_k^\varepsilon - \nabla
  \widetilde{v}_k^\varepsilon\|_{L^2(\mathcal{D})}^2}.
\label{C0keps-bound}
\end{eqnarray}
Collecting~\eqref{exp-B0keps},~\eqref{C1keps-bound},~\eqref{C2keps-bound}
and~\eqref{C0keps-bound}, we obtain that 
\begin{equation}
\sum\limits_{k \in I_\varepsilon} |D_0^{k,\varepsilon}| \leq C \left(
  \left( \sqrt{\varepsilon} + \varepsilon \sqrt{R_{d,\eps}} \right)
  \sqrt{\sum\limits_{k \in I_\varepsilon} \|\nabla \phi_k^\varepsilon -
    \nabla \widetilde{v}_k^\varepsilon\|_{L^2(\mathcal{D})}^2} +
  \varepsilon \ln(1/\varepsilon) \right). 
\label{B0keps-bound}
\end{equation}
We now turn to $D_1^{k,\varepsilon}$. Using~\eqref{eq11:restu1}, we have
\begin{eqnarray}
\sum\limits_{k \in I_\varepsilon} |D_1^{k,\varepsilon}| 
&\leq& 
C \sqrt{\sum\limits_{k \in I_\varepsilon} \|\nabla \phi_k^\varepsilon - \nabla \widetilde{v}_k^\varepsilon\|_{L^2(\mathcal{D})}^2}\sqrt{\sum\limits_{k \in I_\varepsilon} \|\nabla \overline{v}_k^\varepsilon - \nabla \widetilde{v}_k^\varepsilon\|_{L^2(\mathcal{D})}^2} \nonumber \\
&\leq& C \sqrt{\sum\limits_{k \in I_\varepsilon} \|\nabla
  \phi_k^\varepsilon - \nabla
  \widetilde{v}_k^\varepsilon\|_{L^2(\mathcal{D})}^2} \sqrt{\varepsilon
  R_{d,\eps}}. 
\label{B1keps-bound}
\end{eqnarray}
Collecting~\eqref{eq5:restu1},~\eqref{B0keps-bound} and~\eqref{B1keps-bound}, we obtain
$$
\alpha \sum\limits_{k \in I_\varepsilon}\|\phi_k^\varepsilon -
\widetilde{v}_k^\varepsilon\|^2_{H^1(\mathcal{D})} 
\leq 
C \left( \varepsilon \ln(1/\varepsilon) + 
\left( \sqrt{\varepsilon R_{d,\eps}} + \varepsilon \sqrt{R_{d,\eps}} \right)
\sqrt{\sum\limits_{k \in I_\varepsilon} \|\nabla \phi_k^\varepsilon - \nabla \widetilde{v}_k^\varepsilon\|_{L^2(\mathcal{D})}^2} \right)
$$
with, in view of~\eqref{eq:def_Rdeps}, $R_{d,\eps} = 1+ \ln(1/\eps)$ if
$d=2$, and $R_{d,\eps} = 1$ if $d>2$.
This implies that
$$
\sum\limits_{k \in I_\varepsilon}\|\phi_k^\varepsilon -
\widetilde{v}_k^\varepsilon\|^2_{H^1(\mathcal{D})} \leq C \varepsilon
\ln(1/\varepsilon),
\quad
\text{$C$ independent of $\varepsilon$.}
$$
Collecting this bound with~\eqref{eq11:restu1}, we obtain the claimed bound. 
This concludes the
proof of Proposition~\ref{theo:restu1}. 
\end{proof}

\bigskip

\noindent
\textbf{Acknowledgements:} Support from EOARD under Grant
FA8655-10-C-4002 is gratefully acknowledged. We also thank Xavier Blanc
for useful discussions.

\end{document}